\title{Learning the smoothness of noisy curves with application to online curve estimation}
\author{%
Steven Golovkine\thanks{Groupe Renault \& CREST - UMR 9194, Rennes, France, \href{mailto:steven.golovkine@ensai.fr}{steven.golovkine@ensai.fr}}
\and
Nicolas Klutchnikoff\thanks{Univ Rennes, CNRS, IRMAR - UMR 6625, F-35000 Rennes, France, \href{mailto:nicolas.klutchnikoff@univ-rennes2.fr}{nicolas.klutchnikoff@univ-rennes2.fr}}
\and
Valentin Patilea\thanks{Ensai, CREST - UMR 9194, Rennes, France, \href{mailto:valentin.patilea@ensai.fr}{valentin.patilea@ensai.fr}}
}
\date{\today}
\begin{document}

\maketitle

\begin{abstract}
Combining information both within and across trajectories, we propose a
simple estimator for the local regularity of the trajectories of a stochastic process. Independent trajectories are measured with errors at randomly sampled time points. The proposed approach is model-free and applies to a large class of stochastic processes. Non-asymptotic bounds for the concentration of the estimator are derived. Given the estimate of the local regularity, we build a nearly optimal local polynomial smoother from the curves from a new, possibly very large sample of noisy trajectories. We derive non-asymptotic pointwise risk bounds uniformly over the new set of curves. Our estimates perform well in simulations, in both cases of   differentiable or non-differentiable trajectories. Real data sets illustrate the effectiveness of the new approaches.
\end{abstract}

\tableofcontents

\section{Introduction}\label{sec:introduction}

More and more phenomena in modern society produce observation entities in the form of a sequence of measurements recorded intermittently at several discrete points in time. Very often the measurements are noisy and the observation points in time are neither regularly distributed nor the same across the entities. Functional data analysis (FDA) considers such data as being values on the trajectories of a stochastic process, recorded with some error,  at discrete random times. One of the main purposes of the FDA is to recover the trajectories, also called curves or functions,  at any point in time. See, \emph{e.g.}, \cite{ramsay_functional_2005,horvath_inference_2012,wang_review_2015,zhang_sparse_2016} for some recent references. Whatever the approach for recovering the curve is, in the existing literature it is usually assumed that, for each curve, a certain number of derivatives exist. However, many applications, some of them presented in the following, indicate that assuming that the curves admit second, third,... order derivatives is not realistic. Assuming that the curves to be reconstructed are smoother than they really are could lead to missing important information carried by the data. In this contribution, we propose a definition of the local regularity of the curves which could be easily estimated from the data and used to estimate the curves.  

To formalize the framework, let $I\subset \RR$ be a compact interval of time. We consider $N$ functions $\Xp{1},\dotsc,\Xp{n},\dotsc,\X{N}$ generated as a random sample of a stochastic  process $X = (X_t : t\in I)$ with continuous trajectories. 
For each $1\leq n \leq N$, and given a positive integer $M_n$, let $\Tnm$, $1\leq m \leq  M_n$, be the random observation times for the curve $\Xp{n}$. These times are obtained as independent copies of a variable $T$ taking values in $I$. The integers $M_1, \dotsc,M_N$ represent an independent sample of an integer-valued random  variable $M$ 
with expectation $\mu$. 
Thus $M_1, \dotsc,M_N$ is the $N$th line in a triangular array of integer numbers. 
We assume that the realizations of $X$, $M$ and $T$ are mutually independent.   The observations associated with a curve, or trajectory, $X^{(n)}$ consist of  the pairs  $(\Ynm , \Tnm ) \in\mathbb R \times I $ where $\Ynm $ is defined as
\begin{equation}\label{model_eq}
    \Ynm = \Xn({\Tnm}) + \enm,
    \qquad 1\leq n \leq N,  \; \; 1\leq m \leq M_n,
\end{equation}
and  $\enm$  are independent copies of a centered error variable $\varepsilon$. For the sake of readability, here and in the following, we use the notation $X_t$ for the value at $t$ of the generic process $X$ and $X^{(n)}(t)$ for the value at $t$ of the realization  $X^{(n)}$ of $X$. The $N-$sample of $X$ is composed of two sub-populations: a \emph{learning} set of $\N0$ curves and a set of $\N1$ curves to be recovered that we call the \emph{online} set. Thus,  $1\leq \N0,\N1 <N$ and $\N0+\N1=N$. 
Let $\Xp{1},\dotsc,\Xp{\N0}$ denote the curves corresponding to the \emph{learning} set.

Our first aim is to define a meaningful, model-free concept of local regularity for the process $X$ and to build an estimator for it.
The estimator could be computed easily and rapidly from the observations $(\Ynm , \Tnm )$ corresponding to the curves in the \emph{learning} set, and does not require a very large number $\N0$ of curves. Moreover, it could be easily updated if more curves are added to the \emph{learning} set. The problem of estimating the regularity of $X$ is related to the estimation of the Hausdorff, or fractal, dimension of time series. See, for instance, \cite{const_hall94,chan_wood_04,gneiting12} and the references therein. However, herein, we adopt the FDA point of view and use the so-called \emph{replication} and \emph{regularization} features of functional data (see \cite{ramsay_functional_2005}, ch.22). More precisely, we combine information both across and within curves. Thus, taking strength from the information contained in the whole set of $\N0$ available time series, we are able to investigate more general situations: $X$ need not to be a Gaussian, or a transformed Gaussian process, it is not necessarily  stationary or with stationary increments, it could have a fractal dimension which changes over time, it is observed with possibly heteroscedastic noise, at random moments in time. 

The local regularity we study determines the regularity of the sample paths of $X$. Sample paths regularity determines, for instance,  the minimax optimal rate for the nonparametric estimators of the mean and covariance functions.  In particular, knowing the local regularity serves to distinguish the so-called sparsely and dense sampled curve cases. See \cite{cai2011,zhang_sparse_2016}. For some widely used examples, the local regularity  is also related to the rate of decrease of the eigenvalues of the covariance operator, a property which is commonly used in FDA literature. In almost all existing contributions, the sample paths regularity and the rate of the eigenvalues are supposed given. We here propose a simple method to estimate them.

Based on the regularity estimates, our second objective is to build an adaptive, nearly optimal smoothing for a possibly very large set of $\N1$ new curves. While several smoothers could be used, we focus on local polynomials.  Optimal  curve reconstruction is an important step in FDA, for instance for computing the median curve  or the depth of a curve, to detect outliers. See, \emph{e.g.,} \cite{romo2009}. It can also serve to compute optimal mean and covariance functions estimator in the dense case. See, \emph{e.g.,}  \cite{cai2011,zhang_sparse_2016}.
Let 
$$
\Xc{1} = \Xp{\N0+ 1},\dotsc,\Xc{\N1} = \Xp{N},
$$ 
denote the curves from the \emph{online} set to be recovered from the corresponding observations $(\Ynm , \Tnm )$. 
This  issue  is a nonparametric estimation problem and, if each curve regularity is given, nonparametric estimators of the curves $X^{[1]},\dotsc,X^{[\N1]}$ could be easily built, for instance using the local linear smoother or the series estimator. Nevertheless in applications, there is no reason to suppose that the sample paths of the random process $X$ have a known regularity. 
When it is not reasonable to assume a given regularity for the trajectories, one could use one of the existing data-driven procedures for determining the optimal smoothing parameter. However, the  existing procedures, such as the cross-validation or the Goldenshluger-Lepski method \cite{GL2011}, were designed for the case where one observes only one curve. Thus one has to apply them for each curve separately, which could require large amounts of resources. 

In Section \ref{sec:local-regularity}, we define the local regularity and provide concentration bounds for the estimator of the local regularity of the trajectories of $X$. Our results are new and of non-asymptotic type, in the sense that they hold for any values of the sample sizes $\N0$ and the mean value of observation times $\mu$, provided these values are sufficiently large. In Section \ref{sec:ad_opt}, we explain the relationship between the probabilistic concept of local regularity for the trajectory of $X$ and the analytic regularity of the curves which usually determines the optimal risk rate in nonparametric estimation. We also provide insight into the relationship between the local regularity and the rate of decrease for the eigenvalues of the covariance operator. 
Given the estimate of the local regularity of the trajectories of $X$,  in Section \ref{sec:ad_opt} we build adaptive local polynomial estimators and provide a non-asymptotic bound for the pointwise risk of the local polynomial smoother, uniformly over the \emph{online} set. This uniform bound is obtained using an exponential-type moment bound for the pointwise risk for the local polynomial smoother, a new result of interest in itself. The pointwise risk bound is optimal, in the nonparametric regression estimation sense, up to some logarithmic factors induced by our stochastic curves model,
the concentration of the local regularity estimator, and the uniformity over the \emph{online} set. 
Assuming that $X$ has a constant regularity over the interval $I$, we also derive a non-asymptotic bound for the risk of the local polynomial smoother uniformly over $I$, and uniformly over the \emph{online} set.   
In Section \ref{sec:empi}, we provide some additional guidance for the implementation  of the local polynomial smoother and report results from simulation showing that our  estimator of the local regularity and the adaptive local polynomial estimator  perform well in both cases, whether or not the trajectories are differentiable. As a further application of our local regularity estimation approach, we consider the median curve estimation problem for samples of noisy  curves. Our median curve is obtained from the smoothed curves with the optimal bandwidth given by the local regularity estimate. We compare the accuracy of our median curve  with that obtained with the curves  smoothed by cross-validation. While the accuracy is comparable, the computation time is far shorter when using our approach, and this makes it suitable for embedded systems or for applications with online data. 
A real data application on vehicle traffic flow analysis illustrates the effectiveness of our approaches. The proofs of our results are postponed to the Appendix. Additional technical aspects, simulation results, and details on traffic flow application are also relegated to the Appendix. To further illustrate the irregularity of the curves in applications, we also report in the Appendix the local regularity estimates for another three functional data sets often analyzed in the literature.

\section{Local regularity estimation}\label{sec:local-regularity}

The new local regularity estimator  is introduced and studied in this section. After providing some insight into the ideas behind the construction, we provide a concentration result for our estimator under general mild assumptions which do not impose a specific distribution for $X$. In particular, $X$ could, but need not, be a Gaussian process.  The case where the variance of the noise is not constant is also discussed. 

\subsection{The methodology}\label{sec:heur}

Let us present the main ideas behind the construction of the regularity estimate. For this, let us 
introduce some more notation used throughout the paper. Let $K_0$ be an integer value which will be defined below, and consider the order statistics of a $M$-sample $T_1,\dotsc, T_M$ distributed as $T$ which admits the density $f$. Let $\T \in I$ such that $f(\T) > 0$. We extract the subvector of the $K_0$ closest values to $\T$ and denote  these values $T_{(1)}\leq\dotsc\leq T_{(K_0)}$. If $\T = \inf(I)$ then $\T\leq T_{(1)}$, while if $\T = \sup(I)$, then $T_{(K_0)}\leq \T$. When $\T$ is an interior point of $I$, $\T$ likely lies between $T_{(1)}$ and $T_{(K_0)}$. 
Next, we define the interval
\begin{equation}\label{def_J}
    J_{\mu}(\T) = \big(\T-|I|/\log(\mu), \T+ |I|/\log(\mu)\big) \cap I,
\end{equation}
where $|I|$ denotes the length of the interval $I$ and, recall, $\mu$ is the expectation of $M$.
 In the following, we introduce our conditions using the interval $  J_{\mu}(\T) $, which depends on $\mu$. The theoretical results we derive are non-asymptotic, in particular they hold for any fixed $\mu$, provided it is sufficiently large. 
If one is interested by asymptotic results corresponding to the case where $\mu$ increases to infinity, then our $ J_{\mu}(\T) $ is eventually contained in any fixed neighborhood of $\T$. In this case, one can state all the assumptions on $X$ in a more standard way, using a fixed interval instead of our $ J_{\mu}(\T) $.

We assume that the process $X$ generating the continuous  curves $X^{(1)}\!,\dotsc,X^{(N)}$  satisfies 
\begin{equation}\label{key_condX}
    \EE\left[(X_{u}-X_{v})^{2}\right]
    \asympp L_{t_0}^{2}|v-u|^{2\HT} , \quad u,v\in J_\mu(\T),
\end{equation}
for some $\HT\in (0,1]$ and $L_{t_0}>0$ which could both change with $\T$.
 Here and in the following, $\asympp$ means the left-hand side is equal to the
right-hand side times a quantity which tends towards 1 when $|v-u|\rightarrow 0$. 
When the trajectories of $X$ are not differentiable, $\HT$ is what we call the \emph{local regularity of the process $X$ at $\T$}. For now, we focus on this case. When, with probability 1,  the trajectories of $X$ admits derivatives of order $\KT\geq 1$ in a neighborhood of $\T$, the property \eqref{key_condX} will be used for the derivative of order $\KT$ of the smooth trajectories. In this smooth case, the local regularity of the process $X$ at $\T$ will be $\KT+\HT$.  See the comment following Theorem~\ref{thm:Ht0}.

Some commonly used processes have the  eigenvalues  of the covariance operator such that, for some $\nu >1$, $\lambda_j \sim j^{-\nu}$, $j \geq 1$. Such processes  have a constant local regularity. 
Moreover,  $\KT+\HT= (\nu-1)/2$. 
As an example,  the stationary fractional Ornstein-Uhlenbeck process with index $\rho\in (0,2)$ has  the covariance function 
$$
\Gamma (s,t) = \exp(-a|s-t|^\rho), \text{ for some } a>0,
$$
which yields $\nu = 1+\rho$, $\HT\equiv  \rho/2$ and $\KT=0$. 
 Among the nonstationary processes satisfying our condition \eqref{key_condX}, we can mention the fractional Brownian motion with Hurst exponent $ H \in(0,1)$, for which $\HT\equiv H$ and $\lambda_j \sim  j^{-(1+2H)}$. Examples with $\KT >0$ could be obtained by $\KT-$times integration of the processes with $\KT=0$, such as for instance 
the so-called $\KT-$integrated Brownian motion. See, \emph{e.g.}, \cite{pages04} for more details on these examples.

To  construct our estimator of $\HT$, 
we consider  the event 
$$
\mathcal{B}=\{M\geq K_0, T_{(1)}\in J_\mu(\T),\dotsc,T_{(K_0)} \in J_\mu(\T)\},
$$ 
which is expected to be of high probability. Let  $\mathbf 1_{\mathcal{B}}$ denote the indicator of $\mathcal{B}$ and let us define the expectation operator 
$$
\EEB (\cdot ) = \EE (\cdot \mathbf 1 _{\mathcal B} ).
$$
Using~\eqref{key_condX} and the independence between $X$ and $T$,  for any $1\leq k < l \leq K_0$, 
\begin{align*}
    \EEB\left[(X_{T_{(l)}}-X_{T_{(k)}})^{2}\right]
    &\asympp L_{t_0}^{2}\EEB\left(|T_{(l)}-T_{(k)}|^{2\HT}\right).
\end{align*}
From this and the moments of the spacing $T_{(l)}-T_{(k)}$ as given in the Lemma \ref{lem:Tl-Tk_main}, we obtain
\begin{equation*}
    \EEB\left[(X_{T_{(l)}}-X_{T_{(k)}})^{2}\right]
    \asympp 
    L_{t_0}^{2} \left( \frac{l - k}{f(\T)(\mu+1)} \right)^{2\HT}.
\end{equation*}
Now, for any $1\leq k \leq K_0$, let $\varepsilon_{(k)}$ be a generic error term corresponding to the generic realization $X_{T_{(k)}}$, and denote
\begin{equation*}
    Y_{(k)} = X_{T_{(k)}} + \varepsilon_{(k)}.
\end{equation*}
Moreover, for $k$ such that $2k-1\leq K_0$, let
\begin{equation*}
    \theta_k = \EEB\left[(Y_{(2k-1)}-Y_{(k)})^{2}\right].
\end{equation*}
 Let $\sigma^2$ denote the variance of the error term, assumed to be finite.  We then obtain
\begin{equation}\label{eq:equivY2}
    \frac{\theta_k - 2\sigma^{2}}{L_{t_0}^{2}}
    \asympp 
    \left( \frac{k-1}{f(\T)(\mu+1)} \right)^{2\HT}.
\end{equation}
We distinguish two situations~: the case where $\sigma^2$ is known and the case where it is unknown. In the former case, we suppose that $4k-3$ is also less than $K_0$ and use twice the relationship~\eqref{eq:equivY2} with $k$ and $2k-1$, respectively.
We deduce
\begin{equation}
    \frac{\theta_{2k-1} - 2\sigma^{2}}{\theta_{k} - 2\sigma^{2}}
    \asympp 
    4^{\HT}.
\end{equation} 
Taking the logarithm on both sides, we obtain the proxy value
\begin{equation*}
    \HT(k, \sigma^{2}) = \frac{\log(\theta_{2k-1} - 2\sigma^{2})
    -  \log(\theta_{k} - 2\sigma^{2})}{2\log2},
\end{equation*}
of the local regularity parameter $\HT$, when $\sigma^2$ is given. In the case where $\sigma^2$ is unknown, assuming that $8k-7\leq K_0$,  we use the relationship~\eqref{eq:equivY2} three times with $k$, $2k-1$ and $4k-3$, respectively,
to obtain
\begin{equation}
    \frac{\theta_{4k-3} - \theta_{2k-1}}{\theta_{2k-1} - \theta_{k}}
    \asympp 
    4^{\HT}.
\end{equation}
A natural proxy of $\HT$ is then given by
\begin{equation}\label{eq:proxy}
    \HT(k) = \frac{\log(\theta_{4k-3} - \theta_{2k-1})
    -  \log(\theta_{2k-1} - \theta_{k})}{2\log2}.
\end{equation}

Our estimator of the local regularity parameter $\HT$ is the empirical version of the proxy value $ \HT(k) $, or $\HT(k, \sigma^{2}) $, built 
from a random sample of $\N0$ trajectories of $X$, the learning set of curves. Formally, we consider the sequence of events, for $1 \leq n \leq \N0$,
\begin{equation}\label{eq:eventA}
  \mathcal{B}_n = \mathcal{B}_n(\mu, \N0)
    =
    \left\{M_n \geq K_0, T^{(n)}_{(1)} \in J_\mu(\T), \dotsc,  T^{(n)}_{(K_0)} \in J_\mu(\T) \right\},
\end{equation}
and we define
\begin{equation}\label{eq:def-hattheta}
    \hat\theta_k = \frac{1}{\N0}\sum_{n=1}^{\N0}
    \big[Y^{(n)}_{(2k-1)}-Y^{(n)}_{(k)}\big]^{2}\mathbf{1}_{\mathcal{B}_n},
\end{equation}
where, for any $n$ and  $k$, $Y^{(n)}_{(k)}$ denotes the noisy measurement of $X^{(n)}(T_{(k)}^{(n)})$. 
If $\HT(k, \sigma^{2}) $ is indeed a good approximation of $\HT$, a simple estimator of $\HT$ when $\sigma^2$ is known is then 
\begin{equation}\label{eq:hatH-sigma}
    \hHT(k,\sigma^2) = \begin{cases}
        \dfrac{\log(\hat\theta_{2k-1} \!- 2\sigma^{2})
        -  \log(\hat\theta_{k}\! - 2\sigma^{2})}{2\log2}
        &\text{if}\; \min( \hat\theta_{2k-1},  \hat\theta_{k}) >2\sigma^2\\
        1 &\text{otherwise}.
    \end{cases}
\end{equation}
The default value 1 is arbitrary and  could be replaced by any number between 0 and 1.
When $\sigma^2$ is unknown the corresponding estimator is
\begin{equation}\label{eq:hatH}
   \hHT(k) = \begin{cases}
        \dfrac{\log(\hat\theta_{4k-3} - \hat\theta_{2k-1})
        -  \log(\hat\theta_{2k-1} - \hat\theta_{k})}{2\log2}
        &\text{if  $\hat\theta_{4k-3}>\hat\theta_{2k-1}>\hat\theta_{k}$}\\
        1 &\text{otherwise},
    \end{cases}
\end{equation}
where $\hat\theta_{4k-3}$ is obtained from the formula of $\hat\theta_{2k-1}$ after replacing $k$ by $2k-1$. 

It is worth noting that our estimator could be easily updated every time new curves are included in the learning sample, without revisiting the learning set already used. Indeed, one should only add new terms in the sums defining $\hat\theta_{k}$, $\hat\theta_{2k-1}$ and $\hat\theta_{4k-3}$.

The Associate Editor drew our attention on a large literature related to the estimation of the regularity of nonparametric functions.
\cite{gh07} consider that  data are noisy measurements of one sample path from a scaled fractional Brownian motion (fBm) with unknown Hurst parameter and unknown scale. The measurements are sampled on an equidistant grid, and the noise is allowed to be heteroscedastic. The authors derive the optimal rate for estimating the Hurst parameter. Moreover, they provide an estimator achieving the optimal rate. The estimator of \cite{gh07} is  based on the self-similarity property of the fBm process. Our estimator relies on a related property imposed to the second order moment of the increments; see \eqref{key_condX} above. Our condition defines a significantly larger class of processes. By construction, our results are not easily comparable to that of Gloter and Hoffmann, which are more refined but derived in a different, more restrictive context. Our estimator is designed to take advantage of the replication feature of functional data, where sample averages provide simple estimates for the moments of the increments. Another related topic extensively studied in the literature, is the construction of confidence or credible sets for a curve of unknown regularity. See, for instance, \cite{gn10,bu12,bn13}, or \cite{ra17} for the Bayesian approach. Such quite elaborate methods, which often require the calibration of some tuning parameters, are not specifically designed for the  functional data context. Finally, another related problem is testing the regularity of a signal. See, for instance, \cite{ca15}. However, such methods, designed to be applied to one signal, do not provide a direct estimator of the regularity.

\subsection{Concentration bounds for the local regularity estimator}

Below, we focus on the more complicated and realistic case with unknown variance. The case with given variance could be treated after obvious adjustments. 
The results in this section depend on $\mu$, the mean number of observation times $T$, and the cardinality $\N0$ of the learning set of curves. However,  they are non-asymptotic in the sense that they hold true for any sufficiently large $\mu$ and $\N0$ satisfying our conditions. 
For deriving our results, we impose the following mild assumptions.

\begin{assumptionH}
    \item\label{ass:DGP} The data consist of the pairs $(\Ynm , \Tnm ) \in\mathbb R \times I $ defined as in~\eqref{model_eq}, with $I\subset \mathbb R$ a compact interval,   and the realizations of $X$, $M$ and $T$ are mutually independent.
    
    \item\label{ass:T} The random variable $T$ admits a density $f : I \to \RR$ such that $f(\T)>0$. Moreover, there exist $L_f>0$ and $0<\beta_f\leq 1$ such that
$$|f(u) - f(v)|\leq L_f |u-v|^{\beta_f}, \qquad \forall u,v\in J_\mu(\T).$$
       
    \item\label{ass:L2} 
    There exist a function $\phi_\T (\cdot,\cdot)>0$, the  constants $L_{\T}, L_\phi>0$ and $0<\beta_\phi\leq 1$ such that, for any $u,v \in J_\mu(\T)$, we have
    \begin{align}
        \label{eq:def-phi}
        \EE\left[(X_{u}-X_{v})^{2}\right]
        &= L_{t_0}^{2}|u-v|^{2\HT} \left\{ 1+\phi_{\T}(u,v)\right\} \quad\text{and}\\  
        |\phi_\T(u,v)| &\leq L_\phi |u-v|^{\beta_\phi}.
    \end{align}

    \item\label{ass:Lp} 
    Two constants $\mathfrak{a}, \mathfrak{A}>0$ exist such that
    \begin{equation}
        \EE\big[|X_{u} - X_v|^{2p}\big]
        \leq 
        \frac{p!}2 
        \mathfrak{a}
        \mathfrak{A}^{p-2} |u-v|^{2p\HT},
\qquad \forall p\geq 2,\;\forall u,v\in J_\mu(\T)  .
    \end{equation}
    
    \item \label{ass:eps} The variables  $\varepsilon^{(n)}_m$, $ n, m \geq 1$, are independent copies of a centered variable $\varepsilon$, with finite variance $\sigma^2$, for which  
    constants $\mathfrak{b}\geq\sigma^{2}>0$ and $\mathfrak{B}>0$ exist such that
    \begin{equation*}
        \EE(|\varepsilon|^{2p}) \leq \frac{p!}2 \mathfrak{b}\mathfrak{B}^{p-2} ,\qquad \forall p\geq 1.
    \end{equation*}

    \item\label{ass:M} The random variable $M$ is such that $M\geq 9$ and  $\gamma_0>0$ exists such  that, for any $s>0$, $\PP\left( |M-\mu| > s \right) \leq \exp(-\gamma_0 s).$
\end{assumptionH}

Assumption \assrefH{ass:T} imposes a  mild condition on the distribution of the random observation points which provides convenient moment bounds for their spacings.
In particular, it implies that, for a sufficiently large $\mu$,
$f(\T)/2 \leq f(t)\leq 2 f(\T)$, $\forall t\in J_\mu(\T)$.
Assumption \assrefH{ass:L2} is a version of the so-called local stationarity condition. More precisely, \assrefH{ass:L2} implies that the trajectories of the process
$X = (X_u : u\in I)$ are  Hölder continuous in quadratic mean in the neighborhood of $\T$, with exact exponent $\HT$ and  local Hölder constant $L_\T$. Let us call $\HT$ the \emph{local regularity of the process $X$ at $\T$}. Examples include, but are not limited to,  stationary or stationary increment processes $X$. 
See, \emph{e.g.}, \cite{blanke2014} for some examples and references on processes satisfying the mild condition in \assrefH{ass:L2}. 
Assumptions \assrefH{ass:Lp} and \assrefH{ass:eps} are needed for deriving exponential bounds for the concentration of our local regularity estimator, and are satisfied by the sub-Gaussian random variables. In particular, \assrefH{ass:Lp} is satisfied by the Gaussian processes. \assrefH{ass:M}  is a mild condition for controlling the variability of number of observation points on the curves. The lower bound on $M$ guarantees that each curve in the learning set has a sufficient number of observation times for building our estimator.  
For a real number $a$, let $\lfloor a \rfloor$ denote the largest integer not exceeding $a$. 

\begin{theorem}\label{thm:Ht0}
    Let Assumptions~\assrefH{ass:DGP}--\assrefH{ass:M} hold true.  Let $K_0$ be a positive integer  such that 
    \begin{equation}\label{eq:selectK0}
        (\mu+1)^{\frac{\beta_f\alpha}{4+\beta_f\alpha}}\leq K_0 \leq  \frac{\mu}{2 \log(\mu)},
    \end{equation}
    with $\alpha = 2\HT+\beta_\phi \in (0,3]$.
    Let 
    \begin{equation}\label{eq:cdt_epsi}
        \mathfrak{c}
          \left( \frac{K_0-1}{f(\T)(\mu+1)} \right)^{\min(\beta_\phi, \beta_f \HT /2)} < \epsilon <  \frac{2}{\log2} ,
    \end{equation}
    with $\mathfrak{c}$ a constant depending only on $L_f$, $\beta_f$, $\beta_\phi$, $f(\T)$ and $\HT$.
    Define $k=\lfloor (K_0+7)/8\rfloor $ and let $\hHT=\hHT(k)$ be defined as in \eqref{eq:hatH}.
    Then, for a sufficiently large $\mu$~:
    \begin{equation}\label{eq:concen}
        \PP\left( \big|\hHT - \HT\big| > \epsilon \right)
        \leq 
        12\exp\left[ -\mathfrak{f} 
        N_0 \epsilon^2 \left( \frac{k-1}{f(\T)(\mu+1)} \right)^{4\HT}\right],
    \end{equation}
    where $\mathfrak{f}$ is a positive constant depending on $\mathfrak{a, A, b, B}$ and the length of the interval $I$.
\end{theorem}

To obtain a non-trivial  estimator of $\HT$, we need $k\geq 2$, thus the upper bound in  \eqref{eq:selectK0} should be larger than 9, and this happens as soon as $\mu\geq 80$. For the estimator in \eqref{eq:hatH-sigma}, which requires an estimate of $\sigma^2$, we would only need $\mu > 35$. The exact expressions of the constants  $\mathfrak{c}$ and $\mathfrak{f} $ could be traced in the proof of Theorem  \ref{thm:Ht0}. The condition imposed on $K_0$ provides a panel of choices depending on $\N0$ and $\mu$. 
As a result, up to some constants, and depending on $L_f$, $\beta_f$, $\beta_\phi$, $f(\T)$ and $\HT$, the concentration rate $\epsilon$, one could expect, could be in a range such that $\epsilon\mu \gg 1$ and $\epsilon \log ^{1/2} (\mu) \ll 1$. The best possible concentration of $\hHT$ is guaranteed as soon as  $\N0$ is larger than some power of $\mu$, while for a concentration as fast as some negative power of $\log (\mu)$,  one only needs a small number  $\N0$ of curves in the learning set, that is larger than some power of $\log (\mu)$.

For the purpose of building an adaptive optimal kernel estimator for the trajectories of $X$, we will impose $\epsilon = \log^{-2}( \mu)$ and an exponential bound  equal to $ \exp(-\mu)$. The following corollary proposes a data-driven choice of $K_0$ which guarantees these requirements. This choice is guided by the fact that, for any constants $a,b>0$,  we have the relationship $
\log^a (\mu) \leq \exp ((\log\log(\mu))^2) \leq
\mu^b,
$
provided $\mu$ is sufficiently large.

\begin{corollary}\label{cor:enfin}
    Assume the conditions of Theorem \ref{thm:Ht0} hold true. Let
    $$
    \widehat \mu = \N0^{-1} \sum_{n=1}^\N0 M_n, \qquad
    \widehat K_0 = \lfloor \widehat \mu \exp(-(\log\log(\widehat \mu  ))^2) \rfloor,
    $$
    and $\hHT=\hHT( \lfloor   (\widehat K_0+7)/8 \rfloor )$, with $ \hHT$ defined in \eqref{eq:hatH}.
  Then, for any constant $C>0$,
    \begin{equation}\label{eq:concen4}
        \PP\left( \big|\hHT- \HT\big| > C\log^{-2}( \mu) \right)
        \leq
        \exp(-\mu),
    \end{equation}
   provided   $N_0\geq \mu^{1+b}$ for some $b>0$ and  $\mu$ is sufficiently large. 
    
\end{corollary}

The conditions in Corollary \ref{cor:enfin} impose $\mu$ to be large, but still allow for many cases in the three regimes non-dense, dense and ultra-dense, as defined by \cite{zhang_sparse_2016}. 

One could also  build $\hHT$ with only one trajectory of a 
process $X$ with stationary increments. 
If the density of $T$ is uniform and sufficiently many measurements are available, it suffices to split the interval $[0,1]$ into $\N0$ intervals of the same length  and apply our methodology considering the measuring times and the noisy measured values in each block as belonging to a different curve in the learning set. Theorem \ref{thm:Ht0} and Corollary \ref{cor:enfin} remain valid.

\subsection{The case of conditionally heteroscedastic noise}\label{sec:hetero}

In some applications, the assumption of constant variance for the error term $\varepsilon$ could be unrealistic. Therefore, we consider the following conditional heteroscedastic error extension of model \eqref{model_eq}: 
\begin{equation}\label{model_eq2}
    \Ynm = \Xn({\Tnm}) + \sigma\left(  \Xn({\Tnm}), \Tnm\right) \; \unm, \; 1\leq n \leq N, 1\leq m \leq M_n,
\end{equation}
where $\sigma(\cdot,\cdot)$ is some unknown function and $\unm$  are independent copies of a centered variable $u$ with unit variance. 

Our approach also applies to the model \eqref{model_eq2} under some additional mild conditions. Indeed, assuming the expectations exist, we have
\begin{align*}
   \theta_k= \EEB\left[(Y_{(2k-1)}-Y_{(k)})^{2}\right] &= \EEB\left[(X_{T_{(2k-1)}}-X_{T_{(k)}})^{2}\right] \\ &\qquad + \EEB\left[ \sigma^2 \left(  X_{T_{(2k-1)}}, T_{(2k-1)}\right) \right]  \\ &\qquad + \EEB\left[ \sigma^2\left(  X_{T_{(k)}}, T_{(k)}\right) \right]  .
\end{align*}
From this identity it is clear that the arguments presented in Section \ref{sec:heur} remain valid as long as the value of the last two expectations on the right-hand side of the last display does not depend on $k$. Thus, in this case, even if the conditional variance of $\enm$ is not given, we could consider the same estimator $\hHT$. This remark leads us to the following additional assumption. 

\begin{assumptionE}
    \item\label{ass:Hhetero} 
    The variables $u^{(n)}_m$ from model \eqref{model_eq2} satisfy the Assumption \assrefH{ass:eps} with unit variance. Moreover, the function $\sigma(\cdot,\cdot)$ is bounded and the map $u\mapsto\EE\left[ \sigma^2( X_u,u) \right]$,  $u\in I$, is constant in a fixed neighborhood of $\T$.  
\end{assumptionE}

Assumption \assrefE{ass:Hhetero} allows the error term to be conditionally heteroscedastic, but imposes marginal (unconditional) homoscedasticity in a neighborhood of $\T$. 

Under Assumption \assrefE{ass:Hhetero}, for any $k$ we have
\begin{align*}
\EEB\left[ \sigma^2(  X_{T_{(k)}}, T_{(k)} ) \right]  &= \EE \! \left[ \EE\left(  \sigma^2(  X_{T_{(k)}}, T_{(k)}) \mid M,T_1,T_2,\ldots,T_M \right)  \mathbf 1 _{\mathcal B} \right] \\&= \EE\left[ \sigma^2( X_u, u) \right] \PP(\mathcal B),
\end{align*}
and thus the  terms like $\EEB[ \sigma^2(  X_{T_{(k)}}, T_{(k)} ) ] $ cancel when considering the differences $\theta_{4k-3} - \theta_{2k-1}$ and $\theta_{2k-1} - \theta_{k}$.

\begin{corollary}\label{thm:Ht0b}
    Assume the observations consist of  the pairs  $(\Ynm , \Tnm ) \in\mathbb R \times I $ where $\Ynm $  defined as in \eqref{model_eq2} and the realizations of $X$, $M$ and $T$ are mutually independent. Assume that Assumptions~\assrefH{ass:T}--\assrefH{ass:Lp}, \assrefH{ass:M}, \assrefE{ass:Hhetero} hold. Then Corollary \ref{cor:enfin} remains valid with the same local regularity estimator $\hHT$.  
\end{corollary}

The proof of Corollary \ref{thm:Ht0b} follows from the proof of Theorem \ref{thm:Ht0} after obvious modifications, and hence will be omitted. It is worthwhile noting that, even if the regularity $\HT$ is the same at any point $\T$, one may not be able to estimate the regularity $\HT$ using only  one observed noisy trajectory with conditionally  heteroscedastic noise. This because, intuitively, it might be impossible to identify the oscillations of the signal of interest, that is to separate the increments of the trajectory of $X$ from the differences of the error terms with variable variance. With our approach  based on local observed increments averaged over several curves, the effect of the noise vanishes, provided the expectation of the conditional variance is constant. Hence, eventually the identification of the oscillations of $X$ is recovered and there is no difference with respect to the case of homoscedastic errors.

\subsection{The case of differentiable sample paths}\label{sec:xheur}

The definition of the local regularity extends to the case of differentiable curves. When the curve admits derivatives up the order $\KT>0$, condition \eqref{key_condX} has to be stated for the $\KT$-th order derivative of the curve. To build an estimate of the local regularity of the $\KT $-th derivative of the curve, we propose to use a smoothing-based approximation of the $\KT $-th derivative.  In the Appendix, we derive concentration bounds for the estimator of $\KT+\HT$ when $\KT>0$. In particular, we propose an estimator of $\KT$. The implementation of the estimator of $\KT+\HT$ is described in Section \ref{sec:simuls}, and the simulation results we report shows that it performs well. However, many real data we analyzed, revealed that in many applications, the sample paths do not seem differentiable. For this reason, and to save space, we leave to the supplement, the details of learning the smoothness in the general case $\KT\geq 0$.


\section{Adaptive optimal smoothing}\label{sec:ad_opt}

With at hand an estimate of the local regularity $\ST = \KT + \HT$ obtained from a learning set of $N_0$ curves, we aim at recovering $N_1$ new noisy trajectories of $X$ from what we call the  online dataset. One of the most popular smoother is the local polynomial estimator, see \cite{fan_local_1996}. 
This estimator  depends on a tuning parameter, the bandwidth, which should ideally be chosen according to the
regularity of the target function. Using the local regularity estimate, one can build optimal smoothing using alternative approaches, such as the splines. Here we focus on local polynomials.

One has to connect a definition of local regularity that is meaningful from the theory of stochastic processes to the usual definition of function regularity used in nonparametric curve estimation. Fortunately, in our framework, the parameter $\ST$, which is understood as the local regularity of the process $(X_{t} : t\in J_\mu(\T))$ in \emph{quadratic mean}, see~\eqref{key_condX}, is intrinsically linked with the regularity of the sample paths of the process.  Indeed, in many important situations, which are covered by our assumptions, the regularity of the sample paths of a process does not depend on the realization of this process. For example, the regularity of any Brownian path is $1/2$,  in the sense that for any $\epsilon>0$, almost surely the sample path belongs to the Hölder space $\mathcal C^{1/2-\epsilon}(I)$ and does not belong to $\mathcal C^{1/2+\epsilon}(J)$ whatever $J\subset I$. Here, for any $a>0$,  $\mathcal C^{a}(I)$ denotes the space of uniformly $a-$Hölder continuous functions defined on $I$, see Theorem~2.2 and Corollary~2.6 of \cite{MR1725357} for precise definition.
More generally the regularity of the sample paths of a process is linked to integrated regularities through the Kolmogorov's Continuity Theorem~\cite[][Theorem~2.1]{MR1725357}. In particular, Assumption~\assrefH{ass:L2} ensures that, with probability 1, the trajectories of the process $(X_t ^{(\KT)}: t\in J_\mu(\T))$ are
Hölder continuous with any  exponent parameter $0<a < \HT$. 

Below, we define the local polynomial estimator and derive its theoretical properties. Since our focus of interest is the \emph{simultaneous} denoising of the additional $\N1$ curves, we consider the following pointwise risk: for a generic estimator $\hXT1$ of $ \XT1$, let
\begin{equation}\label{risk_max}
    \mathcal R (\widehat X;\T) =  \EE\left[  \max_{1\leq \n1 \leq \N1} \left|\hXT1 - \XT1 \right|^2 \right].
\end{equation}
First, we provide a sharp bound for this risk with $\N1=1$, in the case where a suitable estimator of $\ST= \KT + \HT$, computed from another independent sample, is given. 
Such a result, of interest in itself in nonparametric curve estimation,
seems to be new. In this case, the expectation defining the risk $ \mathcal R (\widehat X;\T) $ should be understood as the conditional expectation given the estimator of $\ST$. Next, we provide a sharp bound for $\mathcal R (\widehat X;\T)$ in the case where $\N1 \geq 1$ and the estimator of $\ST$ is obtained using the approach introduced in Section \ref{sec:local-regularity}.

\subsection{Local polynomial estimation}

We assume that $\KT\geq 0$ is an integer and $\HT\in(0,1)$. Let $\hKT$ and $\hHT$ be some generic estimators of $\KT$ and $\HT$, respectively, and let $\hST = \hKT + \hHT$ be the corresponding estimator of $\ST = \KT + \HT$. We assume that  $\hKT$ and $\hHT$ are independent of the $\N1$ from the online dataset, generated according to \eqref{model_eq}.

The estimator of $\ST$ could be used to smooth any curve $Y^{[\n1]}$ ($\n1= 1 ,\dotsc,\N1$) from the online dataset.
For the sake of readability, we omit the superscript $[\n1]$ and  we consider a generic curve from the online dataset:
\begin{equation*}
    Y_m = X(T_m) + \varepsilon_m,\qquad 1\leq m \leq M. 
\end{equation*}
For any $u\in\RR$, we consider the vector $U(u) = (1, u, \dotsc, u^{ \hKT}/\hKT!)$. Let $K:\RR\to\RR$ be a positive kernel and define:
\begin{equation}\label{eq:loc-poly-min}
    \vartheta_{M,h}=
    \operatorname*{arg\,min}_{\vartheta\in\RR^{\degree+1}}
    \sum_{m=1}^{M}\left\{ Y_m - \vartheta^\top 
    U\left(\frac{T_m-\T}{h}  \right)\right\}^2
    K\left(\frac{T_m-\T}{h}  \right),
\end{equation}
where $h$ is the bandwidth. 
The vector $\vartheta_{M,h}$ satisfies the normal equations $A \vartheta_{M,h} = a$ with
\begin{align}
    A = A_{M,h} &= \frac{1}{Mh} \sum_{m=1}^{M} U\left( \frac{T_m-\T}{h} \right)U^\top\left( \frac{T_m-\T}{h} \right)K\left( \frac{T_m-\T}{h} \right)\label{eq:Anstar}\\
    a = a_{M,h} &= \frac{1}{Mh} \sum_{m=1}^{M}
    Y_m U\left( \frac{T_m-\T}{h} \right) K\left( \frac{T_m-\T}{h} \right).\label{eq:anstar}
\end{align}
Let  ${\lambda}$ be the smallest eigenvalue of the matrix ${A}$ and remark that, whenever ${\lambda}>0$, we have ${\vartheta}_{M,h} = A^{-1} {a}$.

Taking into account the expression of the bandwidth minimizing the pointwise mean squared risk for a regression function defined on $I$, with derivative of order $\KT$  which is Hölder continuous in a neighborhood of $\T$, with exact exponent $\HT$, we consider a  bandwidth 
\begin{equation*}
   \widehat h \sim M^{- 1/(2\hST +1)}.
\end{equation*}
Our focus of interest is on determining a nearly optimal rate of the bandwidth to be used to recover the trajectories of $X$. For the applications, one could also be interested in a nearly optimal constant, which in general needs to be estimated. In Section \ref{sec:simuls} we propose a simple way to estimate a suitable constant for the applications. 

With at hand the bandwidth $\widehat h$, we propose the following definition of the local polynomial estimator of $X_{\T}$ of order $\degree$:
\begin{equation}\label{eq:loc-poly}
    \hat X_\T =
    \begin{cases}
        U^\top(0) \widehat{\vartheta}
        &\text{if } \lambda > \log^{-1}(M)  \text{ and } | U^\top(0) \widehat{\vartheta} | \leq \widehat \tau^{5/12}(M) 
        \\
                \widehat \tau^{5/12}(M) 
        &\text{if } \lambda > \log^{-1}(M)  \text{ and } | U^\top(0) \widehat{\vartheta} | > \widehat \tau^{5/12}(M) 
        \\
        0 &\text{otherwise,}
    \end{cases},
\end{equation}
where $\widehat \vartheta = \vartheta_{M,\widehat h}$ and, 
for any $y>1$,
\begin{equation*}
  \widehat   \tau(y) = \frac{1}{ \log^{2}(y)}
    \left(\frac{y}{\log (y)}\right)^{2\hST/(2\hST+1)}.
\end{equation*}
The upper trimming with $\widehat \tau^{5/12}(M) $ is a technical device used to control the tails of $ \hat X_\T$. It has practically no influence in applications.  
For deriving our results on $ \hat X_\T$, we impose the following mild assumptions. 

\begin{assumptionLP}
    
    \item\label{ass:Lp2} 
    There exist two positive constants, $\mathfrak{a}$ and $\mathfrak{A}$, such that for any $p \geq 1:$ 
    \begin{equation}
        \EE\big[|X_{t}|^{2p}\big]
        \leq 
        \frac{p!}2 
        \mathfrak{a}
        \mathfrak{A}^{p-2},\quad \forall t\in[0,1] .
    \end{equation} 
    Moreover, 
for $\KT \geq 0$ in Assumption \assrefH{ass:L2}, 
if $\nabla^{\KT} X$ denotes the $\KT$--th derivative of the process $X$, 
  \begin{equation}
        \EE\big[|  \nabla^{\KT}
 X_{u} - \nabla^{\KT}
X_v|^{2p}\big]
        \leq 
        \left(
        \frac{p!}2 
        \mathfrak{a}
        \mathfrak{A}^{p-2} 
        \right)
        |u-v|^{2p\HT}, \; \forall p\geq 2,\;\forall u,v\in J_\mu(\T)  .
    \end{equation}

    \item\label{ass:M2} We assume that, almost surely, $\mu/\log(\mu) \leq M \leq \mu\log(\mu).$

    \item\label{ass:hatHt} 
    The estimator $\hHT$ satisfies the property
    \begin{equation*}
        \PP\left( \big|\hHT - \HT\big| > \log^{-2}( \mu) \right)
        \leq \mathfrak{K}_1\exp(-\mu),\qquad \forall \mu >0,
    \end{equation*}
    where  $\mathfrak{K}_1$ is some positive constant.
    
    \item\label{ass:hatkt0} 
    The estimator $\hKT$ satisfies the property
    $
    \PP (  \hKT \neq \KT )\leq \mathfrak{K}_1\exp(-\mu)$ , $\forall \mu >0.
    $
\end{assumptionLP}

The first part of Assumption~\assrefLP{ass:Lp2} provides a suitably tight control on the moments of $X_t$, but still allows for unbounded trajectories. The second part of   Assumption~\assrefLP{ass:Lp2}, is a technical condition which reinforces Assumption \assrefH{ass:Lp}. It allows to control the analytic regularity of the sample paths. More precisely, it is implicitly used in the definition of the variable $\Lambda_\beta$ in \eqref{eq:lambda_RY}. Assumption~\assrefLP{ass:M2}   is a convenient, but mild, technical condition. It could be relaxed at the price of controlling the probability of the complement of the event $\{\mu/ \log(\mu) \leq M \leq \mu\log(\mu)\}$, for instance using \assrefH{ass:M}. Assumptions~\assrefLP{ass:hatHt} and \assrefLP{ass:hatkt0} are very mild conditions that the generic estimators of the regularity should satisfy. Since $\mu ^{1/\log^2(\mu)} = e^{1/\log(\mu)}$ for any $\mu >1$, the concentration of $\hHT$ at a suitable negative power of $\log (\mu)$ will suffice for the smoothing purposes. For simplicity, and without loss of generality we consider the same constant $\mathfrak{K}_1$ in  Assumptions~\assrefLP{ass:hatHt} and \assrefLP{ass:hatkt0}.

\begin{theorem}\label{thm:exponential-bound}
    Assume that Assumptions~\assrefH{ass:DGP}, \assrefH{ass:T}, \assrefH{ass:eps} and \assrefH{ass:M} and Assumptions~\assrefLP{ass:Lp2}--\assrefLP{ass:hatkt0} hold true and let $K(\cdot)$ be a kernel such that, for any $t\in\RR :$
    \begin{equation}
        \label{eq:bounded-kernel}
        \kappa^{-1}\1_{[-\delta, \delta]}(t)  \leq K(t) \leq \kappa \1_{[-1,1]}(t) ,\quad \text{for some } 0<\delta<1 \text{ and } \kappa\geq1.
    \end{equation}
    There then exists a constant $\Gamma_{\!0}$ such that for any $\mu\geq 1$,
    \begin{equation*}
        \EE\left[
        \exp\left\{\left(
        \tau(\mu)
        \left|\hat X_\T - X_\T\right|^2
        \right)^{1/4}\right\}
        \right]
        \leq \Gamma_{0}
        \end{equation*}
        where
        \begin{equation*}
        \tau(\mu) = \frac{1}{ \log^{2}(\mu)}\left(\frac{\mu}{\log (\mu) }\right)^{\frac{2\ST}{2\ST+1}}.
        \end{equation*}
    
\end{theorem}

The bound on the $\exp(\sqrt{x})-$moment of the $|\hat X_\T - X_\T|$ seems a new result for local polynomial estimators. For our purposes, it will entail a sharp bound for   $\mathcal R (\widehat X;\T) $. More precisely,  the price for considering a risk measure uniformly over the whole online dataset is very low, that is a multiplying factor as large as a power of $\log (\N1)$  in the risk bound we derive below. \cite{gaiffas2007} derived sharp bounds for all the moments of  $|\hat X_\T - X_\T|$. However, his bounds on the moments would induce a power of $\N1$ as multiplying factor for our risk bound, instead of the power of $\log (\N1)$.

\begin{theorem}\label{thm:local-poly}
    Assume that assumptions of Theorem \ref{thm:exponential-bound} hold true, and let $K$ be a kernel which satisfies~\eqref{eq:bounded-kernel}. There then exists a positive constant $\Gamma_{1}$ such that
    \begin{align*}
      \mathcal R (\widehat X;\T) &= \EE\!\!\left[\!
        \max_{1\leq \n1 \leq  \N1} \left|\hXT1 - \XT1\right|^2
        \right] \\ &\leq \Gamma_{ 1} \log^2  (\mu)  \log^4  (1+N_1) \{\log (\mu)\}^{\frac{2\ST}{2\ST+1}}\mu ^{- \frac{2\ST}{2\ST+1}}.
    \end{align*}
\end{theorem} 

If all the trajectories $X$ were in $\mathcal{C}^{\ST}(J_\mu(\T))$, $\ST$ were known and $N_1=1$, the risk bound for $\mathcal{R}(\widehat X;\T)$  would be of the usual nonparametric rate $\mu^{-2\ST/(2\ST+1)}$.
Let us note that the fact that $\HT$ is not known does not have any consequence on the risk bound in Theorem~\ref{thm:local-poly}. Indeed, since $\mu^{1/\log^2\mu}=e^{1/\log\mu}$ for any $\mu$, the order of the  risk bound does not change as soon as the probability of the event $\{  \hKT = \KT \}\cap \{|\hHT-\HT|\leq 1/\log^2\mu\}$ tends to 1. The $\log (1+N_1)$ factor is given by the maximum over the $\N1$ curves in the online dataset. The factor $\{\log(\mu)\}^{2\ST/(2\ST+1)}$ is due to the concentration properties of $M$ around its mean $\mu$. This factor would not appear if $M/ \mu$ is almost surely bounded and bounded away from zero. The factor $\log^2(\mu)$ comes from  probability theory. The trajectories of a stochastic process $X$ with local regularity $\HT$ does not necessarily belong to $\mathcal{C}^{\ST}(J_\mu(\T))$ but they are almost surely in any $\mathcal{C}^{\ST-\epsilon}(J_\mu(\T))$ for any $0<\epsilon <\ST$. 

Finally, let us notice that Corollary~\ref{cor:enfin} states that  the estimator defined by~\eqref{eq:hatH} satisfies \assrefLP{ass:hatHt} for $\KT=0$ and any $0<\HT<1$. This leads us to the following result. 

\begin{corollary}\label{corr:local-poly}
    Assume $\KT=0$ and let $\hHT$ be the estimator of $0<\HT<1$ defined in Corollary \ref{cor:enfin}.   
    Moreover,  Assumptions~\assrefH{ass:DGP}--\assrefH{ass:M} and Assumptions~\assrefLP{ass:Lp2}--\assrefLP{ass:M2} hold true. If $\N0\geq \mu^{1+b}$ and $\N1\leq \mu^B$ for some $b,B>0$, then
    \begin{equation*}
           \mathcal R (\widehat X;\T) 
        \leq \Gamma_{1} B^4    \log^7(\mu)  \mu ^{- \frac{2\HT}{2\HT+1}}.
    \end{equation*}
\end{corollary}

Finally, we establish the uniform convergence of the recovered trajectories. Below, the uniformity is with respect to $\T\in I$ and over all the curves in the online sample.

    \begin{theorem}\label{thm:local-poly-sup}
        Assume that assumptions of Theorem \ref{thm:exponential-bound} hold true, uniformly, for any $\T\in I$. Let $\varsigma = \ST$ be the global regularity of the process. Let also $K$ be a kernel which satisfies~\eqref{eq:bounded-kernel}. There then exists a positive constant $\Gamma_{\!1}$ such that
        \begin{equation*}
        \EE\!\!\left[\!
            \max_{1\leq \n1 \leq  \N1} 
            \sup_{\T\in I}\left|\hXT1 - \XT1\right|^2
            \right] \! \leq \Gamma_{\!1} \Psi(\mu,\N1).
        \end{equation*}
        \begin{equation}\label{eq:Psi}
            \Psi(\mu,\N1) = \frac{ \log^4(\N1) + \log^8(\mu)  }{ \log^{2}(\mu)}
    \left(\frac{\log(\mu)}{\mu}\right)^{\frac{2\varsigma}{2\varsigma+1}}.
        \end{equation}
    \end{theorem}

\section{Empirical analysis}\label{sec:empi}

In the usual local polynomial (LP) smoothing framework, for a regression function defined on $I$, given a sample of size $M$, a bound of the pointwise, mean squared error risk is first derived and this bound is then minimized with respect to the bandwidth $h$.  See for instance \cite{tsybakov2009}. When the regression function admits a derivative of order $\KT$ which is Hölder continuous in a neighborhood of $\T$, with exact exponent $\HT$ and  local Hölder constant $L_\T$, 
the optimal bandwidth  is 
\begin{equation}\label{eq:b}
h_{opt} = \left(\frac{C}{M}\right)^{1/(2\ST + 1)}
\quad \text{with}
\quad C = C_\T = \frac{q_2}{2\ST q_1^2},
\end{equation} 
with $q_1$ and $q_2$ defined on pages 39-40 of \cite{tsybakov2009}. With the Nadaraya-Watson estimator, we take 
$q_1= L_\T \{ \lfloor \ST \rfloor ! \}^{-1} \int   K(v)   \lvert v \rvert^{\ST}dv$ and $q_2=\sigma^2_\T f(\T)^{-1} \int K^2(v) dv $, where  $\sigma^2_\T$ is the variance of the noise that could depend on $\T$.  This yields a refined constant $C$ in this case. Details are given in the Appendix.  The  value $f(\T)$ can be estimated using all the $\Tnm$. 

Thus our target bandwidth $h_{opt}  $ depends on two more unknown quantities, $L_\T$ and $\sigma^2_\T$, for which we now propose estimation procedures. 
The estimation of $L_\T$ could be based on similar ideas as used for $\HT$. For simplicity, we assume $\KT=0$. The extension to the case $\KT \geq 1$ could follow the same pattern as for the estimation of the local regularity, using the trajectories of the derivatives. 
Using twice the relationship \eqref{eq:equivY2} with $k$ and $2k-1$, respectively,  we deduce 
$$L_\T^2 \approx \frac{\theta_{2k-1} - \theta_{k}}{4^{\HT} - 1}\left(\frac{f(\T)(\mu + 1)}{k - 1}\right)^{2\HT}  .$$
On the other hand, using  the approximation of the moments of the spacings, as given in Lemma \ref{lem:Tl-Tk_main}, we have
\begin{align*}
\eta_{2k-1}  - \eta_{k} &:= \EEB\left[|T_{(4k-3)}-T_{(2k-1)}|^{2H_{\T}}\right] - \EEB\left[|T_{(2k-1)}-T_{(k)}|^{2H_{\T}}\right]  \\ &\approx (4^{\HT} - 1) 
\left(\frac{k - 1}{f(\T)(\mu + 1)}\right)^{2\HT}.
\end{align*}
Given an estimator of $\HT$, the empirical counterparts of $\eta_k$ obtained from the learning set of $N_0$ independent trajectories of $X$ is
$$\widehat{\eta}_k = \frac{1}{\N0}\sum_{n = 1}^{\N0} \left\lvert T_{(2k - 1)}^{(n)} - T_{(k)}^{(n)} \right\rvert^{2\widehat{H}_\T}\mathbf{1}_{\mathcal{B}_n},$$
where   $\mathcal{B}_n$ is  the sequence of events
defined in \eqref{eq:eventA}. An estimate of $\eta_{2k-1}$ could be obtained similarly. 
These facts lead us to the following estimator of the local Hölder constant $L_\T$~:
\begin{equation}\label{est:Lt0}
\widehat{L}_\T^2 =\widehat{L}_\T^2  (\hHT) = \begin{cases}
        \dfrac{\widehat{\theta}_{2k-1} - \widehat{\theta}_k}{\widehat{\eta}_{2k-1} - \widehat{\eta}_k}
        &\text{if  $\widehat\eta_{2k-1}>\widehat\eta_{k}$ and $\widehat{\theta}_{2k-1} > \widehat{\theta}_k$,}\\
        1 &\text{otherwise}.
    \end{cases}
    \end{equation}
For the implementation we propose $k=\lfloor (\widehat K_0 +7)/8 \rfloor$ with $$\widehat K_0 = \lfloor\widehat{\mu}\exp\left(-(\log\log \widehat{\mu})^2\right)\rfloor \quad \text{and} \quad 
\widehat \mu = \N0^{-1} \sum_{n=1}^\N0 M_n.$$

To estimate the variance, we propose
\begin{equation}\label{eq:sigma2_estim}
\widehat{\sigma}^2=\widehat{\sigma}^2 _\T = \frac{1}{\N0}\sum_{n = 1}^{\N0} \frac{1}{2|\mathcal{S}_n|}\sum_{m \in  \mathcal S_n }
\left[Y_{(m)}^{(n)} - Y_{(m-1)}^{(n)}\right]^2,
\end{equation}
where $\mathcal S_n\subset \{2,3,\ldots,M_n\}$ is a set of indices for the $n-$th trajectory and $|\mathcal S_n|$ is the cardinal of $\mathcal S_n$. When the variance of the error $\varepsilon$ is considered constant, one could take $\mathcal S_n = \{2,3,\ldots,M_n\}$. When it depends on $\T$, one could take  
$$\mathcal S_n = \left\{m: T_{(1)}^{(n)}\leq  T_{(m)}^{(n)} \leq T_{(\widehat K_0)}^{(n)} \right\},$$
with $\widehat K_0$ defined above.  
This is the choice we used in our empirical investigation. When the variance of the errors also depends on the realizations $X_u$, as described in Section \ref{sec:hetero}, in general it is no longer possible to  consistently estimate $\sigma^2 ( X_\T, \T)$. Our simulation experiments indicate that the estimate \eqref{eq:sigma2_estim} remains a reasonable choice. 

Finally, the constant involved in the definition of the bandwidth could be estimated by $\widehat C$ obtained by plugging the estimates of the unknown quantities into the definition of $ C$ in \eqref{eq:b}. Concerning the kernel, we use $K(t) = (3/4)\left(1 - t^2\right)\mathbf{1}_{[-1, 1]}(t), $ that is the Epanechnikov kernel for which  $\| K \|^2 =3/5$ and $\int \lvert K(v) \rvert \lvert v \rvert^{\ST}dv = 3\{(\ST+ 1)(\ST+ 3)\}^{-1}.$

\subsection{Simulation experiments}\label{sec:simuls}

We now illustrate the behavior of our local regularity estimator $\hST =\hKT+\hHT$ computed using the \emph{learning} set of noisy curves, and the performance of 
kernel smoother it induces for estimating the noisy curves from the \emph{online} set. The procedure for calculating $\hST$ is summarized in the following algorithm where $LP(d)$ means local polynomial smoother with degree $d\geq 0$. The Nadaraya-Watson  smoother corresponds to $LP(0)$. 

\begin{algorithm*}[H]
\SetAlgoLined
\KwResult{Estimation of $\ST$ from the learning set of $\N0$ noisy curves}
 Calculate $\widehat \mu = \N0^{-1} \sum_{n=1}^\N0 M_n$ and $\widehat K_0 = \lfloor\widehat \mu \exp(-(\log\log(\widehat \mu))^2)\rfloor$\;
 Calculate $\hHT$ and set  $\hKT = 0$\;
 \While{$\hHT > 1 - \log^{-2}(\widehat \mu)$}{
    Calculate  $\widehat L_\T(1)$, as in \eqref{est:Lt0}, and $\widehat \sigma^2_\T $\;
    Calculate $\widehat C$ with $\hST= \hKT+\hHT$, $\widehat L_\T(1)$  and $\widehat \sigma^2_\T $\;
    Calculate the bandwidth $ \widehat h_n =  (\widehat C/M_n )^{1/(2\hST + 1)}$, $1\leq n \leq \N0$\;
    Estimate the $(\hKT+1)-$th derivative of the trajectories of $X$ with $LP( \hKT+1)$\;
    Calculate $\hHT$ using the estimated trajectories of the $(\hKT+1)-$th derivative\;
    Set $\hKT = \hKT + 1$\;
 }
 \label{alg:reg1}
 \caption{Estimation of the local regularity $\ST=\KT+\HT$}
\end{algorithm*}

For the curve estimation, we use $(Y_1^{[n_1]}, T_1^{[n_1]}), \ldots, (Y_{M_{n_1}}^{[n_1]}, T_{M_{n_1}}^{[n_1]})$, $1\leq n_1 \leq \N1$, and $LP(\hKT)$ with $\hKT$ delivered by Algorithm \ref{alg:reg1}. 
The bandwidth is calculated as $ \widehat h_{n_1} =  (\widehat C/M_{n_1} )^{1/(2\hST + 1)}$, $1\leq n_1 \leq \N1$, with 
$\hST$ obtained from Algorithm \ref{alg:reg1}. The constant estimate $\widehat C$ is the same for all curves in the \emph{online} set, that is that obtained with $\hST$, $\widehat L_\T(\hHT)$ and $\widehat \sigma^2_\T $. We compare  our approach with the classical   \emph{cross-validation (CV)} (least-squares leave-one-out) method applied for each curve $\X1$ separately. For CV, we use the \textbf{\textsf{R}} package \texttt{np} \cite{hayfield_nonparametric_2008}, after rescaling the CV bandwidth to account for their different definition of the Epanechnikov kernel. At this stage, we want to point out that our smoothing method is much faster than any standard, trajectory-by-trajectory approach, such as CV. We report a time comparison in the Appendix, and as expected, the ratio between the times needed for CV and for our approach is at least of the same order as $\N1$. It is worth noting that one cannot follow an \emph{ad-hoc} approach and transfer one CV bandwidth from a curve $\X1$ to another because  $\HT$ is not known, and could even vary with $\T$. 

The data are generated from the model \eqref{model_eq} using different settings for $X$, the distribution of $T$ and the variance of the noise, as well as for $\N0$ and $\N1$. For $X$, we consider three types of Gaussian processes: fractional Brownian motion (fBm) with constant Hurst parameter $H\in(0,1)$, fBm with piecewise constant Hurst parameter, and integrated fBm. In the later case, $X_t = \int_0^tW_H(s)ds$, where $W_H$ denotes a fBm with constant Hurst parameter $H$. The local regularity is constant for the first and the third type, and variable for the second. The third type is an example of $X$ with smooth trajectories. We identify the setting for $X$ by $s\in\{1,2,3\}$. A more detailed description of these processes, as well as plots of their trajectories, are provided in the Appendix. The number $M$ of measuring times of a curve is  a Poisson random variable  with expectation $\mu$, while for the measuring times $T$, we considered either a uniform distribution (identified by \texttt{unif}), or a deterministic  equispaced grid (\texttt{equi}) on the range $[0, 1]$. For the noise, we considered the Gaussian distribution with both constant and variable variance. The cases are identified by $\sigma^2$ which could be a number or a list, respectively. The values of $\sigma^2$ are chosen in such a way that the variance ratio signal-to-noise remains almost unchanged. 
Thus, one simulation setting  is defined by the $7$-tuple $(s, \N0, \N1, \mu, f, H, \sigma^2)$, with $f\in\{ \texttt{unif}, \texttt{equi} \}$ and $H$, the Hurst parameter, is a list in the case of fBm with piecewise constant local regularity. Below, we present the results for a few settings, complementary results are reported in the Appendix.  For each type of experiment, the reported results are obtained from 500 replications of the experiment. Our methodology is implemented in the \textbf{\textsf{R}} package \texttt{denoisr} available at {\small \url{https://github.com/StevenGolovkine/denoisr}}.

Figure \ref{fig:set2H0} presents the results for the local regularity estimation for piecewise fbM with homoscedastic noise.
The local estimations of $\HT$ are performed at $\T = 1/6, 1/2$ and $5/6$ which correspond to the middle of the interval for each regularity. The true values of $\HT$ are 0.4, 0.5 and 0.7, respectively. The results show a quite accurate estimator $\hHT$ and confirm the theoretical result on its concentration. 
Increasing either $\mu$ or $\N0$ improves the concentration. The results for  \texttt{unif} and \texttt{equi} are quite similar. 
Figure \ref{fig:set3_H0} presents the estimation of $\ST$ for different settings $(3, \N0, 500, \mu, \texttt{equi}, 1.7, 0.005)$. As expected, our local regularity estimation approach also performs well for smooth trajectories. 

\begin{figure}
    \centering
    \includegraphics[scale=0.5]{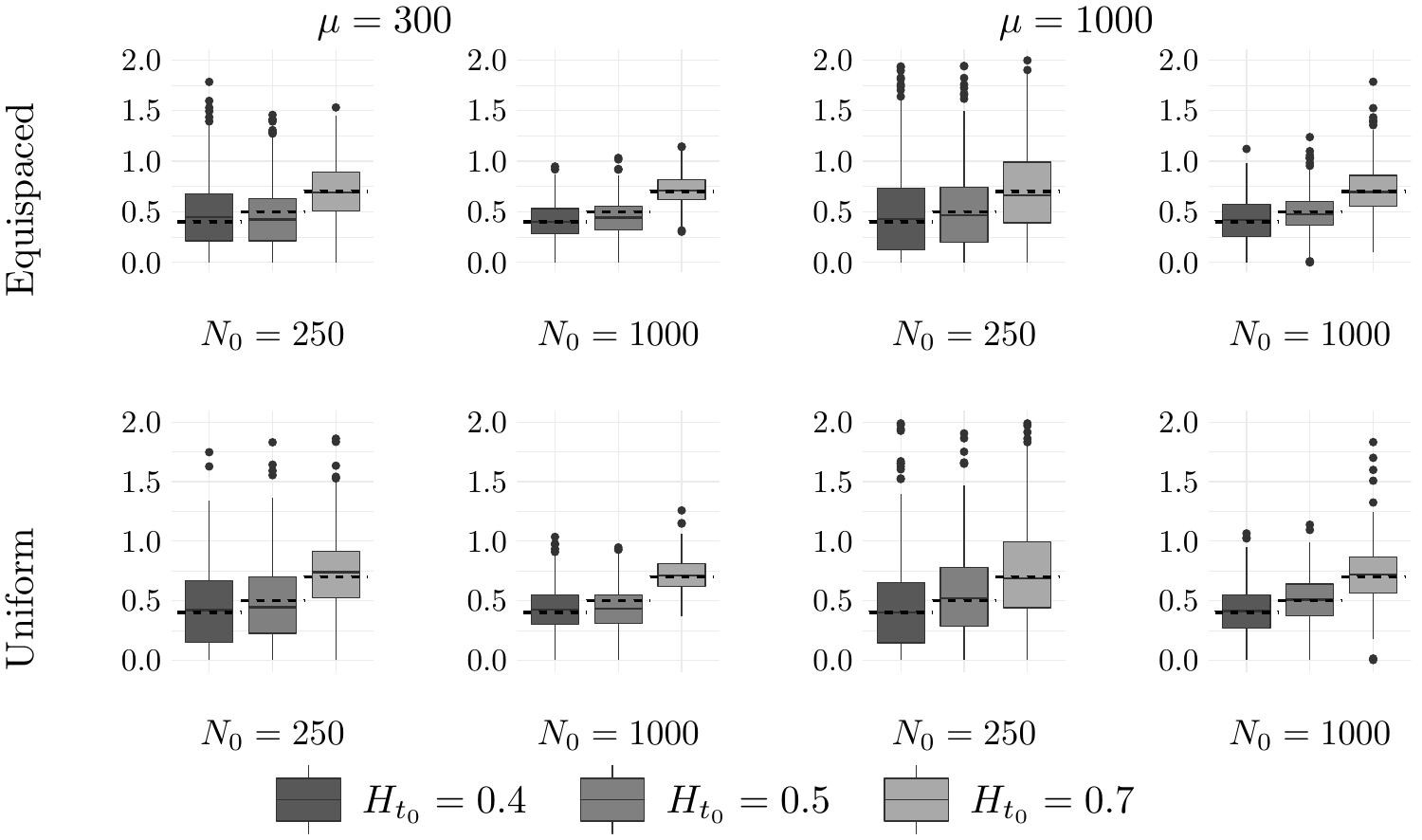}
    \caption{\small Estimation of the local regularity for piecewise fBm, with constant noise variance $\sigma^2=0.05$, at $\T=1/6, 1/2$ and $5/6$. True values: $\ST=\HT$ equal to $0.4, 0.5$ and $ 0.7$, respectively.}
    \label{fig:set2H0}
\end{figure}

\begin{figure}
\centering
\includegraphics[scale=0.425]{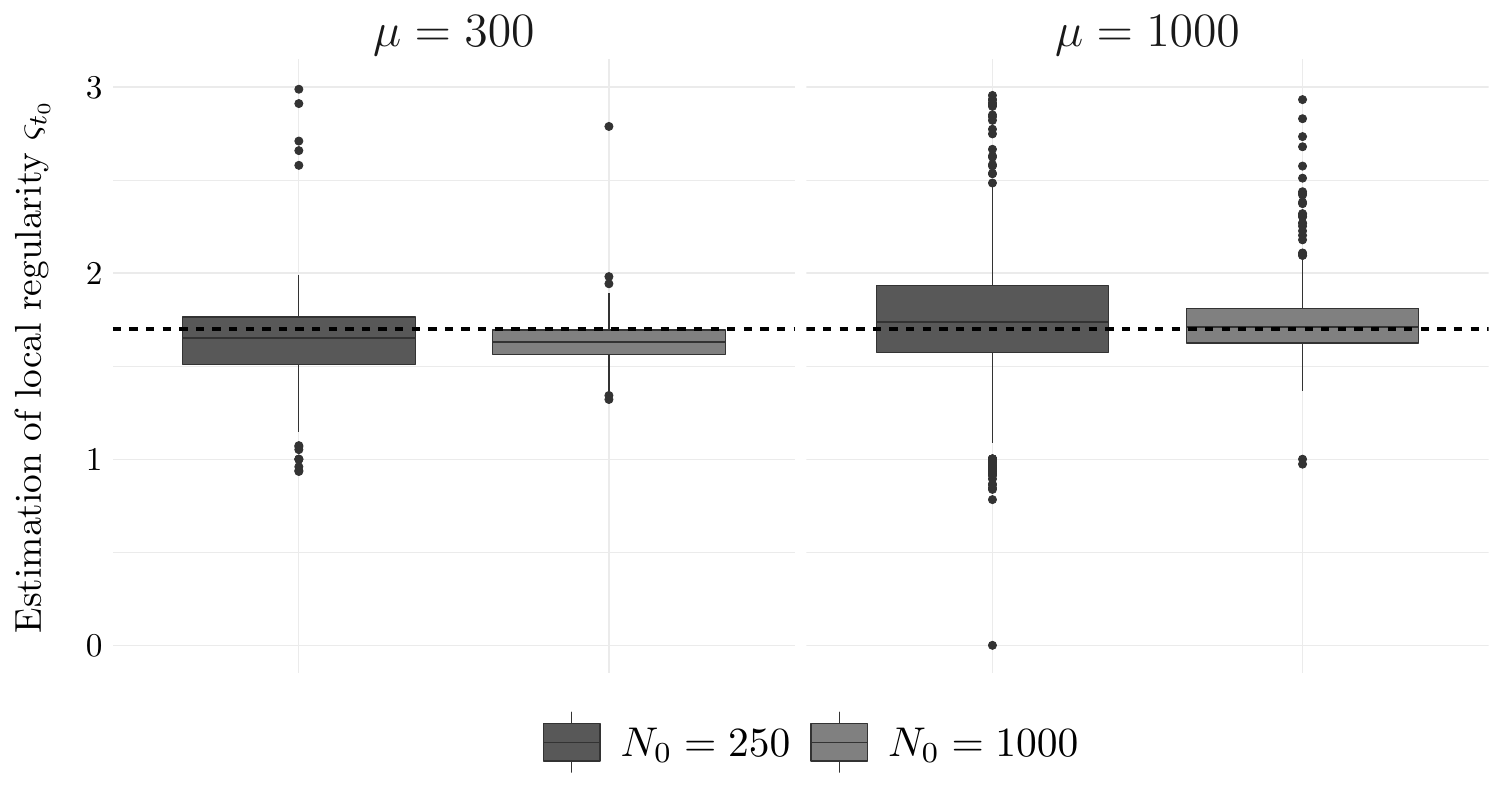}
\caption{\small Estimation of the local regularity for integrated fBm, with constant noise variance $\sigma^2=0.005$, at $\T=0.5$. True value: $\varsigma_\T=1.7$.}
\label{fig:set3_H0}
\end{figure}

Next, we present the results on the risk $\mathcal R (\widehat X;\T)$. Figure \ref{fig:set2maxrisk} presents the boxplots of the risk $\mathcal R (\widehat X;\T)$ defined in \eqref{risk_max} in the case of piecewise constant local regularity, with three values of $\T$, each one in the middle of the interval of the changes of regularity are defined. 
The results are quite good. Part of the curves with lower regularity are harder to estimate and thus results in higher risks than the more regular parts.
It appears that $N_0$ and $\mu$ do not have the same influence on the risk as the estimation of the local regularity, and this is in line with the risk bound in Theorem \ref{thm:local-poly}. Thus, going from $300$ to $1000$ sampling points leads to large improvement in terms of risk whereas going from $250$ to $1000$ curves in the \emph{learning} dataset only results in little or no improvement. 
Finally, it seems that the method achieves better results for equispaced sampling points.
\begin{figure}
    \centering
    \includegraphics[scale=0.5]{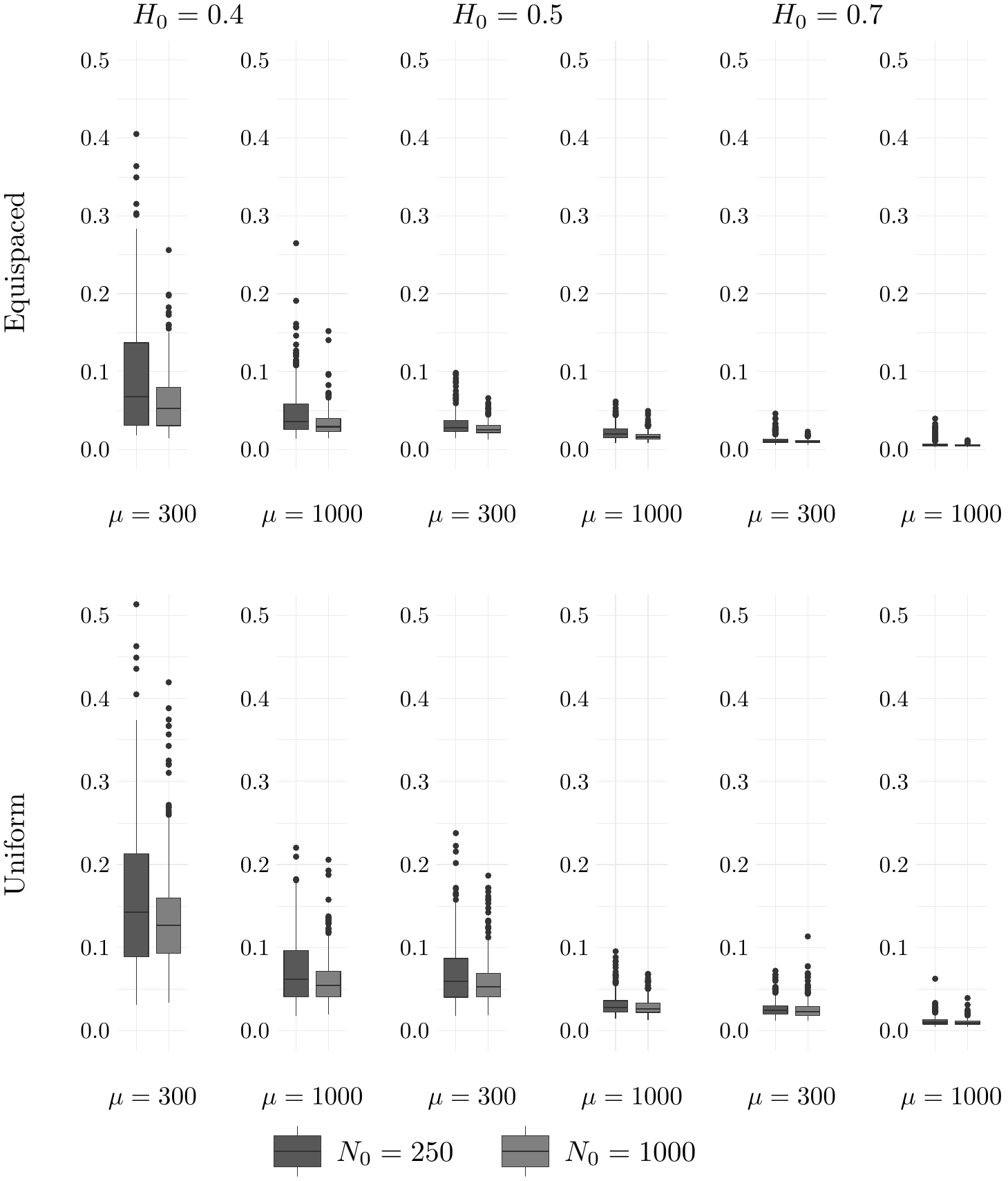}
    \caption{\small Estimation of the risks $\mathcal R (\widehat X;1/6)$, $\mathcal R (\widehat X;0.5)$ and $\mathcal R (\widehat X;5/6)$
     for piecewise fBm, with constant noise variance $\sigma^2=0.05$.}
    \label{fig:set2maxrisk}
\end{figure}

The same conclusions could be drawn from the results presented in Figure \ref{fig:set3_risk}, obtained for the simulation experiment defined by the $7$-tuple $(3, 1000, 500, 1000$, $\texttt{equi}, 1.7, 0.005)$. 
\begin{figure}
\centering
\includegraphics[scale=0.4]{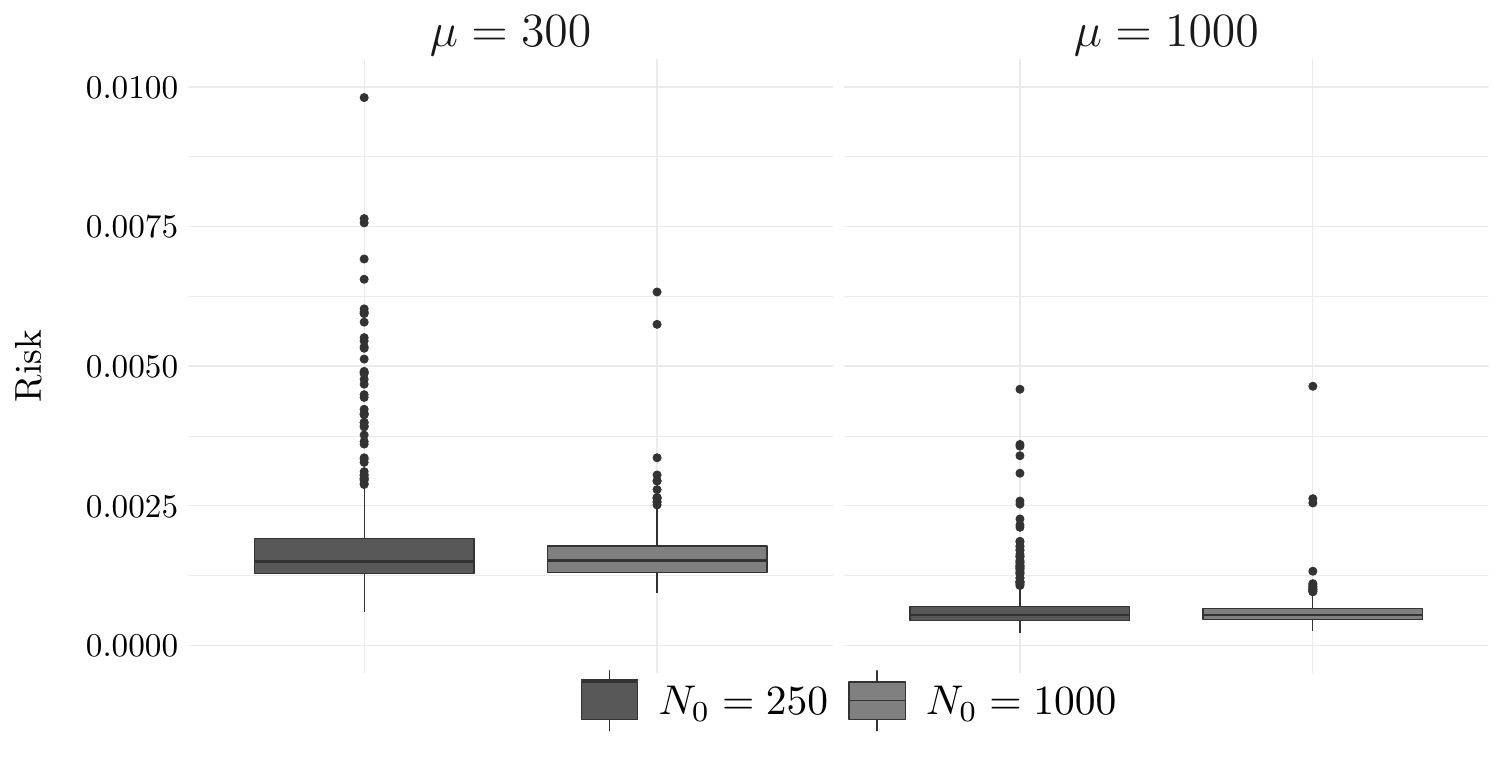}
\caption{Estimation of the risk $\mathcal R (\widehat X;0.5)$ for smoothing the noisy trajectories of an integrated fBm, with constant noise variance $\sigma^2 = 0.005$.}
\label{fig:set3_risk}
\end{figure}

Finally, we present a comparison with the CV. Because of the large amount of computing resources required by CV, we only considered a few cases.
Figure \ref{fig:set1risk} presents the results in terms of the risk  calculated at $t_0 = 0.5$ for the setting $(1, 1000, 500, 300, \texttt{equi}, 0.5, 0.05)$. We make the remark that our method and CV perform similarly despite the fact that CV uses a specifically tailored bandwidth for each curve in the \emph{online} set. The homoscedastic setting is favorable to CV which, for a given curve, uses a global bandwidth at any $\T$. Figure \ref{fig:set2comparison} presents the  the heteroscedastic setting $(2, 1000, 500, 1000, \text{equi}, (0.4, 0.5, 0.7), 0.05)$. CV preserves good performances when the local regularity varies moderately. Our method shows close performance in this case, slightly better when $\HT=0.7$. Figure \ref{fig:set3risk} presents the results in the setting $(3, 1000, 500, 1000, \texttt{equi}, 1.7, 0.005)$. Again, CV and our method perform quite similarly. 
\begin{figure}
    \centering
    \includegraphics[scale=0.33]{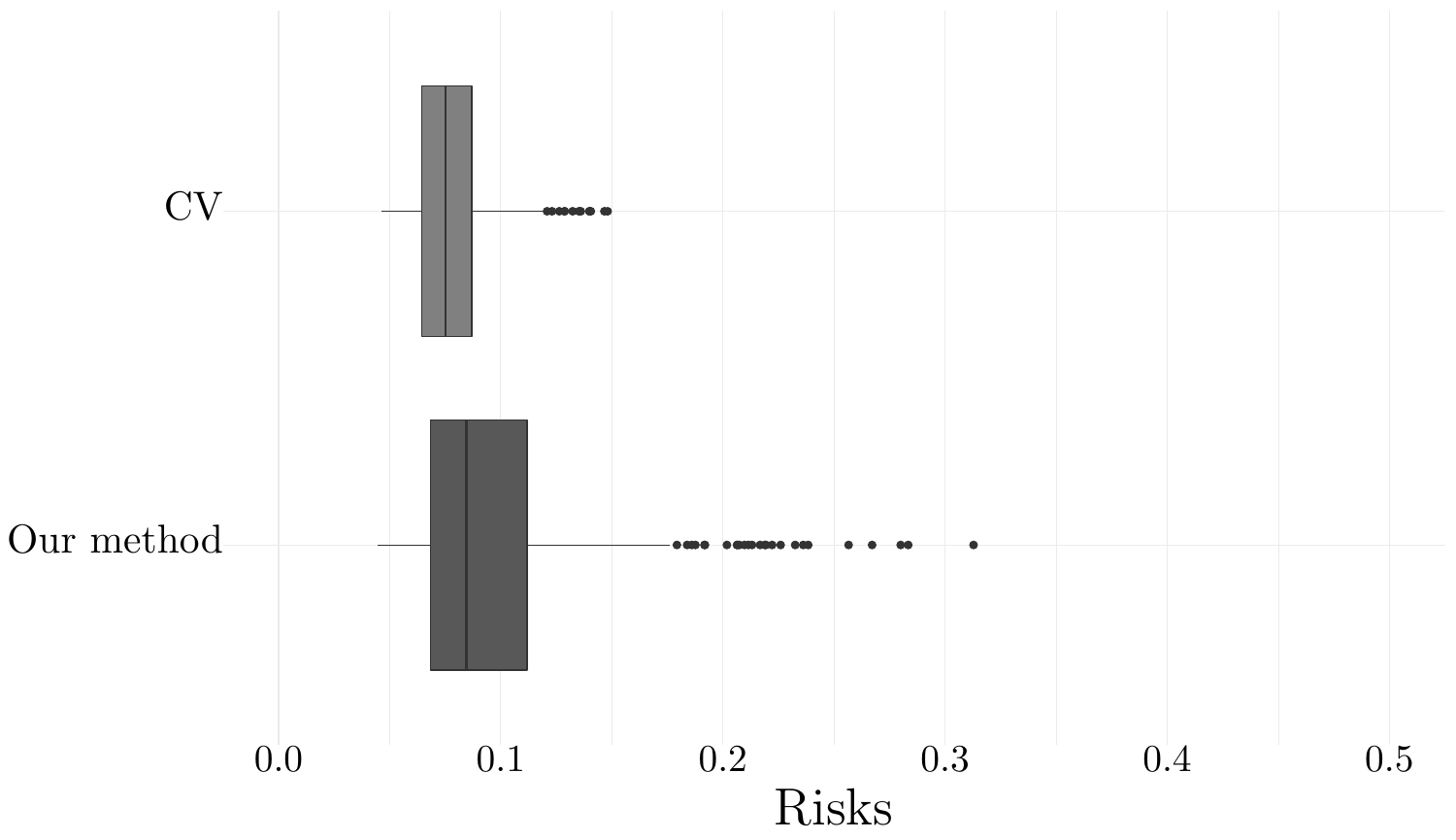}
    \caption{\small CV versus our method: comparing the pointwise risk $\mathcal R (\widehat X;0.5)$ for smoothing the noisy trajectories of a fBm; simulation $(1, 1000, 500, 300, \texttt{equi}, 0.5, 0.05)$.}
    \label{fig:set1risk}
\end{figure}

\begin{figure}
	\centering
	\includegraphics[scale=0.4]{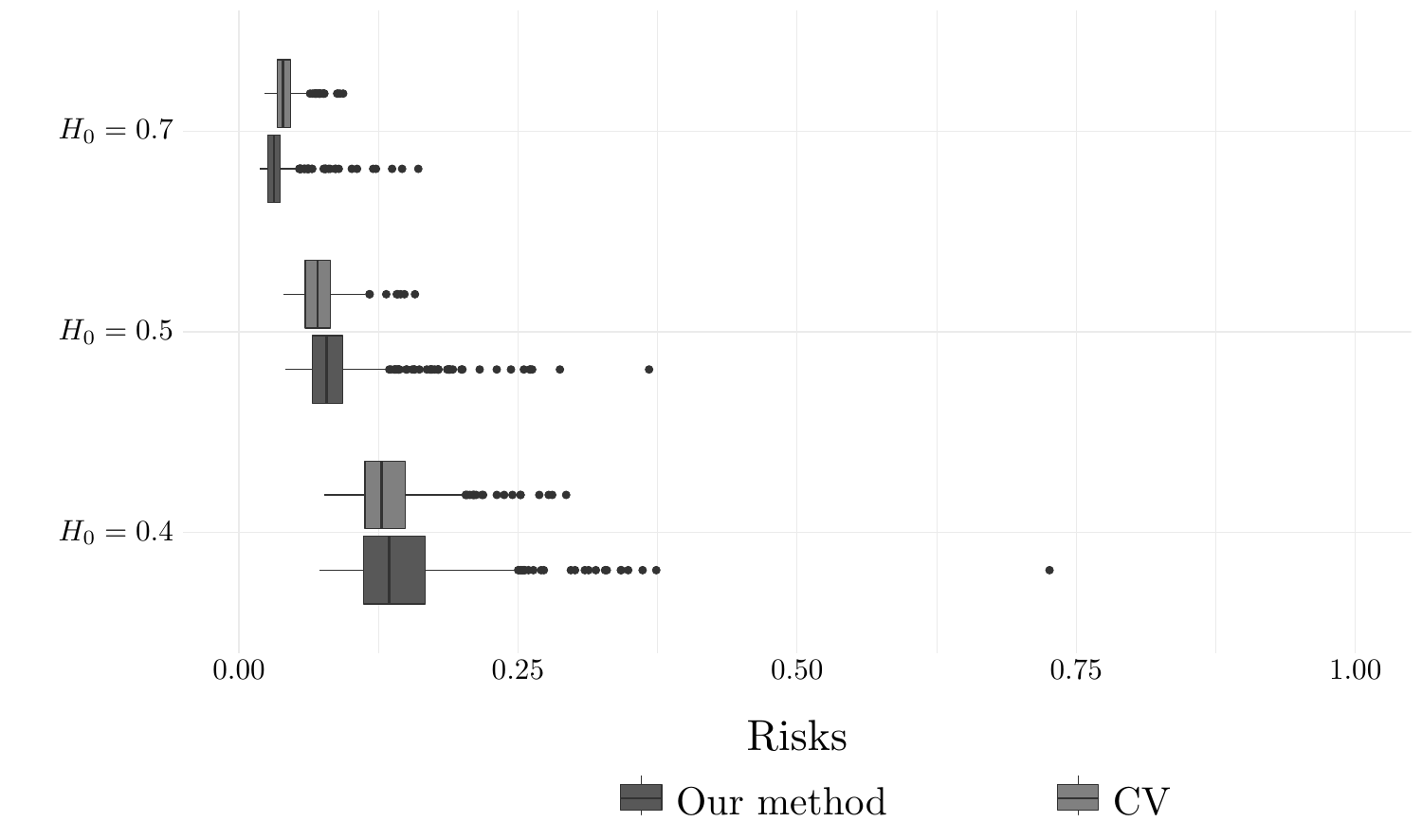}
	\caption{\small CV versus our method: comparing  $\mathcal R (\widehat X;1/6)$, $\mathcal R (\widehat X;0.5)$ and $\mathcal R (\widehat X;5/6)$ for smoothing the noisy trajectories of a piecewise fBm; simulation  $(2, 1000, 500, 1000, \text{equi}, (0.4, 0.5, 0.7), 0.05)$.}
	\label{fig:set2comparison}
\end{figure}

\begin{figure}
\centering
\includegraphics[scale=0.33]{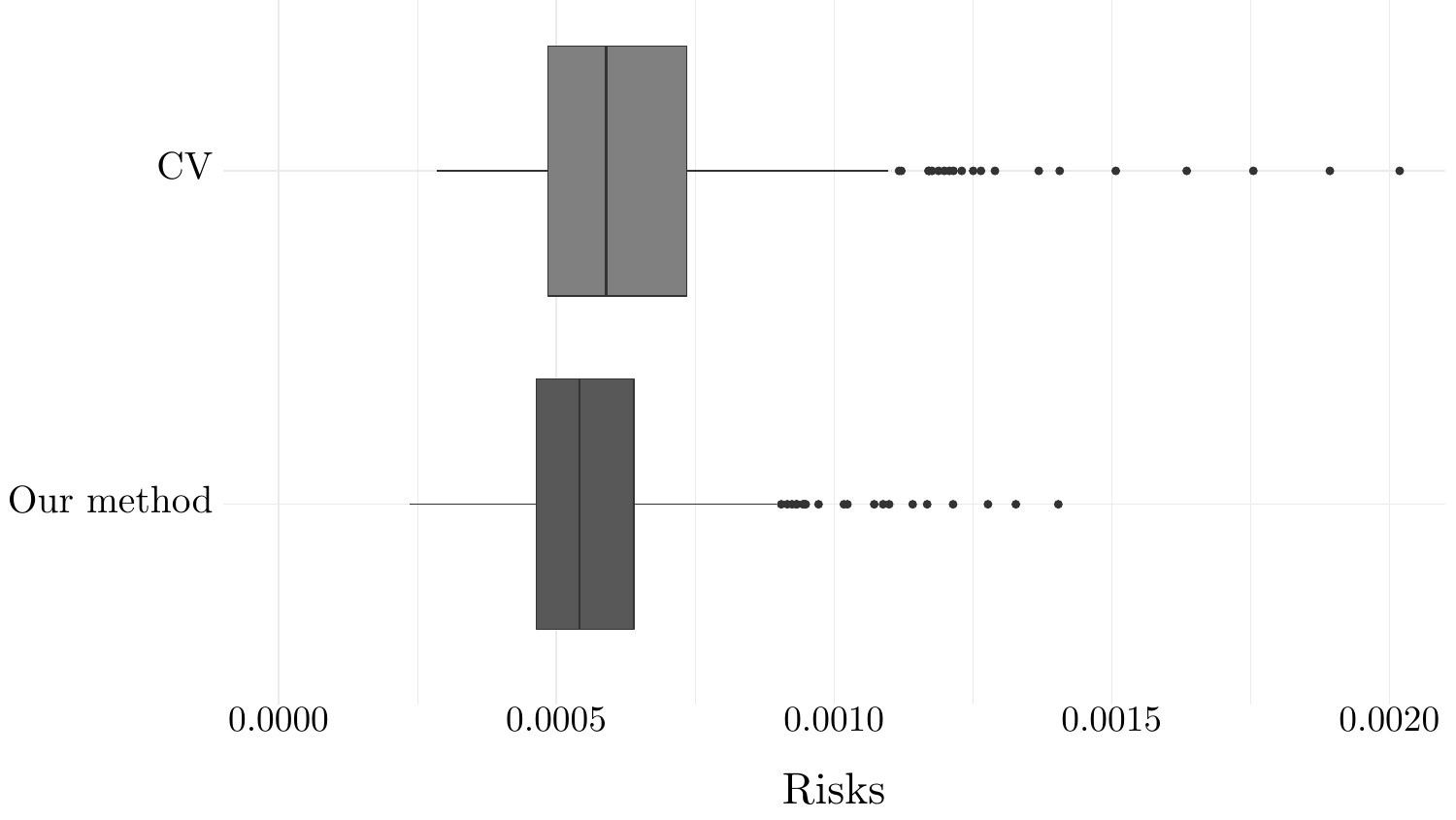}
\caption{\small CV versus our method: comparing the pointwise risk $\mathcal R (\widehat X;0.5)$ for smoothing the noisy trajectories of an integrated  fBm; simulation $(3, 1000, 500, 1000, \text{equi}, 1.7, 0.005)$.}
\label{fig:set3risk}
\end{figure}

Let us end our simulation experiments presentation with the results on the median curve estimation from noisy curves.  We consider a Gaussian simulation design given by Model 4 in \cite{romo2009}, page 726. The mean function is $g(t) = 3\sin(3\pi (t+0.5))+2t^3$ and the covariance function is $\gamma(s,t) = \exp(-|t-s|^{0.8})$. The curves are measured on a common equidistant grid with $M\in\{300,1000\}$ points and the samples of size $N\in\{100,200\}$ have a proportion of 25\% contaminated observations. We used the contamination by peaks with the parameter values used by \cite{romo2009} in their Figure 6. To the simulation design used in \cite{romo2009}, we add a Gaussian measurement error with constant variance $\sigma^2$. We compute the median curve using the mode depth with the noiseless sample (the benchmark median) and with the noisy samples, without smoothing. Next we estimate the median curve using the smoothed curves. For smoothing, we use, on the one hand, 
$LP(\hKT)$ with $\hKT$ and the bandwidth delivered by Algorithm \ref{alg:reg1}, and, on the other hand, the \textbf{\textsf{R}} package \texttt{np} with the bandwidth selected by CV for each curve separately. We compare the mean integrated squared error (IMSE) of the smoothing-based median curves  with respect to the benchmark. We also compare the IMSE obtained by our approach with that obtained with the median curves computed from the noisy samples directly. Finally, we compare the computing times for our method and that using CV. The results obtained from 500 replications, with $\sigma^2=1$, are reported in Figure \ref{fig:median_1}.  The accuracy of the estimates, obtained in a small fraction of a second using our smoothing method, is slightly lower than that obtained by CV, which requires far more computation time. Ignoring the noise in the sample leads to very large errors for the median curve estimation. 

 \begin{figure}
\centering
\includegraphics[height=5.7cm, width=12.5cm]{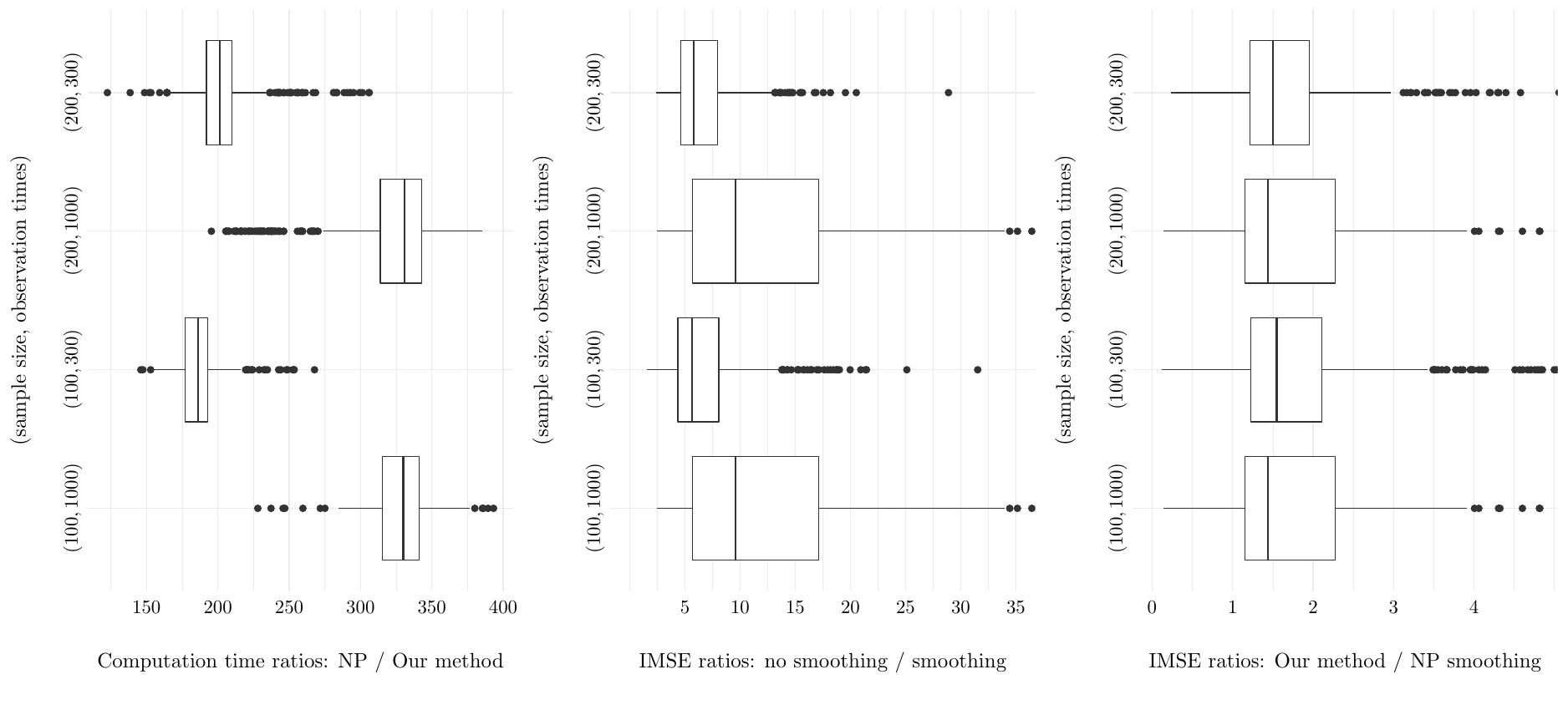}
\caption{\small Median curve estimation from samples of $N\in\{100,200\}$ noisy curves generated from Model 4 of  \cite{romo2009}, observed at $M\in\{300,1000\}$ points. The noise variance is $\sigma^2=1$. Plotted ratios:  computation times with  CV and our method (left); IMSE for the median curve without smoothing and with smoothing using method (middle); IMSE with our method and CV (right).}
\label{fig:median_1}
\end{figure}

\subsection{Real data analysis: the NGSIM Study}

In this section, our method is applied to data from the Next Generation Simulation (NGSIM) study, which aims to ``describe the interactions of multimodal travelers, vehicles and highway systems'', see \cite{halkias_ngsim_2006}. This study is known to be one of the largest publicly available source of naturalistic driving data. This dataset is widely used in traffic flow studies from the interpretation of traffic phenomena such as congestion to the validation of models for trajectories prediction (see \emph{e.g.} \cite{dong_intention_2017,hu_framework_2018,wulfe_real-time_2018,mercat_inertial_2019,henaff_model-predictive_2019} for some recent references). However, such data have been proved to be subject to measurement errors revealed by physical inconsistency between the space traveled, velocity and acceleration of the vehicles, \emph{cf.} \cite{punzo_estimation_2009}. Montanino and Punzo \cite{montanino_making_2013} developed a  trajectory-by-trajectory four-steps method to recover the signals from the noisy curves, and their methodology is now considered as a benchmark in the traffic flow engineering community for analyzing NGSIM data. The steps, finely tuned for the NGSIM data,  are~: 1. removing the outliers; 2. cutting off the high- and medium-frequency responses in the speed profile; 3. removing the residual nonphysical acceleration values, preserving the consistency requirements; 4. cutting off the high- and medium-frequency responses generated from step 3. The detailed description of these steps is  provided in the Appendix. 

To compare our smoothed curves to those of \cite{montanino_making_2013},
we consider the following ratio:
\begin{equation}\label{eq:ratio_monta}
r (\widehat{X}, \widetilde{X}) = \frac{\sum_{m = 1}^{M_n}\left[Y^{(n)}_m- \widehat{X}(T_m^{(n)}) \right]^2}{\sum_{m = 1}^{M_n}\left[Y^{(n)}_m- \widetilde{X}(T_m^{(n)}) \right]^2}, \quad 1 \leq n \leq 1714,
\end{equation}
where
$\widehat{X}$ denotes our curve estimation while  $\widetilde{X}$ is that obtained by \cite{montanino_making_2013}. 
A value of the ratio 
$r (\widehat{X}, \widetilde{X}) $
less than $1$ indicates smoothed values closer to the observations.

For our illustration, we consider a subset of the NGSIM dataset, known as the I-80 dataset. It contains $45$ minutes of trajectories for vehicles on the Interstate 80 Freeway in Emeryville, California, segmented into three 15-minute periods (from 4:00 p.m. to 4:15 p.m.; from 5:00 p.m. to 5:15 p.m. and from 5:15 p.m. to 5:30 p.m.) on April 13, 2005 and corresponds to different traffic conditions (congested, transition between uncongested and congested and fully congested). In total, the dataset contains trajectories, velocities and accelerations for  $\N0=1714$ individual vehicles that passed through this highway during this period, recorded every $0.1$\si{\second}. The number $M_n$ of measurements for each curve varies from 165 to 946. We focus on the velocity variable and rescale the measurement times for each of the 1714 velocity  curves such that  the first velocity measurement  corresponds to $t = 0$ and the last one to $t = 1$. Figure \ref{fig:data_desc} presents a sample of five curves from this data. It can easily be noticed that the velocities are quite erratic and their variation is not physically realistic, indicating the presence of a noise. Moreover, the data have been recorded at a moment of the day when traffic is evolving, it goes from fluid to dense traffic. Therefore, we consider that there are three groups in the data: a first group corresponding to a  fluid (high-speed) traffic, a second one for in-between fluid and dense  traffic, and a third groups corresponding to the dense (low-speed) traffic. To determine the three clusters,  we fit a finite Gaussian mixture model to the vector of number of sampling points. The model is estimated by an EM algorithm initialized by hierarchical model-based agglomerative clustering as proposed by Fraley and Raftery \cite{fraley_model_2002} and implemented in the \textbf{\textsf{R}} package \texttt{mclust} \cite{scrucca_mclust_2016}. The optimal model is then selected according to BIC. The three resulting classes have 239, 869 and 606 velocity trajectories, respectively. Plots of randomly selected subsamples of trajectories from each groups are provided in the Appendix. 
The respective numbers of measures $M_n$  are plotted in Figure \ref{fig:data_desc_b}. The mean estimates $\widehat \mu$ obtained in the three groups are $218, 474$ and $684$, respectively, and the corresponding values $\widehat K_0$, as defined as in Corollary \ref{cor:enfin}, are $13, 17$ and $20$. 

\begin{figure}
    \centering
    \includegraphics[scale=0.37]{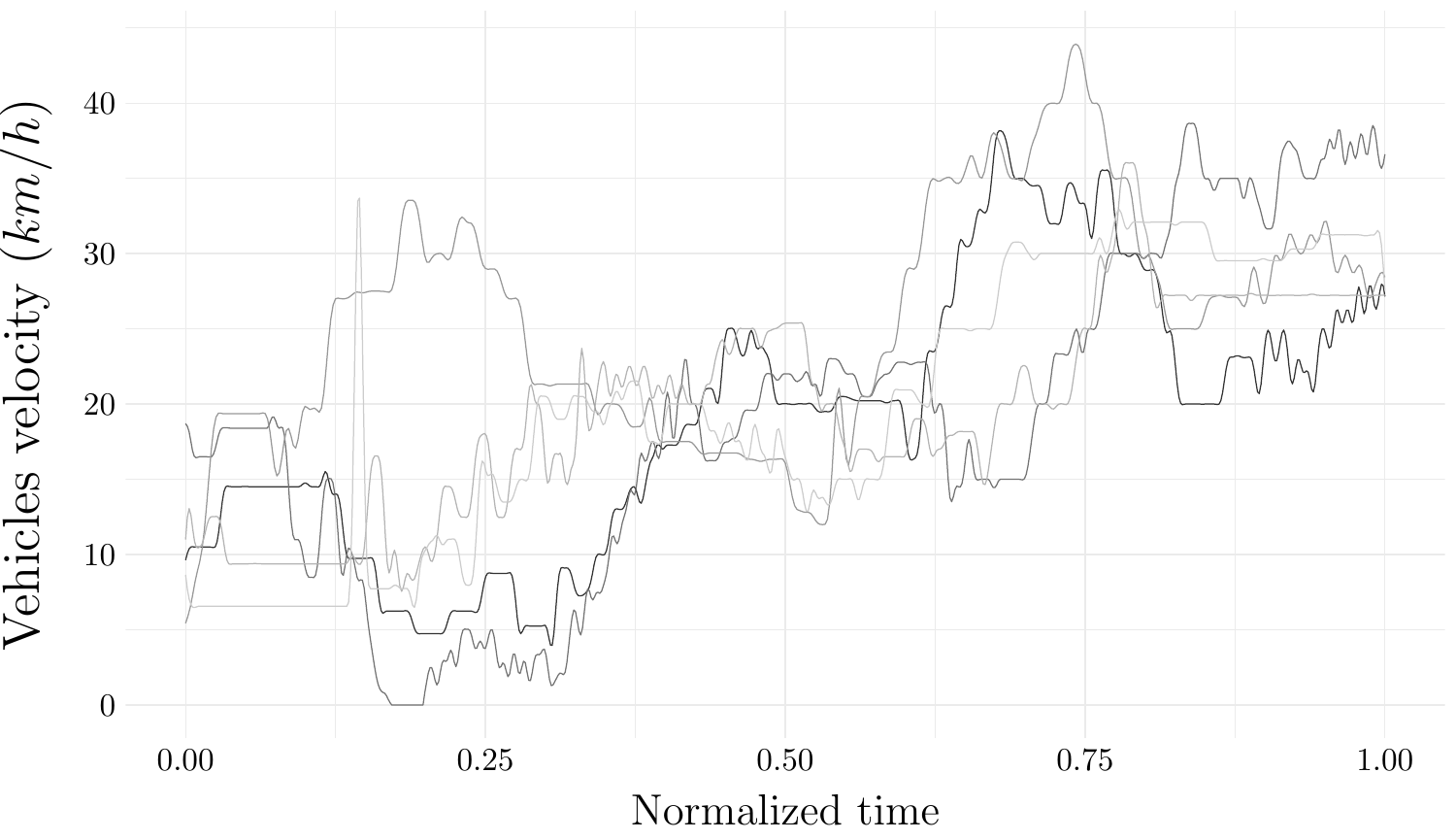}
    \caption{I-80 dataset illustration: a sample of five velocity curves.}
    \label{fig:data_desc}
\end{figure}

\begin{figure}
    \includegraphics[scale=0.37]{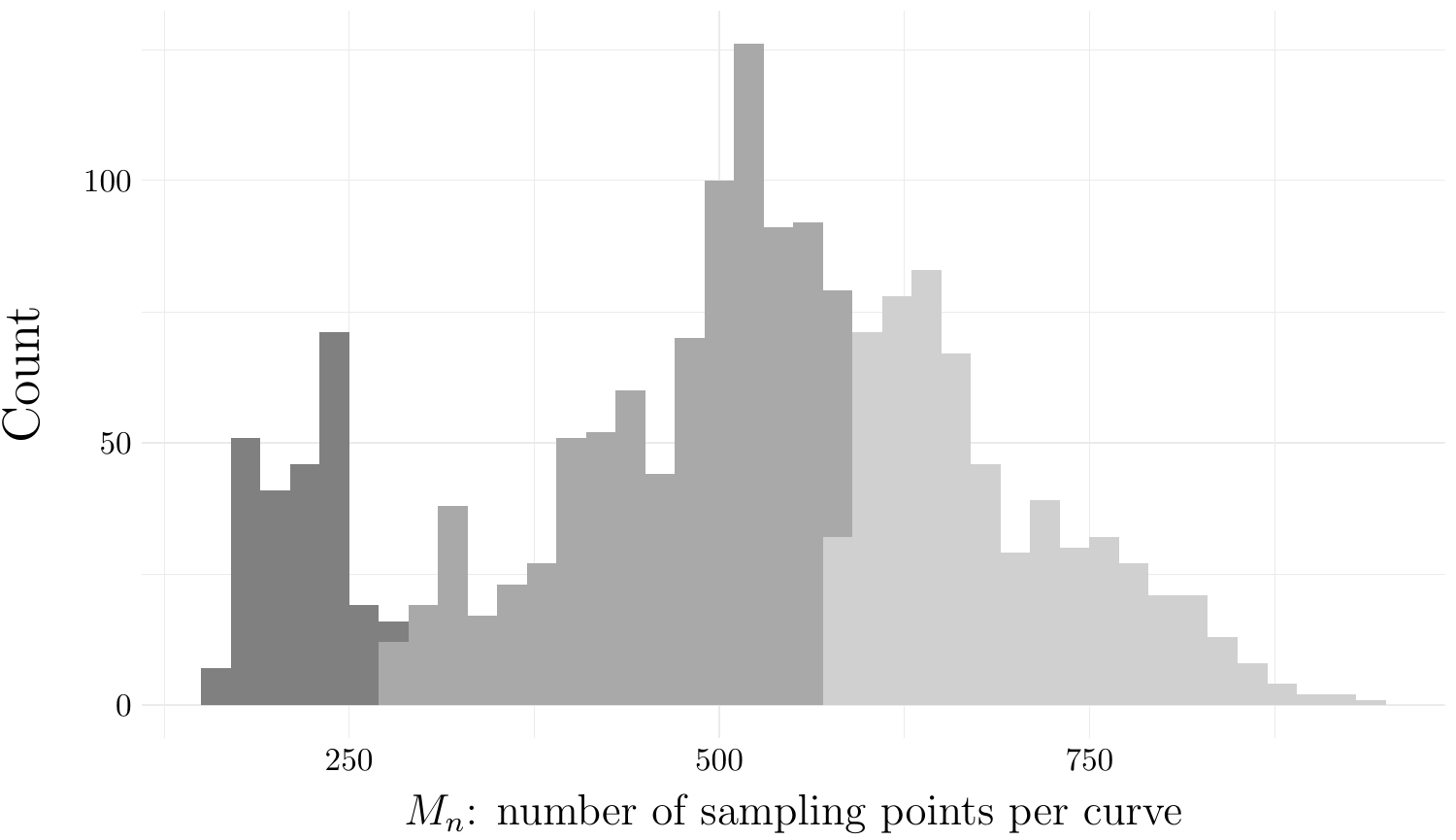}
    \caption{I-80 dataset clusters: density of sampling points for fluid (darkest gray), in-between fluid and dense, and dense traffic (lightest gray).}
    \label{fig:data_desc_b}
\end{figure}

Figure \ref{fig:H_ngsim} presents the results of the estimation of $\ST$ for values of $\T$ from $0.2$ to $0.8$, for each group. 
The evolution of $\ST$ is quite smooth, except for Group 1 (Figure \ref{fig:H_group1}).  A possible explanation could be the small number of curves and the average of $M_n$ in this group, which correspond to low values of $\N0$ and $\widehat\mu$. 
We also provide the estimation of the regularity using the whole sample of size 1714. The differences we notice between the estimates of $\ST$ from different groups support our preliminary clustering step. 

To compute the curve estimate we adopt a leave-one-curve-out procedure: each curve is smoothed using the local regularity estimates computed from the other curves in the group (or the other 1713 curves when the data is not split into groups). The densities of the resulting ratios $r (\widehat{X}, \widetilde{X}) $ are plotted in Figure \ref{fig:density_ngsim}. When the traffic is fluid and the speed is high (group 1), our method perform much better than that of Montanino and Punzo. When the traffic is dense with low speed (group 3), the smoothed values obtained with the two methods are more similar, though our method still exhibits better performance for the majority of the curves. 

\begin{figure*}
    \centering
    \begin{subfigure}[b]{0.475\textwidth}
        \centering{}
        \includegraphics[width=\textwidth]{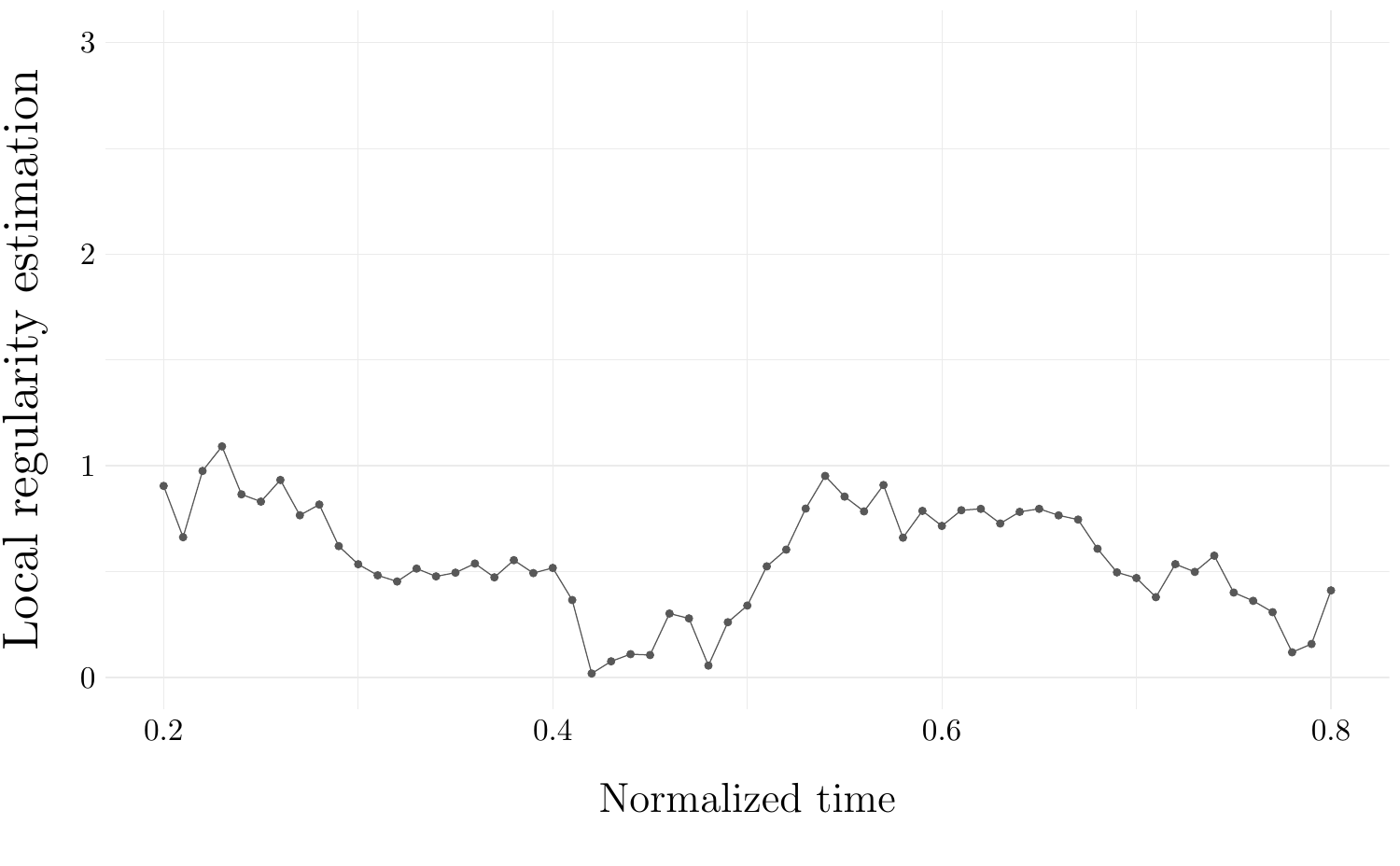}
         \vspace{-0.6cm}
        \caption{Complete sample}
        \label{fig:H_group0}
    \end{subfigure}
    \quad
    \begin{subfigure}[b]{0.475\textwidth}
        \centering
        \includegraphics[width=\textwidth]{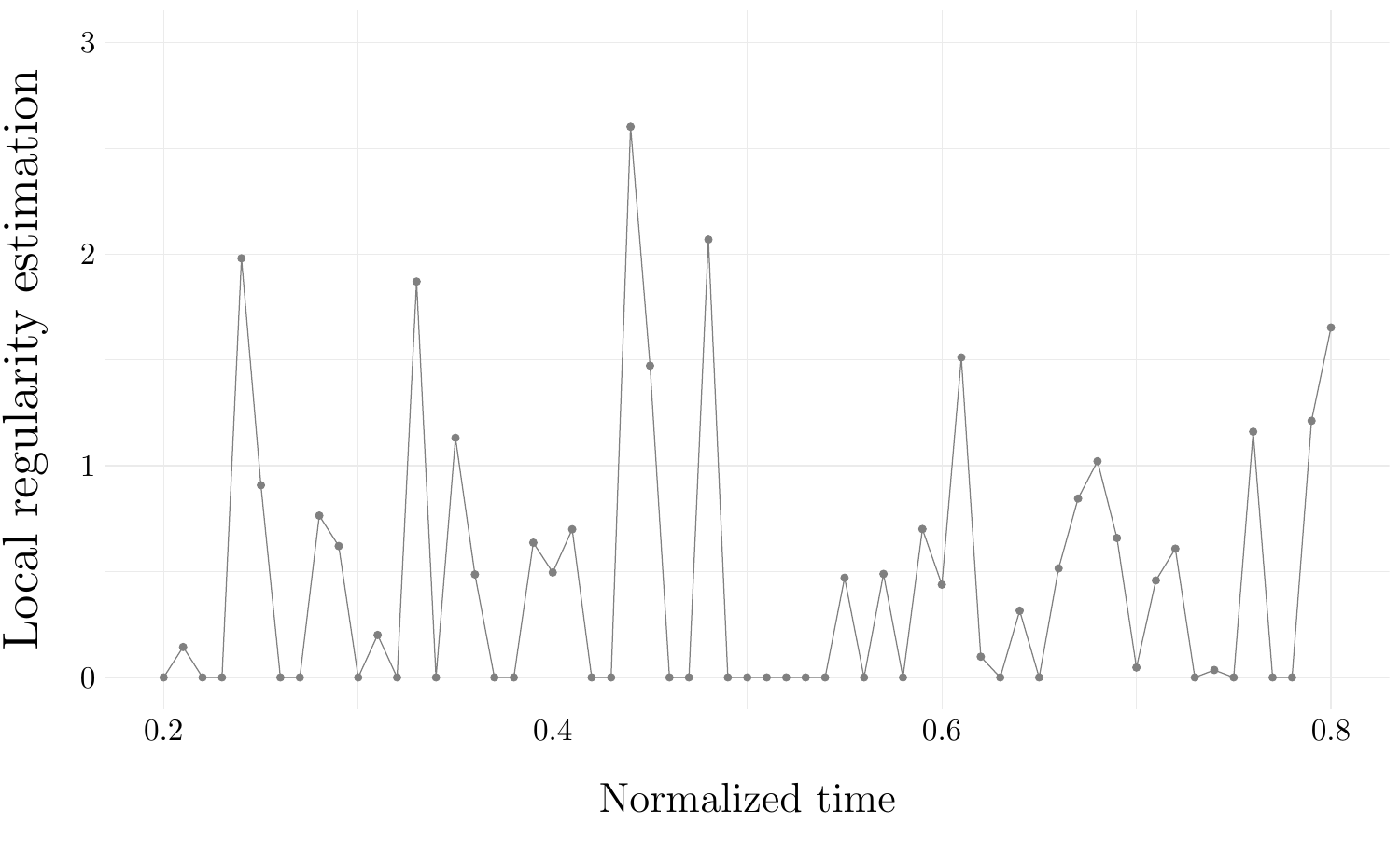}
         \vspace{-0.6cm}
        \caption{Fluid traffic/high velocity subsample}
        \label{fig:H_group1}
    \end{subfigure}
    \\
    \begin{subfigure}[b]{0.475\textwidth}
        \centering{}
        \includegraphics[width=\textwidth]{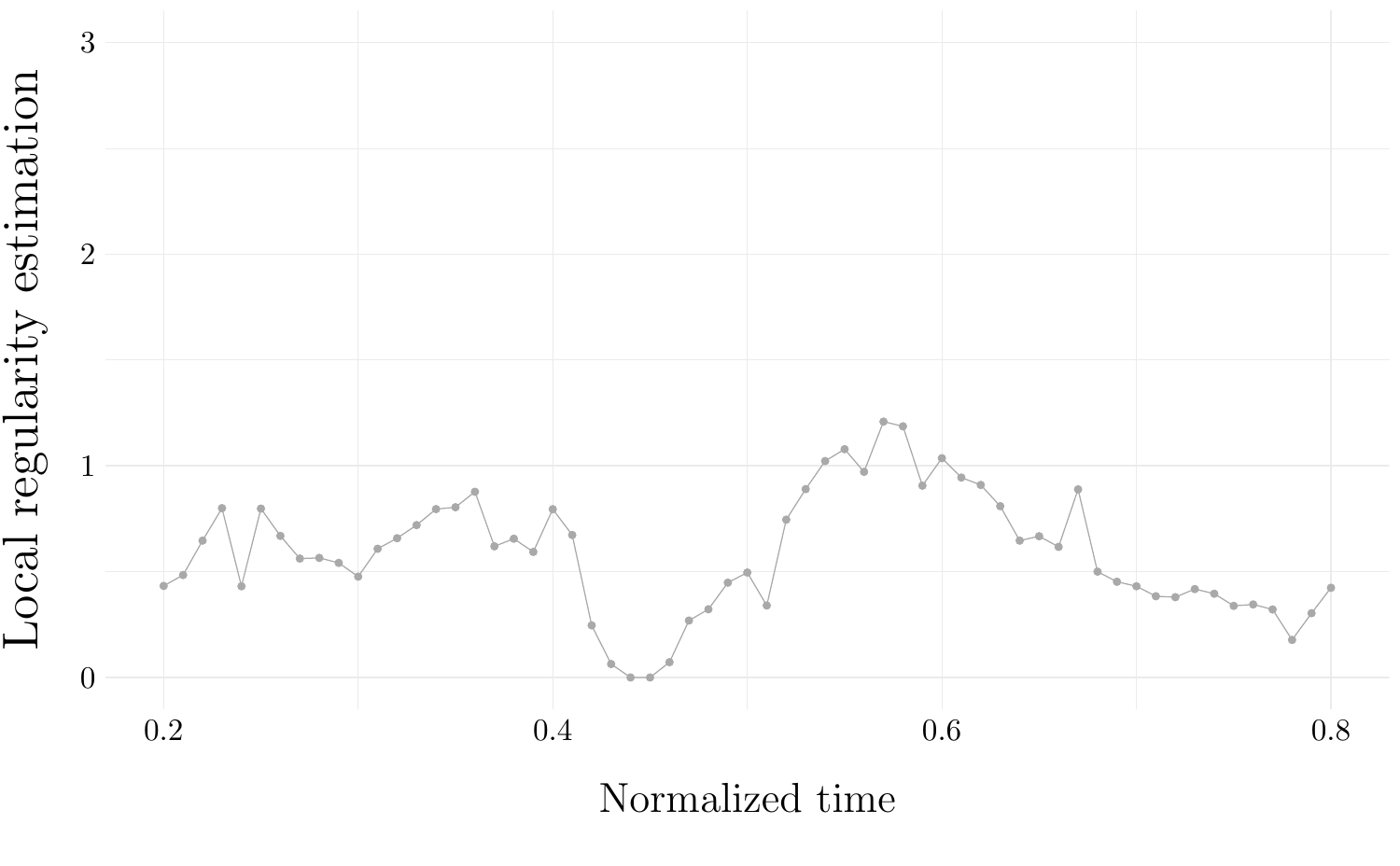}
         \vspace{-0.6cm}
        \caption{In-between group subsample}
        \label{fig:H_group2}
    \end{subfigure}
    \quad
    \begin{subfigure}[b]{0.475\textwidth}
        \centering
        \includegraphics[width=\textwidth]{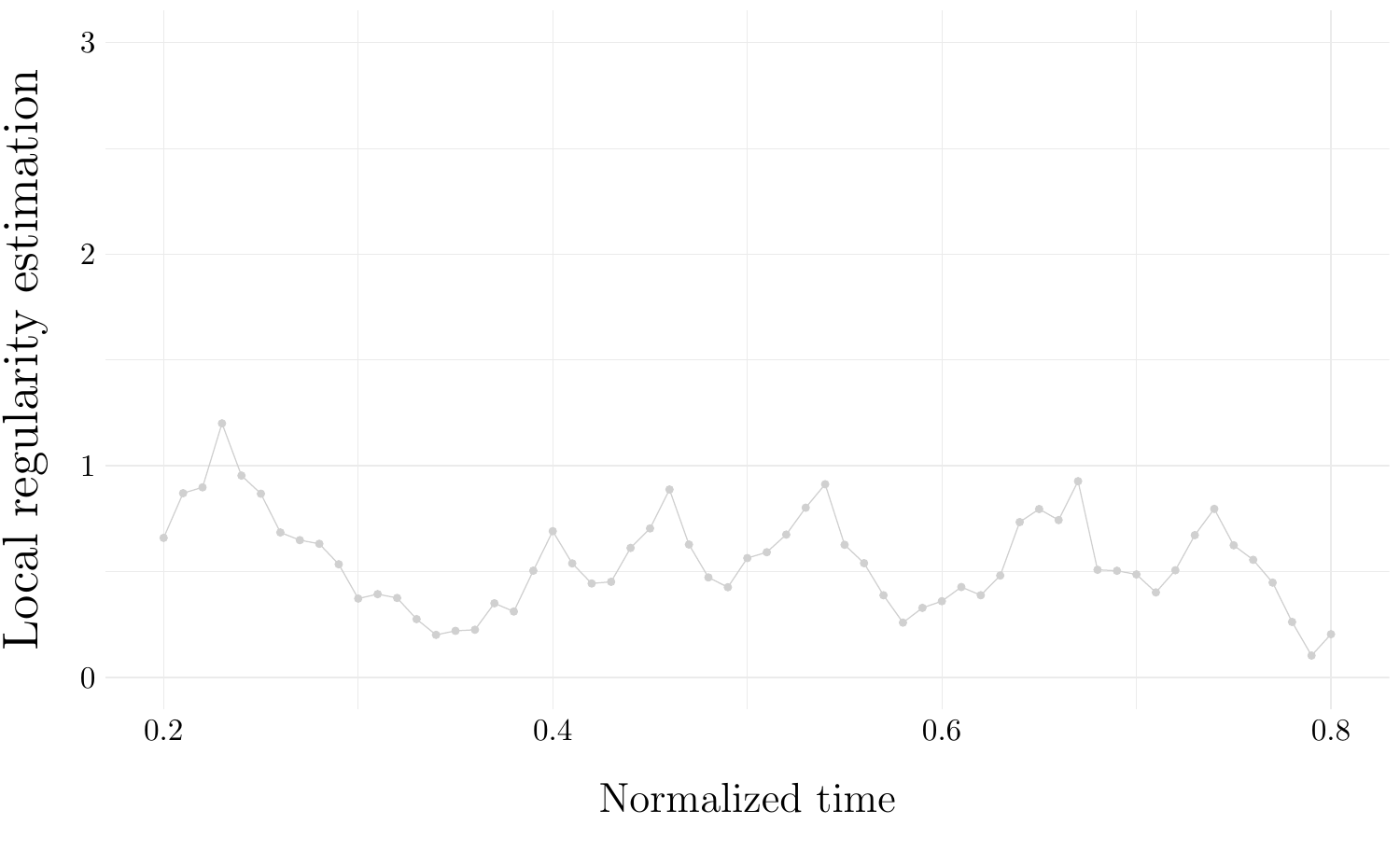}
         \vspace{-0.6cm}
        \caption{Dense traffic/low velocity subsample}
        \label{fig:H_group3}
    \end{subfigure}
    \caption{Estimation of the local regularity of the velocity curves for different $t_0$.}
    \label{fig:H_ngsim}

\end{figure*}

\begin{figure*}
    \centering
    \begin{subfigure}[b]{0.475\textwidth}
        \centering{}
        \includegraphics[width=\textwidth]{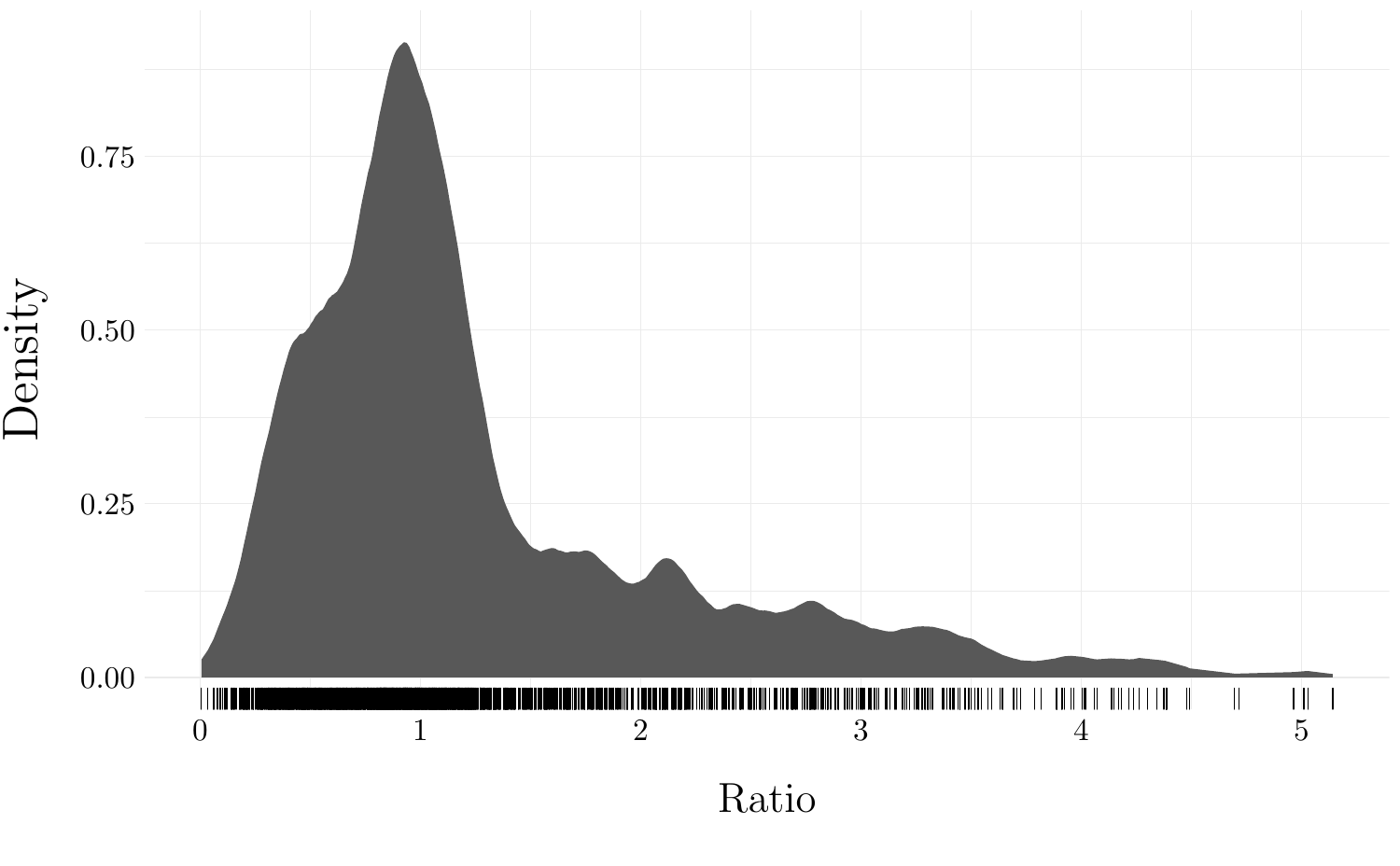}
      \vspace{-0.6cm}
        \caption{Complete sample}
        \label{fig:group0}
    \end{subfigure}
    \quad
    \begin{subfigure}[b]{0.475\textwidth}
        \centering
        \includegraphics[width=\textwidth]{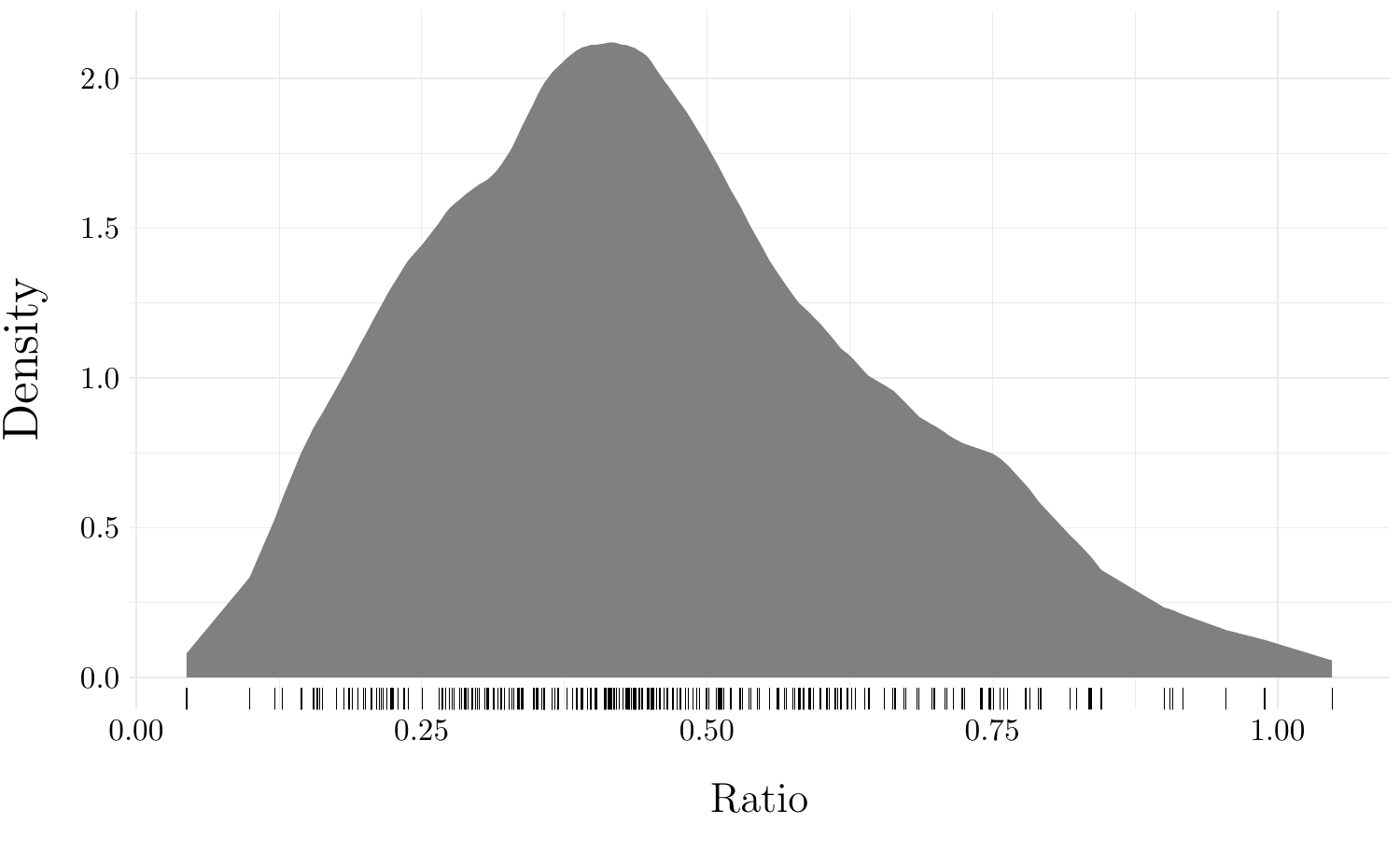}
        \vspace{-0.6cm}
        \caption{Fluid traffic/high velocity subsample}
        \label{fig:group1}
    \end{subfigure}
    \\
    \begin{subfigure}[b]{0.475\textwidth}
        \centering{}
        \includegraphics[width=\textwidth]{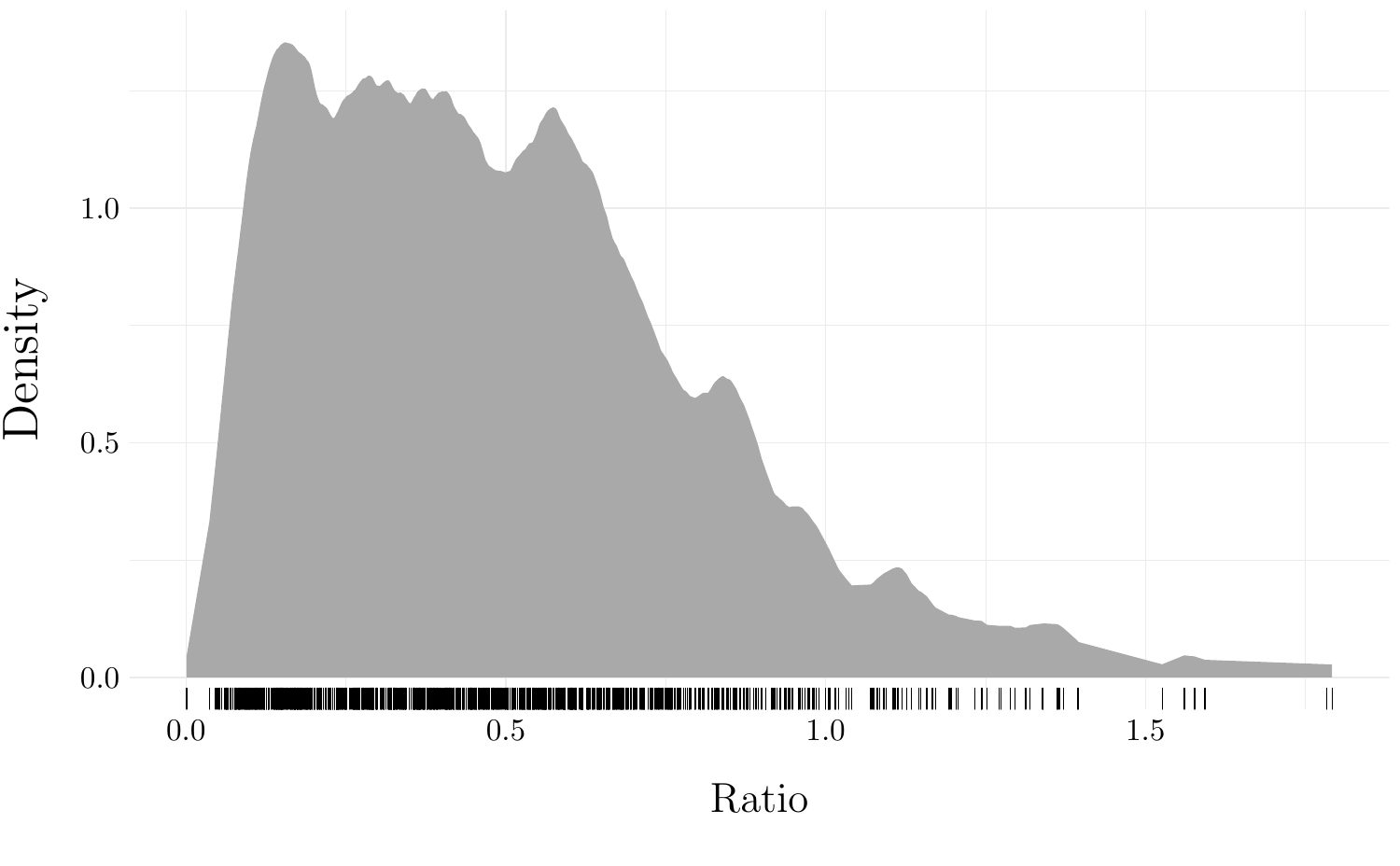}
        \vspace{-0.6cm}
        \caption{In-between group subsample}
        \label{fig:group2}
    \end{subfigure}
    \quad
    \begin{subfigure}[b]{0.475\textwidth}
        \centering
        \includegraphics[width=\textwidth]{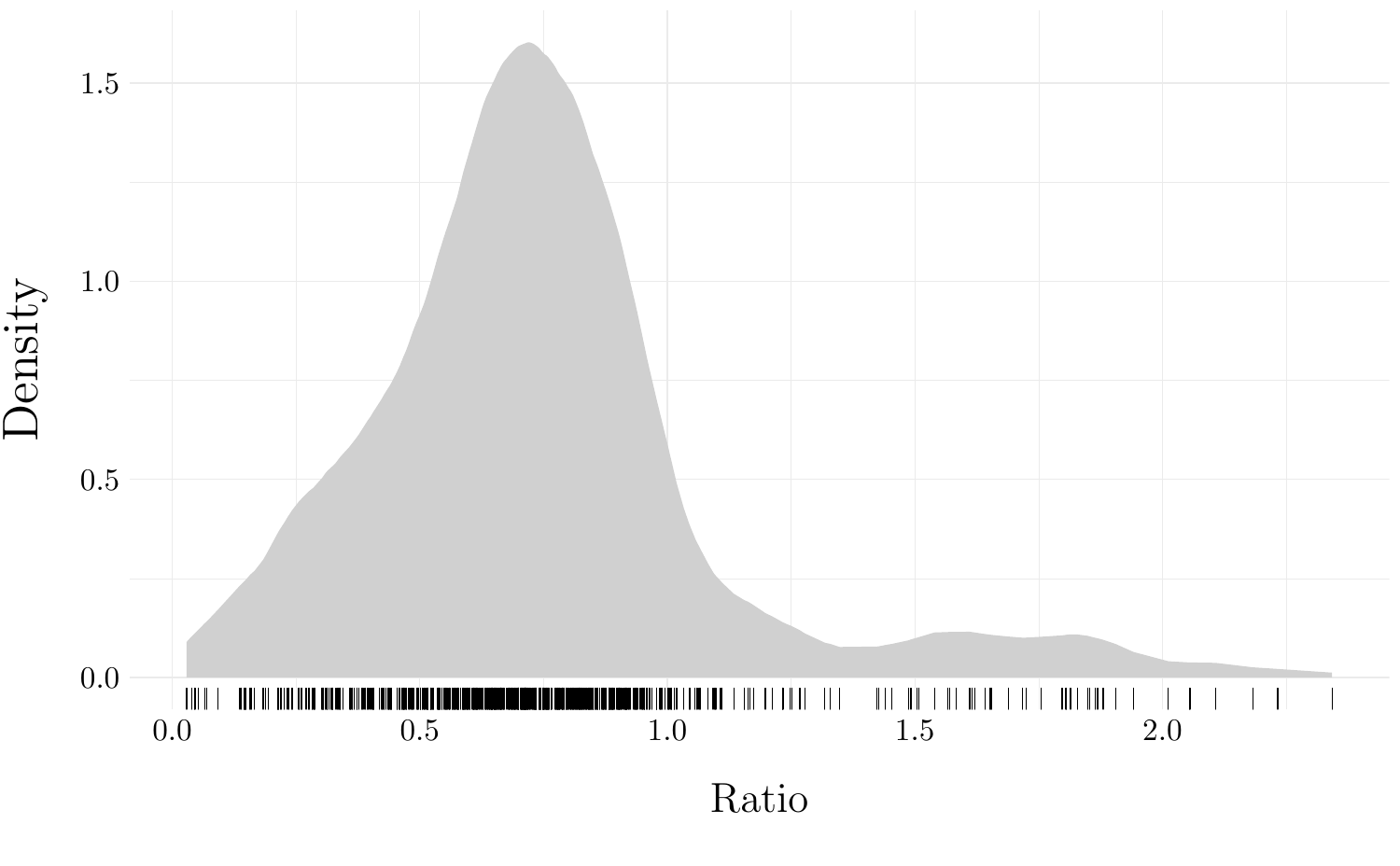}
        \vspace{-0.6cm}
        \caption{Dense traffic/low velocity subsample }
        \label{fig:group3}
    \end{subfigure}
    \caption{Densities of the ratio $r (\widehat{X}, \widetilde{X})  $ within different groups.}
    \label{fig:density_ngsim}
\end{figure*}

\begin{appendix} 

\section{Proof of Theorem~\ref{thm:Ht0}} \label{app:th_Ht0}

The proof of Theorem~\ref{thm:Ht0} is based on several lemmas that we present in the following. For these lemmas, we implicitly assume that the conditions of Theorem~\ref{thm:Ht0} are satisfied. 

\begin{lemma}\label{lem:Xl-Xk}
    Let $r$ be an integer such that
    \begin{equation*}
        (\mu+1)^{\frac{\beta_f(2\HT+\beta_\phi)}{4+\beta_f(2\HT+\beta_\phi)}}
        \leq  8 r
        \leq K_0.  
    \end{equation*}
    Let $\mathfrak{s}\in\{1,2,4,8\}$ and let $1 \leq k,l \leq K_0$ be such that $l-k = \mathfrak{s}r$.
    Then, for sufficiently large $\mu$, we have 
    \begin{multline*}
        \left|\EEB\!\left[\big|X_{T_{(l)}}\!-X_{T_{(k)}}\big|^{2}\right]\! - L_\T^2\!\left( \frac{l - k}{f(\T)(\mu+1)} \right)^{2\HT} \!\right|
        \\\leq  \mathfrak{c}\!\left( \frac{l - k}{f(\T)(\mu+1)} \right)^{2\HT+\min(\beta_\phi, \beta_f \HT /2)}\!,
    \end{multline*}
    where $\mathfrak{c} = \max(2L_\phi, \mathfrak{c}_1)$ and $\mathfrak{c}_1$ is a constant depending on $\HT$, $\beta_\phi$, $L_f ,\beta_f $ and $f(\T)$.
\end{lemma}

\begin{proof}[Proof of Lemma~\ref{lem:Xl-Xk}]
Note that, by the definition of $\EEB$, elementary properties of the conditional expectation, and Assumption~\assrefH{ass:L2},
\begin{align*}
    \EEB\left[\big|X_{T_{(l)}}-X_{T_{(k)}}\big|^{2}\right]
    &=\EE\left[\big|X_{T_{(l)}}-X_{T_{(k)}}\big|^{2}\1_{\mathcal{B}}\right]\\
    &=\EE\left\{\EE\left[\big|X_{T_{(l)}}-X_{T_{(k)}}\big|^{2}\1_{\mathcal{B}} \middle| M,T_{1},\dotsc, T_{M}\right]\right\}\\
    &= \EE\left\{\EE\left[\big|X_{T_{(l)}}-X_{T_{(k)}}\big|^{2} \middle| M,T_{1},\dotsc, T_{M}\right]\1_{\mathcal{B}}\right\}\\
    &= \EEB\left\{L_{t_0}^{2}|T_{(l)}-T_{(k)}|^{2\HT} \left[1+\phi_{\T}(T_{(k)},T_{(l)})\right]\right\}\\
    &=: (I) + (II),
\end{align*}
where $ (I)= L_{t_0}^{2}\EEB\left\{|T_{(l)}-T_{(k)}|^{2\HT} \right\}$.

By Lemma~\ref{lem:Tl-Tk_main} applied with $\alpha = 2\HT \leq 2$,
\begin{equation}\label{eq:reste1}
    (I) = L_{t_0}^{2}\left( \frac{l - k}{f(\T)(\mu+1)} \right)^{2\HT} (1+R_1)
\end{equation}
with
\begin{equation}
    |R_1| \leq \mathfrak{c}_1(2\HT)\left( \frac{l - k}{f(\T)(\mu+1)} \right)^{\beta_f \HT/2}.
\end{equation}
On the other hand, Assumption~\assrefH{ass:L2} implies that
\begin{equation*}
    |(II)| \leq L_{t_0}^{2}L_\phi\EEB\left(|T_{(l)}-T_{(k)}|^{2\HT+\beta_\phi}\right),
\end{equation*}
and using again Lemma~\ref{lem:Tl-Tk_main} with $\alpha = 2\HT + \beta_\phi\leq 3$ we obtain
\begin{equation}\label{eq:reste2}
    |(II)| \leq 2L_{t_0}^{2}L_\phi \left( \frac{l - k}{f(\T)(\mu+1)} \right)^{2\HT+\beta_\phi},
\end{equation}
for $\mu$ large enough such that $\mathfrak{c}_1(2\HT+ \beta_\phi)\leq 1$. Then, from~\eqref{eq:reste1} and~\eqref{eq:reste2}  we obtain
\begin{equation*}
    \EEB\left[\big|X_{T_{(l)}}-X_{T_{(k)}}\big|^{2}\right] = L_{t_0}^{2}\left( \frac{l - k}{f(\T)(\mu+1)} \right)^{2\HT} (1+R),
\end{equation*}
where $R$ is a remainder term such that, for sufficiently large $\mu$,
\begin{align*}
    |R| 
    &\leq 
    \max\left\{ 
    2L_\phi \left( \frac{l - k}{f(\T)(\mu+1)} \right)^{\beta_\phi},
    \mathfrak{c}_1(2\HT)\left( \frac{l - k}{f(\T)(\mu+1)} \right)^{\beta_f \HT/2}
    \right\}\\
    &\leq \mathfrak{c} \left( \frac{l - k}{f(\T)(\mu+1)} \right)^{\min(\beta_\phi,\beta_f \HT/2)},
\end{align*}
with $\mathfrak{c} = \max(2L_\phi, \mathfrak{c}_1(2\HT))$ with $\mathfrak{c}_1(\cdot)$ defined in Lemma \ref{lem:Tl-Tk_main}.
\end{proof}

For the sake of readability, we state below a technical lemma on the moments of the spacings $T_{(l)}-T_{(k)}$, for which the proof is given in the Appendix \ref{sec:tech_lem}. In Lemma \ref{lem:Tl-Tk_main}, we consider that $\mu$ is sufficiently large to ensure $(\mu+1)^{\beta_f\alpha/(4+\beta_f\alpha)} +1 \leq  \mu/\{2\log(\mu)\}$.

\begin{lemma}\label{lem:Tl-Tk_main}
    Let 
    $0<\alpha\leq3$ be a fixed parameter and let $r$ be an integer such that 
    \begin{equation*}
        (\mu+1)^{\frac{\beta_f\alpha}{4+\beta_f\alpha}}
        \leq  8 r
        \leq K_0 
        \quad\text{with}\quad
        K_0 \leq\frac{\mu}{2\log(\mu)}.
    \end{equation*}
    Let $\mathfrak{s}\in\{1,2,4,8\}$ and let $1 \leq k,l \leq K_0$ be such that $l-k = \mathfrak{s}r$.
    Then, for sufficiently large $\mu$, 
    \begin{equation*}
        \left|  \EEB\left[\big|T_{(l)}-T_{(k)}\big|^{\alpha}\right] - \left( \frac{l - k}{f(\T)(\mu+1)} \right)^{\alpha} \right|
        \leq  \mathfrak{c}_1 \left( \frac{l - k}{f(\T)(\mu+1)} \right)^{\alpha(1+\beta_f/4)},
    \end{equation*}
    with $\mathfrak{c}_1=\mathfrak{c}_1 (\alpha) = 8\mathfrak{c}_0 \{2f(\T)\}^{\beta_f\alpha/4}$ and $\mathfrak{c}_0$ a constant depending on $\alpha$, $L_f ,\beta_f $ and $f(\T)$.
\end{lemma}

\medskip

Lemma 2 is a generic result that will be applied with two values $\alpha \in(0,3]$, that are $\alpha =2H_{t_0}$ and $\alpha =2H_{t_0}+\beta_\phi$.

\medskip

\begin{lemma}\label{lem:hattheta-theta}
    Let $k$ be a positive integer such that $2 k - 1\leq K_0$.
    Then for any $\eta >0$,
    \begin{align*}
        q_k(\eta):= \max\left\{ \PP(\hat\theta_k - \theta_k \geq  \eta), \;\;\PP(\hat\theta_k - \theta_k \leq  -\eta) \right\} \leq \exp\left( -\mathfrak{e}N_0\eta^2 \right),
    \end{align*}
    where, using the notations introduced in Assumptions~\assrefH{ass:Lp} and~\assrefH{ass:eps},
    \begin{equation*}
        \mathfrak{e} = 1/(2\mathfrak{d}+2\mathfrak{D})
        \quad\text{with}\quad
        \mathfrak{d} = 27\big(
        \mathfrak{a}
+2\mathfrak{b}
        \big)
        \quad\text{and}\quad
        \mathfrak{D} = 9\max(\mathfrak{A}, 
\mathfrak{B})
        .
    \end{equation*}
\end{lemma}

\begin{proof}[Proof of Lemma \ref{lem:hattheta-theta} ]
By the definition in \eqref{eq:eventA} and \eqref{eq:def-hattheta}, 
\begin{equation*}
    \hat\theta_k = \frac{1}{\N0}\sum_{n=1}^{\N0}
    Z_n
    \qquad\text{where}\qquad
    Z_n = \big[Y^{(n)}_{(2k-1)}-Y^{(n)}_{(k)}\big]^{2}\mathbf{1}_{\mathcal{B}_n},
\end{equation*}
and
$
    \mathcal{B}_n
    =
    \left\{M_n \geq K_0, T^{(n)}_{(1)} \in J_\mu(\T), \dotsc,  T^{(n)}_{(K_0)} \in J_\mu(\T) \right\}.
$
Note that $\EE(\hat\theta_k) = \theta_k$. Moreover, for any $p\geq 2$, using Assumptions~\assrefH{ass:Lp} and~\assrefH{ass:eps}, we have
\begin{align*}
    \EE\big(|Z_n|^p\big)
    &= \EEB\big(|Y_{(2k-1)}-Y_{(k)}|^{2p}\big)\\
    &\leq 3^{2p-1} \EEB\left( |X_{T_{(2k-1)}} - X_{T_{(k)}}|^{2p} + |\varepsilon_{(2k-1)}|^{2p} + |\varepsilon_{(k)}|^{2p}  \right)\\
    &\leq 3^{2p-1}  \frac{p!}2 \left( 
    \mathfrak{a}
    \mathfrak{A}^{p-2} 
    + 2
    \mathfrak{b}
    \mathfrak{B}^{p-2}
    \right)\\
    &\leq  \frac{p!}2 \mathfrak{d} \mathfrak{D}^{p-2},
\end{align*}
where $\mathfrak{d}$ and $\mathfrak{D}$ are defined in the statement of this lemma. Bernstein's inequality implies
\begin{equation}\label{jan7}
     \PP(\hat\theta_k-\theta_k \geq  \eta) 
    \leq \exp\left( -\frac{N_0 \eta^2}{2\mathfrak{d}+2\mathfrak{D}\eta} \right)
    \leq \exp\left( -\mathfrak{e}N_0 \eta^2 \right),
\end{equation}
and the same bound is valid for $\PP(\hat\theta_k-\theta_k \leq - \eta)$. The  bound for $  q_k(\eta)$ follows. 
\end{proof}

\begin{lemma}\label{cor:hattheta-theta}
    Let $2 \leq k<l\leq (K_0+1)/2$ be two positive integers. For any $\eta>0$, define
    \begin{equation*}
        p_{k,l}^+(\eta) = 
        \PP(\hat\theta_l - \hat\theta_k \geq (1+\eta) (\theta_l - \theta_k))
        \quad\text{and}\quad
        p_{k,l}^-(\eta) = 
        \PP(\hat\theta_l - \hat\theta_k \leq (1-\eta) (\theta_l - \theta_k)).
    \end{equation*}
    Then, for sufficiently large $\mu$,
    \begin{equation*}
        \max\left\{p_{k,l}^+(\eta),p_{k,l}^-(\eta)  \right\} \leq 
        2\exp\left[ -\frac{\mathfrak{e}}{16} N_0 \eta^{2}\left( \frac{l-k}{\mu+1} \right)^{4\HT}\right],
    \end{equation*}
    with $\mathfrak{e}$ defined in Lemma \ref{lem:hattheta-theta}.
\end{lemma}

\begin{proof}[Proof of Lemma \ref{cor:hattheta-theta}]
Assume that $k$ and $l$ satisfy the assumptions stated in the Lemma and assume moreover that $\mu$ is large enough so that $\eta(\theta_l-\theta_k)/2 < 1$. Then
\begin{align*}
    p_{k,l}^+ (\eta)
    &= \PP\big[(\hat\theta_l  -\theta_l) - (\hat\theta_k - \theta_k) \geq \eta (\theta_l - \theta_k)\big]\\
    &\leq \PP\big[\hat\theta_l  -\theta_l \geq \eta (\theta_l - \theta_k)/2\big] + \PP\big[\hat\theta_k  -\theta_k \leq -\eta (\theta_l - \theta_k)/2\big]\\
    &\leq q_l\left( \eta (\theta_l - \theta_k)/2 \right) + q_k\left( \eta (\theta_l - \theta_k)/2 \right),
\end{align*}
and the same bound is valid for $p_{k,l}^- (\eta)$. 
By~\eqref{eq:theta:2k-1:k}, we have
$$
    \theta_{l} - \theta_k
    \geq 
 \left\{(l-k)/(\mu+1) \right\}^{2\HT} /2,
$$ 
provided  $\mu$ is sufficiently large.  We obtain the bound for $ \max(p_{k,l}^+(\eta),p_{k,l}^-(\eta)) $ after 
applying Lemma \ref{lem:hattheta-theta}.
\end{proof}

\begin{proof}[Proof  of Theorem~\ref{thm:Ht0}]
Let $\epsilon>0$. With the notation  from \eqref{eq:proxy} and~\eqref{eq:hatH}, we can write 
\begin{multline*}
\PP\left( \big|\hHT(k) - \HT\big| > \varepsilon \right)
\leq 
    \mathbf 1_{\left\{ \left|\HT(k) - \HT\right| > \epsilon/2 \right\} }
    + \PP\left( \big|\widehat H(k) - \HT(k)\big| > \varepsilon/2 \right) \\\eqqcolon B+V,
\end{multline*}
and thus it suffices to bound the terms $B$ and $V$.

\textbf{The term $B$.} 
The study of the set in the indicator function boils down to the study of the convergence of $\HT(k)$ to $\HT$.
Using Lemma~\ref{lem:Xl-Xk} with  $l-k = k-1 =  r$  we have
$$
    \theta_k - 2\sigma^2
    = \EEB\left[\left(X_{T_{(2k-1)}}-X_{T_{(k)}}\right)^2\right]
    = L_\T^2\left( \frac{k-1}{f(\T)(\mu+1)} \right)^{2\HT} (1+\rho_k),
$$
and 
\begin{equation*}
    |\rho_k| \leq 
    \mathfrak{c}\left( \frac{k-1}{f(\T)(\mu+1)} \right)^{\min(\beta_\phi, \beta_f \HT /2)}
    \eqqcolon \rho_k^*,
\end{equation*}
with $\mathfrak{c}$ a constant defined in Lemma~\ref{lem:Xl-Xk}. Using  again Lemma~\ref{lem:Xl-Xk} with $k=2k-1$, $l=4k-3$ and $\mathfrak{a}=2$ and taking the difference, we deduce that there exists $R_k$ such that
\begin{align}
    \theta_{2k-1} - \theta_k
    &= L_\T^2\left( \frac{2(k-1)}{f(\T)(\mu+1)} \right)^{2\HT} (1+\rho_{2k-1})
    \\&\qquad - L_\T^2\left( \frac{k-1}{f(\T)(\mu+1)} \right)^{2\HT} (1+\rho_k)\nonumber\\
    &= (4^{\HT}-1)L_\T^2 \left( \frac{k-1}{f(\T)(\mu+1)} \right)^{2\HT}
    \left( 1+ R_k\right)\label{eq:theta:2k-1:k},
\end{align}
where 
\begin{equation*}
    |R_k| = \left| \frac{4^{\HT}\rho_{2k-1} - \rho_{k}}{4^{\HT}-1}  \right| =
    \leq \frac{4^{\HT}+1}{4^{\HT}-1} \rho_{2k-1}^*
    \leq \frac{4^{\HT}+1}{4^{\HT}-1} \rho_{K_0}^*.
\end{equation*}
Similarly,  we obtain~:
\begin{equation}\label{eq:theta:4k-3:2k-1}
    \theta_{4k-3} - \theta_{2k-1}
    = (4^{\HT}-1)L_\T^2 \left( \frac{2(k-1)}{f(\T)(\mu+1)} \right)^{2\HT}
    \left( 1+ R_{2k-1}\right).
\end{equation}
Combining~\eqref{eq:theta:2k-1:k} and~\eqref{eq:theta:4k-3:2k-1}, we obtain
\begin{equation*}
    \log(\theta_{4k-3} - \theta_{2k-1}) 
    -\log(\theta_{2k-1} - \theta_k) 
    = \HT \log4 + \log(1+R_{2k-1}) - \log(1+R_{k}),
\end{equation*}
which leads, using the definition of $\HT(k)$ given by~\eqref{eq:proxy}, to:
\begin{equation*}
    \HT(k) = \HT + \eta_k
    \qquad\text{where}\qquad
    \eta_k = \frac{\log(1+R_{2k-1}) - \log(1+R_{k})}{2\log 2}.
\end{equation*}
Note that, for sufficiently large $\mu$, both $R_k$ and $R_{2k-1}$ are greater that $-1/2$. This implies 
\begin{equation*}
    |\eta_k| 
    \leq \frac{|R_{2k-1} - R_k|}{\log2} 
    \leq \left(\frac{2}{\log2}\frac{4^{\HT}+1}{4^{\HT}-1}\right) \rho_{4k-3}^*.
\end{equation*}
Thus, since $ \rho_{4k-3}^* \leq  \rho_{K_0}^*$, the condition $\big|\HT(k) - \HT\big| > \varepsilon/2$ fails and $B=0$ as soon as 
\begin{equation*}
    \epsilon > 
    \left(\frac{4}{\log2}\frac{4^{\HT}+1}{4^{\HT}-1}\right) \rho_{K_0}^*,
\end{equation*}
that is as soon as condition \eqref{eq:cdt_epsi} is satisfied, provided $\mu$ is sufficiently large.

\textbf{The term $V$.} Defining the event
$
    \mathcal{D} = \{\hat\theta_{4k-3}>\hat\theta_{2k-1}>\hat\theta_{k}\},
$
we can write
\begin{equation}\label{eq:second-term}
    \PP\left( \big|\widehat H(k) - \HT(k)\big| > \epsilon/2 \right) 
    \leq 
    \PP\left( \big|\widehat H(k) - \HT(k)\big| > \epsilon/2, \mathcal{D} \right) + \PP(\overline{\mathcal{D}}).
\end{equation}
First note that using Lemma~\ref{cor:hattheta-theta} we have, for sufficiently large $\mu$~:
\begin{align}
    \PP(\overline{\mathcal{D}})
    &\leq \PP(\hat\theta_k \geq  \hat\theta_{2k-1}) 
    + \PP(\hat\theta_{2k-1} \geq  \hat\theta_{4k-3})\nonumber\\
    &\leq
    p_{2k-1, k}^-(1) + p_{4k-3, 2k-1}^-(1)
    \leq
    4\exp\left[ -\frac{\mathfrak{e}}{16} N_0 \left( \frac{k-1}{\mu+1} \right)^{4\HT}\right]\label{eq:control-dbar}.
\end{align}
Now, it remains to bound the quantity
\begin{align*}
    \wp &= 
    \PP\left( \big|\widehat H(k) - \HT(k)\big| > \epsilon/2, \mathcal{D} \right) \\
    &=\PP\left[\left|\log\left( \frac{\hat\theta_{4k-3}-\hat\theta_{2k-1}}{\theta_{4k-3}-\theta_{2k-1}} \times\frac{\theta_{2k-1}-\theta_{k}}{\hat\theta_{2k-1}-\hat\theta_{k}}\right)
    \right| > \epsilon\log2, \mathcal{D} \right].
\end{align*}
Since both $\hat\theta_{4k-3}-\hat\theta_{2k-1}$ and $\hat\theta_{2k-1}-\hat\theta_{k}$ are positive under $\mathcal{D}$, we have
\begin{align*}
    \wp 
    &\leq 
    \begin{aligned}[t]
        \PP&\left[\frac{\hat\theta_{4k-3}-\hat\theta_{2k-1}}{\theta_{4k-3}-\theta_{2k-1}} \times\frac{\theta_{2k-1}-\theta_{k}}{\hat\theta_{2k-1}-\hat\theta_{k}} > 2^\epsilon, \mathcal{D} \right]\\
        &+
        \PP\left[\frac{\hat\theta_{4k-3}-\hat\theta_{2k-1}}{\theta_{4k-3}-\theta_{2k-1}} \times\frac{\theta_{2k-1}-\theta_{k}}{\hat\theta_{2k-1}-\hat\theta_{k}} < 2^{-\epsilon}, \mathcal{D} \right]
    \end{aligned}\\
    &\leq
    \begin{aligned}[t]
        \PP&\left[\frac{\hat\theta_{4k-3}-\hat\theta_{2k-1}}{\theta_{4k-3}-\theta_{2k-1}} > 2^{\frac\epsilon2} \right]
        +
        \PP\left[\frac{\hat\theta_{2k-1}-\hat\theta_{k}}{\theta_{2k-1}-\theta_{k}} < 2^{-\frac\epsilon2} \right]\\
        &+\PP\left[\frac{\hat\theta_{4k-3}-\hat\theta_{2k-1}}{\theta_{4k-3}-\theta_{2k-1}} < 2^{-\frac\epsilon2} \right]
        +
        \PP\left[\frac{\hat\theta_{2k-1}-\hat\theta_{k}}{\theta_{2k-1}-\theta_{k}} > 2^{\frac\epsilon2} \right].
    \end{aligned}
\end{align*}
Applying Lemma~\ref{cor:hattheta-theta}, we obtain~:
\begin{multline*}
    \wp
    \leq
    p_{4k-3, 2k-1}^+(2^{\frac\epsilon2}-1)
    +p_{4k-3, 2k-1}^-(1-2^{-\frac\epsilon2})
    +p_{2k-1, k}^+(2^{\frac\epsilon2}-1)
    \\+p_{2k-1, k}^-(1-2^{-\frac\epsilon2}).
\end{multline*}
Now remark that
\begin{align*}
    p_{2k-1, k}^+(2^{\frac\epsilon2}-1)
    &\leq
    2\exp\left[ -\frac{\mathfrak{e}}{16} N_0 \left( 2^{\frac\epsilon2}-1 \right)^2
    \left( \frac{k-1}{\mu+1} \right)^{4\HT}\right]\\
    &\leq 
    2\exp\left[ -\frac{\mathfrak{e}\log^2(2)}{64} N_0 \epsilon^2
    \left( \frac{k-1}{\mu+1} \right)^{4\HT}\right],
\end{align*}
and, as soon as $\epsilon<2/\log2$, we have
$1-2^{-\epsilon/2} \leq \epsilon/4$, which implies:
\begin{align*}
    p_{2k-1, k}^-(1-2^{-\frac\epsilon2})
    &\leq
    2\exp\left[ -\frac{\mathfrak{e}}{16} N_0 \left( 1-2^{-\frac\epsilon2} \right)^2
    \left( \frac{k-1}{\mu+1} \right)^{4\HT}\right]\\
    &\leq 
    2\exp\left[ -\frac{\mathfrak{e}\log^2(2)}{256} N_0 \epsilon^2
    \left( \frac{k-1}{\mu+1} \right)^{4\HT}\right].
\end{align*}
Using similar derivations for the others terms, we obtain:
\begin{equation}\label{eq:control-wp}
    \wp
    \leq
    8\exp\left[ -\frac{\mathfrak{e}\log^2(2)}{256} N_0 \epsilon^2
    \left( \frac{k-1}{\mu+1} \right)^{4\HT}\right].
\end{equation}
Combining~\eqref{eq:second-term} with~\eqref{eq:control-dbar} and~\eqref{eq:control-wp}, we obtain, for sufficiently large $\mu$ and $\epsilon < 2/\log2$:
\begin{equation*}
    \PP\left( \big|\widehat H(k) - \HT(k)\big| > \epsilon/2 \right)
    \leq 12
    \exp\left[ -\mathfrak{f} N_0 \epsilon^2
    \left( \frac{k-1}{\mu+1} \right)^{4\HT}\right],
\end{equation*}
where
  $  \mathfrak{f} = \mathfrak{e}\log^2(2)/ 256.$
\end{proof}

\section{Proofs of Theorems~\ref{thm:exponential-bound} and \ref{thm:local-poly}} \label{app:th-23}

The proofs of Theorems~\ref{thm:exponential-bound} and \ref{thm:local-poly} are based on the following lemmas for which the proofs are provided in the Appendix \ref{sec:tech_lem}. 
For the first lemma we consider the matrix  $A$ defined in~\eqref{eq:Anstar} with the bandwidth
 $\widehat h = M^{-1/(2\hST +1)}$. Let $\lambda$ be the smallest eigenvalue of this matrix. Let 
\begin{equation}\label{eq:AAmatrix}
    \mathbf{A} = f(\T) \int_\RR U(u) U^\top(u) K(u) du,
\end{equation}
and let $\lambda_0$ denote its smallest eigenvalue. In the following, we assume that $K(\cdot)$ satisfies~\eqref{eq:bounded-kernel}. Then $\mathbf{A}$ is positive definite \citep[see][for details]{tsybakov2009} and thus $\lambda_0>0$. 


\begin{lemma}\label{lem:eigenval}
  Let $K(\cdot)$ be a kernel such that, for any $t\in\RR$:
    \begin{equation}
        \label{eq:bounded-kernel_SM}
        \kappa^{-1}\1_{[-\delta, \delta]}(t)  \leq K(t) \leq \kappa \1_{[-1,1]}(t) ,\quad \text{for some } 0<\delta<1 \text{ and } \kappa\geq1.
    \end{equation}  
    Under Assumptions \assrefLP{ass:M2},  \assrefLP{ass:hatHt} and  \assrefLP{ass:hatkt0}, the matrix $A$ defined as in \eqref{eq:Anstar}, with $h=\widehat h$, is positive semidefinite. Moreover, there exists a positive constant $\mathfrak{g}$ that depends only on $K$, $\degree$, $f(\T)$
    and $\lambda _0$ such that, for $M$ sufficiently large,
    \begin{equation}\label{eq:E-beta}
        \PP( \lambda \leq \beta | M) 
        \leq
        2\exp(-\mathfrak{g} M\widehat h),\quad \forall 0<\beta\leq \lambda_0/2,
    \end{equation}
    and, for sufficiently large  $\mu$,
    \begin{equation}\label{eq:E2-beta}
      \sup_{0<\beta\leq \lambda_0/2}   \PP(\lambda \leq \beta) \leq \mathfrak{K}_2 \exp\left[-\frac{\mathfrak{g}}{2}\tau(\mu)\log^2(\mu)\right]
    \end{equation}  
    where 
    \begin{equation}
    \tau(\mu) = \frac{1}{ \log^{2}(\mu)}
    \left(\frac{\mu}{\log (\mu) }\right)^{\frac{2\ST}{2\ST+1}},
    \end{equation}
   with $\ST = \KT + \HT$.      Here, 
   $\mathfrak{K}_2$ is a universal constant.
\end{lemma}

Since the dimension of $A$ and $\mathbf A$ are given by $\hKT$, the probability $\PP(\cdot)$ in  Lemma \ref{lem:eigenval}  should be understood as the conditional probability given the estimator $\hKT$. 

\begin{lemma}\label{lem:technical-exp}
    Let $\xi$ be a positive random variable such that
    $
        c_1 \coloneqq \EE\left[ \exp\left( \eta_0 \xi^4 \right) \right]  < \infty,
    $
    for some positive constant $\eta_0$. Then, for any $\tau\geq 1$:
    \begin{equation*}
        \EE\left[ \exp\left( \tau\xi \right) \right]
        \leq
        c_1 \exp\left(c_2\tau^{4/3}\right)
        \qquad\text{where}\qquad
        c_2 = \left( 5/16\eta_0 \right)^{1/3}.
    \end{equation*}
\end{lemma}
    
\begin{proof}[Proof of Theorem~\ref{thm:exponential-bound}]
Without loss of generality, we could suppose that $f(\T)/2\leq f(t)\leq 2 f(\T)$, $\forall t\in J_\mu(\T)$.
We define the events 
\begin{equation}\label{eq:events-EF}
    \mathcal{E}= \{\lambda > \lambda_0/2\},  \quad \mathcal{F} = \big\{|\hHT - \HT| \leq \log^{-2}(\mu)\big\}\cap \left\{\hKT = \KT \right\},
\end{equation}
and 
$
  {\mathcal{G}} = \{ \left|X_\T\right| \leq \tau^{\tilde \alpha }(\mu)\},
  $
with $1/3< \tilde \alpha < \alpha=5/12$. 
Next, let $Z= \left| \widehat{X}_{t_0} - X_{t_0} \right|$. Assume that $\mu$ is such that $\log^{-1}(\mu/\log(\mu))<\lambda_0/2$, then using Assumption~\assrefH{ass:M2}, we have: 
\begin{align*}
    \EE[\varphi(\tau(\mu) Z^2)]
    &= (A) + (B) + (C) + (D),
\end{align*}
where $\varphi(x) = \exp(x^{1/4})$ and
\begin{align*}
    (A) &= \EE\left[\varphi\big(\tau(\mu) Z^2\big)\1_{\mathcal{E}}\1_{\mathcal{F}}\1_{\mathcal{G}}\right]\\
    (B) &= \EE\left[\varphi\big(\tau(\mu) Z^2\big) \1_{\overline{\mathcal{E}}}\right]\leq \EE^{1/2}\left[\varphi^2\big(\tau(\mu) Z^2\big) \right]\PP^{1/2}({\overline{\mathcal{E}}})\\
    (C) &= \EE\left[\varphi\big(\tau(\mu) Z^2\big) \1_{\overline{\mathcal{F}}}\right] \leq \EE^{1/2}\left[\varphi^2\big(\tau(\mu) Z^2\big) \right]\PP^{1/2}({\overline{\mathcal{F}}}) \\
     (D) &= \EE\left[\varphi\big(\tau(\mu) Z^2\big) \1_{\overline{\mathcal{G}}}\right]  \leq \EE^{1/2}\left[\varphi^2\big(\tau(\mu) Z^2\big) \right]\PP^{1/2}({\overline{\mathcal{G}}}).
\end{align*}
We show that $(A)$ is the main term, and it is bounded by a constant. 

By construction, $\left|\hat X_\T\right| \leq \tau^{ \alpha }(M)$ and, by \assrefLP{ass:M2}, 
$$
\tau^{ \alpha }(M) \geq \tau^{ \alpha }(\mu/\log (\mu)) \geq  \tau^{\tilde \alpha }(\mu),
$$
provided that $\mu$ is sufficiently large. Thus, for sufficiently large $\mu$,
$$
\left| \hat X_{\T} - X_{\T}\right| \1_{{\mathcal{G}}} \leq \left|U^\top(0) \hat{\vartheta} - X_{\T}\right| \1_{{\mathcal{G}}}\leq \left|U^\top(0) \hat{\vartheta} - X_{\T}\right| ,
$$
with  $\hat{\vartheta} = A^{-1} {a}$ and $A$ and $a$ defined in \eqref{eq:Anstar} and \eqref{eq:anstar}, respectively. 
Therefore,  
\begin{equation*}
    \sqrt{\tau(\mu)} Z \1_{{\mathcal{G}}} \leq \sqrt{\tau(\mu)} \left|\sum_{m = 1}^M (X_{T_m} - X_{\T})W_m\right| + \sqrt{\tau(\mu)}\left|\sum_{m = 1}^{M}\varepsilon_mW_m\right|,
\end{equation*}
where
\begin{equation*}
    W_m = \frac1{Mh} U^\top(0) A^{-1} 
    U\left( \frac{T_m - \T}{h} \right)
    K\left( \frac{T_m - \T}{h} \right).
\end{equation*}
This leads to
\begin{multline*}
    \left(\sqrt{\tau(\mu)} Z\right)^{1/2}  \1_{{\mathcal{G}}} \leq \left(\sqrt{\tau(\mu)} \left|\sum_{m = 1}^M (X_{T_m} - X_{t_0})W_m\right|\right)^{1/2} + \\\left(\sqrt{\tau(\mu)}\left|\sum_{m = 1}^{M}\varepsilon_mW_m\right|\right)^{1/2}.
\end{multline*}
Then, to show that $(A)$ 
is finite, it suffices to show that   
\begin{equation*}
    A_1 = \EE\left[
    \exp\left\{\left(4\sqrt{\tau(\mu)}\left|\sum_{m = 1}^M (X_{T_m} - X_{t_0})W_m\right|\right)^{1/2}\right\}\1_{\mathcal{E}}\1_{\mathcal{F}}  
    \right],
\end{equation*}
and
\begin{equation*}
    A_2 = \EE\left[
    \exp\left\{\left(4\sqrt{\tau(\mu)}\left|\sum_{m = 1}^{M}\varepsilon_mW_m\right|\right)^{1/2}\right\}
    \1_{\mathcal{E}}\1_{\mathcal{F}}  \right],
\end{equation*}
are finite and to apply Cauchy-Schwarz inequality. To control the stochastic term $A_2$, remark that: 
\begin{equation}\label{eq:def-A2}
    A_2 = 1+\sum_{p \geq 1} \frac{2^p[\tau(\mu)]^{p/4}}{p!}  B_p,
\end{equation}
where
\begin{equation*}
    B_p = \EE\left[\left|\sum_{m= 1}^{M}\varepsilon_m W_m\right|^{p/2}\1_{\mathcal{E}}\1_{\mathcal{F}}  \right].
\end{equation*}
By Jensen's inequality, $B_1\leq B_2^{1/2} \leq B_3^{1/3} \leq B_4^{1/4}$. Thus, it remains to control $B_p$ for any $p\geq 4$. For such values of $p$, we use Marcinkiewicz-Zygmund's inequality and obtain:
\begin{equation*}
    B_p \leq \left(\frac{p}{2}-1\right)^{p/2}
    \EE\left[ \left( \sum_{m = 1}^M \varepsilon_m^2 W_m^2 \right)^{p/4}\1_{\mathcal{E}}\1_{\mathcal{F}}  
    \right].
\end{equation*}
By a version of  Lemma 1.3 of \cite{tsybakov2009}, for $\kappa$ defined in \eqref{eq:bounded-kernel},
$$
\sup_{1\leq m\leq M}|W_m|  \1_{\mathcal{E}_{\lambda_0/2}} \leq  \frac{4\kappa}{\lambda_0 Mh} ,
$$
and
\begin{equation}\label{eq:bound-Wm-1m}
    \sum_{m = 1}^{M} |W_m|  \1_{\mathcal{E}_{\lambda_0/2}}  \leq  \frac{4\kappa}{\lambda_0 }  \frac{1}{Mh} \sum_{j = 1}^{M}\1_{ \{ \T-h \leq T_m \leq \T+h \} }.
\end{equation}
Let $\chi_m =h^{-1}\1_{ \{ \T-h \leq T_m \leq \T+h \} }$.     
By the Rosenthal inequality, there exists a universal constant $C$, such that, for any $q\geq 1$,
\begin{align}\label{eq:moments-W}
    \tilde \EE \left[  \left(\sum_{m = 1}^{M} \chi_m \right)^{q} \right] &\leq \frac{C^qq^q}{\log^q q}  \max\left\{   
    \sum_{m= 1}^{M}  \tilde \EE \chi _m^{q}   , \left(  \sum_{m= 1}^{M}  \tilde \EE \chi _m \right)^q     \right\}\\ &\leq \left(  \frac{4qC f(\T)
     M}{\log q} \right)^q .
\end{align}
See \cite{Jon85}.      
Since $W_m^2\leq |W_m | \sup_{1\leq j\leq M}|W_j|$, we deduce    
$$
\tilde\EE \left[ \left(\sum_{m = 1}^{M}W_m^2\right)^{q} \1_{\mathcal{E}} 
\right] \leq  \left( \frac{4\kappa}{\lambda_0 Mh}   \right)^q \left(  \frac{16q\kappa C
f(_T) }{\lambda_0 \log q} \right)^q =: \left( \frac{1}{Mh}   \right)^q \left(  \frac{c_0 q  }{\log q} \right)^q .
$$
Using Assumption~\assrefLP{ass:M2}, we deduce:
\begin{align}
    \EE\! \left[ \left(\sum_{m = 1}^{M}W_m^2\right)^{\!q} \1_{\mathcal{E}}\1_{\mathcal{F}}  \right] 
    &\leq
    \left(  \frac{c_0 q  }{\log q} \right)^{\!q }  
    \EE\left[ \left(\frac{\log\mu}{\mu}\right)^{q\frac{2\hHT}{2\hHT+1}} \1_{\mathcal{F}}  \right] \\
    &= 2 \left(  \frac{c_0 q  }{\log q} \right)^q \frac{1}{\left\{ \tau(\mu)\log^2\mu \right\}^q} .
\end{align}
The last line can be deduced using similar arguments to those used to obtain~\eqref{eq:hHT-HT} in the proof of Lemma \ref{lem:eigenval}. 
Next, let $\widetilde W_m = W_m^2/\sum_{j=1}^M W_j^2$. Since the error terms are independent on the $T_m$'s, and using \assrefH{ass:eps}, by Jensen's inequality
\begin{align*}
    B_p \left(\frac{p}{2}-1\right)^{-p/2} &\leq        \EE \left[ \left( \sum_{m= 1}^M |\varepsilon_m|^{p/2} \widetilde{W}_m \right)\left(\sum_{j = 1}^{M}W_j^2\right)^{p/4}\1_{\mathcal{E}}\1_{\mathcal{F}}  \right] 
    \\& \mkern-72mu=
    \EE \left\{  \EE \left[ \left( \sum_{m= 1}^M |\varepsilon_m|^{p/2} \widetilde{W}_m \right)\mid M,W_1,\ldots,W_M \right]\left(\sum_{j = 1}^{M}W_j^2\right)^{p/4}\1_{\mathcal{E}}\1_{\mathcal{F}}  \right\} \\
    &\mkern-72mu\leq 
    \left(\EE |\varepsilon|^{2p}\right)^{1/4} \EE \left[ \left(\sum_{m = 1}^{M}W_m^2\right)^{p/4} \1_{\mathcal{E}}\1_{\mathcal{F}}  \right] \\
    &\mkern-72mu\leq \left(\frac{p!}{2}\mathfrak b \mathfrak B^{p - 2}\right)^{1/4}  \left(  \frac{c_0 p  }{4\log (p/4)} \right)^{p/4} 
    {\left(\frac{1}{\tau(\mu)\log^2\mu}\right)^{p/4}}.
\end{align*}
Thus we have
\begin{align*}
    B_p &\leq \left(\frac{p}{2}-1\right)^{p/2}   \left(\frac{p!}{2}\mathfrak b \mathfrak B^{p - 2}\right)^{1/4}  \left(  \frac{c_0 p  }{4\log (p/4)} \right)^{p/4} {\left(\frac{1}{\tau(\mu)\log^2\mu}\right)^{p/4}}\\
    &=  \left(\frac{\mathfrak b}{2\mathfrak B^{2}} \right)^{1/4}  {\left(\frac{1}{\tau(\mu)\log^2\mu}\right)^{p/4}} D_p,
\end{align*}
where
$$
D_p =\left(\frac{p}{2}-1\right)^{p/2}     (p!)^{1/4}   \left(  \frac{c_0 p \mathfrak B }{4\log (p/4)} \right)^{p/4}  \leq p! \; \left(  \frac{c_1}{\log p} \right)^{p/4} ,
$$
for some constant $c_1$. 
For the last inequality, we use Stirling's formula. This implies that there exists a universal constant $c_2$ such that 
\begin{equation}\label{eq:bound-Bp}
    \frac{B_p}{p!} \leq  \left(  \frac{c_2}{\log p} \right)^{p/4}  {\left(\frac{1}{\tau(\mu)\log^2\mu}\right)^{p/4}}.
\end{equation}
Combining~\eqref{eq:def-A2} with~\eqref{eq:bound-Bp} we obtain:
\begin{align*}
    A_2 
    &=
    1+{\left\{
    2B_1\tau^{1/4}(\mu) +
    2B_2\tau^{1/2}(\mu) + 
    \frac{4B_3}3\tau^{3/4}(\mu)
    \right\}} + 
    \sum_{p \geq 4} \frac{2^p  \tau(\mu)^{p/4}}{p!}  B_p \\
    &\leq
    1+
    {\left\{
    2\big(B_4\tau(\mu)\big)^{1/4} +
    2\big(B_4\tau(\mu)\big)^{1/2} + 
    \frac{4\big(B_4\tau(\mu)\big)^{3/4}}3
    \right\}} \\&\mkern342mu + \sum_{p \geq 4} \left(  \frac{16 c_2}{\log p} \right)^{p/4} {\left( \frac{1}{\log^2\mu} \right)^{p/4}}\\
    &< \infty.
\end{align*}
The inequality on the last line comes from the fact that $B_4\tau(\mu)\log^2(\mu)$ is bounded.

To control the bias term $A_1$, let us define, for any $0<\beta<\HT$:
\begin{equation}\label{eq:lambda_RY}
    \Lambda_\beta
    = \sup_{\substack{u, v \in J_\mu(\T)\\u\neq v}}
    \frac{|X^{(\KT)}_u-X^{(\KT)}_v|}{|u-v|^\beta},
\end{equation}
where here $X^{(\KT)}_u$ denotes the $\KT$-th derivative of the trajectory $X_u$.
Applying Taylor's formula and using the basic properties satisfied by the weights $W_m$, we obtain:
\begin{align*}
    \left| \sum_{m = 1}^M X(T_m) W_m - X_\T \right|
    &\leq \left|\sum_{m=1}^M \sum_{k=1}^{\KT} \frac{X^{(k)}(\T)}{k!} (T_m-\T)^k W_m\right| \\
    &\qquad + \sum_{m=1}^M \frac{\left|X^{(\KT)}(\T)-X^{(\KT)}(\zeta_m)\right|}{\KT!}|T_m-\T|^{\KT} |W_m| \\
    &\leq \frac{\Lambda_\beta}{\KT!}\sum_{m = 1}^M  |T_m  - \T|^{\KT+ \beta} |W_m|,
\end{align*}
where $|\zeta_m-\T|\leq |T_m-\T|$. Note that this result is obtained using~:
\begin{equation*}
    \sum_{m=1}^M (T_m-\T)^k W_m = 0.
\end{equation*}
Since, under $\mathcal{E}$ we have, $W_m =0$ as soon as $|T_m-\T| > h$,  :
\begin{align*}
    \left| \sum_{m = 1}^M X(T_m) W_m - X_\T \right|\1_{\mathcal{E}}
    &\leq \frac{\Lambda_\beta h^{\KT+\beta}}{\KT!} \sum_{m=1}^M |W_m|\1_{\mathcal{E}}\\
    &\leq \frac{\Lambda_\beta}{\KT!}\frac{4\kappa}{\lambda_0}
    \frac{ h^{\KT+\beta}}{Mh} \sum_{m=1}^M \1_{\{\T-h \leq T_m \leq \T+h\}}.
\end{align*}
The last line follows from \eqref{eq:bound-Wm-1m}. Moreover, combining the result obtained by \citep[][p.~27]{MR1725357}, with \assrefH{ass:Lp}, for any  $0< H_{\T}- \beta<\beta_0$ where $\beta_0$ is some sufficiently small  fixed value, we have:
\begin{align*}
    \EE\Lambda_\beta^{p/2} &
    \leq 2^{\frac{1}{4} + \frac{p}{2}(H_{t_0} + 1)}\left(\frac{1}{1 - 2^{\beta - H_{t_0}}}\right)^{p/2}\left(\frac{p!}{2}\mathfrak{a}\mathfrak{A}^{p - 2}\right)^{1/4} 
    \\ &
    \leq \frac{\mathfrak{a}^{1/4}}{\mathfrak{A}^{1/2}}(p!)^{1/4}\left(\frac{8\log 2 \sqrt{\mathfrak{A}}}{H_{t_0} - \beta}\right)^{p/2}.
\end{align*}
Since, by definition, the random variable $\Lambda_\beta$ is independent of $\hHT$, $M$ and the $T_m$'s, by the last inequality above and inequality~\eqref{eq:moments-W}, we have:
\begin{multline*}
    \EE\left(\left| \sum_{m = 1}^M X(T_m) W_m - X_\T \right|^{p/2}\1_{\mathcal{E}}\1_{\mathcal{F}}   \right)
    \\\leq  \left(  \frac{2pCf(\T)
    }{\log (p/2)} \right)^{p/2} \EE\left[\left(h^{\KT+\beta} \right)^{p/2}\right] 
    \EE\left[ \left( \Lambda_\beta \right)^{p/2} \right].
\end{multline*}
We thus obtain:
\begin{align*}
    A_1
    &\leq \sum_{p \geq 0}\frac{\big(16\tau(\mu)\big)^{p/4}}{p!}
    \EE\left(\left| \sum_{m = 1}^M X(T_m) W_m - X_\T \right|^{p/2}\1_{\mathcal{E}}\1_{\mathcal{F}}   \right) \\
    &\leq \sum_{p\geq 0}\frac{\big(16\tau(\mu)\big)^{p/4}}{p!} \left(  \frac{2pCf(\T)
    }{\log (p/2)} \right)^{p/2} \EE\left[\left(h^{\KT+\beta} \right)^{p/2}\1_{\mathcal{F}}\right] 
    \EE\left[ \left( \Lambda_\beta \right)^{p/2} \right].
\end{align*}
Note that, 
on the event $\mathcal F$,
$$
\left(h^{\KT+\beta} \right)^{p/2} \leq C^{p/2} \left(\frac{\log\mu}{\mu}\right)^{\frac{p}{2}\frac{2(\KT+\beta)}{2(\KT+\HT)+1}}.
$$
Taking $\beta = \HT - \log^{-1}\mu$, since $(\mu /\log \mu)^{1/\log \mu}$ is bounded, we deduce that, for some constant $C>0$,  
$$
A_1 \leq  \sum_{p\geq 0}   \frac{C^{p/2}}{\log^{p/2} (p)}<\infty. 
$$

It remains to control $(B)$, $(C)$ and $(D)$. For this purpose, let us first note that, by the Assumption~\assrefLP{ass:Lp2},
$
c_1:=\EE [\exp(\eta_0 X^2_\T)]<\infty,
$
for $\eta_0= 1/(2\mathfrak{A})$. 
We deduce that 
\begin{align*}
    \EE \left[\varphi^2\big(\tau(\mu) Z^2\big) \right] 
    &\leq \EE \left[\exp \big( 2 \tau^{1/4} (\mu) \left|\hat X_\T - X_\T  \right|^{1/2}  \big) \right]\\
    &\leq \EE \left[\exp  \left(2 \tau^{1/4} (\mu)\big\{|\hat X_\T|^{1/2} +\left|X_\T  \right|^{1/2}  \big\} \right) \right]\\
    &\leq \exp  \left[ 2 \tau^{1/4} (\mu)\tau^{\alpha/1} (\mu \log(\mu)) \right] \EE \left[\exp  \left(2 \tau^{1/4} (\mu)\left|X_\T  \right|^{1/2}   \right) \right]\\
    &\leq c_1 \exp  \left[ 2 \tau^{1/4} (\mu)\tau^{\alpha/2} (\mu \log(\mu)) + 2^{4/3}c_2 \tau^{1/3} (\mu)\right],
\end{align*}
where for the last inequality, we apply Lemma \ref{lem:technical-exp} with and $\xi = |X_\T|^{1/2}$, and thus $c_2=(5\mathfrak{A}/8 )^{1/3}$. 
Now notice that, using Markov's inequality
\begin{align*}
    \PP(\overline{\mathcal{G}})
    &= \PP(|X_\T| > \tau^{\tilde\alpha}(\mu)) \\
    &\leq c_1 \exp\left( -\eta_0\tau^{2\tilde\alpha}(\mu) \right).
\end{align*}
Since $\tilde\alpha<1/2$, Assumptions~\assrefLP{ass:hatHt} and~\assrefLP{ass:hatkt0} imply that for sufficiently large $\mu$:
\begin{equation*}
    \PP(\overline{\mathcal{F}})
    \leq 2 \mathfrak{K}_1\exp(-\mu)
    \leq  \exp\left( -\eta_0\tau^{2\tilde\alpha}(\mu) \right).
\end{equation*}
Moreover, Lemma~\ref{lem:eigenval} also implies that, for sufficiently large $\mu$:
\begin{equation*}
    \PP(\overline{\mathcal{E}})
    \leq \mathfrak{K}_2 \exp\left(-\frac{\mathfrak{g}}{2} \tau(\mu)\log^2(\mu)\right)
    \leq  \exp\left( -\eta_0\tau^{2\tilde\alpha}(\mu) \right).
\end{equation*} 
Finally, if $\mathcal{H}$ denotes either $\mathcal{E}$, $\mathcal{F}$ or $\mathcal{G}$, we have
\begin{multline*}
    \EE \left[\varphi^2\big(\tau(\mu) Z^2\big) \right]
    \PP(\overline{\mathcal{H}})
    \\\leq
    C  \exp  \left[ 2 \tau^{1/4} (\mu)\tau^{\alpha/2} (\mu \log(\mu)) + 2^{4/3}c_2 \tau^{1/3} (\mu) - \eta_0\tau^{2\tilde\alpha}(\mu)\right],
\end{multline*}
where $C$ denotes a positive constant. The choice $\alpha = 5/12$ and $\tilde\alpha = 9/24$ allows us to deduce that $\EE \left[\varphi^2\big(\tau(\mu) Z^2\big) \right]\PP(\overline{\mathcal{H}})$ is bounded. This concludes the proof of Theorem~\ref{thm:exponential-bound}.
\end{proof}

\begin{proof}[Proof of Theorem~\ref{thm:local-poly}]
By Theorem~\ref{thm:exponential-bound},
\begin{equation}\label{eq:max-esp}
    \max_{1\leq \n1 \leq  \N1} 
    \EE\left[\varphi\left\{
    \tau(\mu) 
    \left|\hXT1 - \XT1\right|^2
    \right\}\right] 
    \leq \Gamma_0
    \end{equation}
    where
    \begin{equation}\tau(\mu) = \frac{1}{ \log^{2}(\mu)}
    \left(\frac{\mu}{\log (\mu) }\right)^{\frac{2\ST}{2\ST+1}},
    \end{equation}
and   $\varphi(x) = \exp(x^{1/4})$. 
Now, let $x_0 = 256$ and consider $\widetilde\varphi\leq\varphi$ defined by
\begin{equation*}
    \widetilde{\varphi}(x) =
    \begin{cases}
        \varphi^\prime(x_0)(x - x_0) + \varphi(x_0) &\text{if}~ x \leq x_0 \\
        \varphi(x) &\text{if}~ x \geq x_0
    \end{cases},
\end{equation*}
and note that $\widetilde\varphi$ is  nondecreasing and convex. Then, by Lemma~1.6 in \cite{tsybakov2009}, 
\begin{equation*}
    \EE\left(\max_{1\leq \n1 \leq  \N1} 
    \left|\hXT1 - \XT1\right|^2\right) 
    \leq \tau^{-1}(\mu)\widetilde{\varphi}^{\leftarrow}(\Gamma_0 \N1),
\end{equation*}
where $\widetilde{\varphi}^{\leftarrow}$ denotes the inverse function of $\widetilde\varphi$. Moreover, for $\N1$ sufficiently large, we have $\widetilde{\varphi}^{\leftarrow}(\Gamma_0\N1) = \log^4(\Gamma_0 \N1)$. \end{proof}


\section{Proof of Theorem~\ref{thm:local-poly-sup}}

\begin{proof}[Proof of Theorem~\ref{thm:local-poly-sup}]
Assume, without loss of generality that 
$$K = \log^{4}(\N1)\exp(\log^2(\mu)),$$ 
is an integer and let $\min(I) = s_0 < s_1 < \dotsc < s_K < s_{K+1} = \max(I)$ be a regular grid of the interval~$I$. For any $t\in I$, let $k_t \in\{1,\dotsc, K\}$  be such that $|t-s_{k_t}| \leq 1/(2K-2)\eqqcolon \epsilon$. 
We have
\begin{equation*}
    \EE\left( 
        \max_{1\leq \n1 \leq \N1} 
        \sup_{t\in I} 
        \left|\hXt1 - \Xt1\right|^2
    \right)
    \leq 3 (A+B+C),
\end{equation*}
where
\begin{align*}
    A &= \EE\left(  \max_{1\leq \n1 \leq \N1} \max_{k=1,\dotsc,K}\left|\hXtemp1_{s_k} - \Xtemp1_{s_{k}}\right|^2 \right),\\ B &= \EE\left(  \max_{1\leq \n1 \leq \N1} \sup_{t\in I}\left|\hXt1 - \hXtemp1_{s_{k_t}}\right|^2 \right)\\
   C &= \EE\left(  \max_{1\leq \n1 \leq \N1} \sup_{t\in I}\left|\Xt1 - \Xtemp1_{s_{k_t}}\right|^2 \right).
\end{align*}

\textbf{Bound for $A$.} Using arguments similar to those of the proof of Theorem~\ref{thm:local-poly}, we obtain:
\begin{equation*}
    A
    \leq c\tau^{-1}(\mu)\log^4(\N1 K) 
    = 
    c \Psi(\mu, \N1),
\end{equation*}
where $c$ denotes an absolute positive constant and $\Psi$ is defined by~\eqref{eq:Psi}.

\textbf{Bound for $B$.} Note that using~\cite[][p.~45]{tsybakov2009} we obtain that there exists a positive constant $\ell$ such that, for any $1\leq\n1\leq\N1$, we have almost surely:
\begin{equation*}
    \sup_{|t-s|<\epsilon} |\hXtemp1_t - \hXtemp1_s| 
    \leq 
    \frac{\epsilon\ell}{\lambda_0 M_{\n1}h^2} \sum_{i=1}^{M_{\n1}} |Y_i^{[\n1]}|.
\end{equation*}
Using~\assrefH{ass:eps} and~\assrefLP{ass:Lp2}, this implies that there exists a positive constant $\mathfrak Y$ such that:
\begin{equation*}
    B 
    \leq \frac{\ell\mathfrak{Y}}{\lambda_0}\frac{\epsilon}{h^2}
    \leq \frac{\ell\mathfrak{Y}}{\lambda_0} \epsilon\mu^2
    \ll  \Psi(\mu,\N1).
\end{equation*}

\textbf{Bound for $C$.} Set $0<\eta<H/2$ and, for any $1\leq\n1\leq\N1$, define the random variable:
\begin{equation*}
    \Lambda_{\n1} = \sup_{s\neq t\in I} \frac{|\Xtemp1_t- \Xtemp1_s|}{|t-s|^\eta}.
\end{equation*}
We have:
\begin{align*}
    C \leq \EE\left(  \max_{1\leq \n1 \leq \N1} \sup_{|t-s|\leq \epsilon}\left|\Xt1 - \Xtemp1_{s}\right|^2 \right)
    \leq \epsilon^{2\eta} \EE\left( \max_{1\leq \n1 \leq \N1}  \Lambda_{\n1}^2\right).
\end{align*}
It remains to bound the last expectation. By~\assrefH{ass:Lp}, we have, for any $1\leq\n1\leq\N1$ and any $p\geq 2$:
\begin{equation*}
    \EE\left( \Lambda_{\n1}^{2p} \right) 
    \leq \left( \frac{p!}{2} \mathfrak{a}\mathfrak{A}^{p-2} \right) |I|^{2p(H-\eta)} 
    = p!\mathfrak{M}^{p} ,
\end{equation*}
where $|I|$ denotes the length of $I$ and
\begin{equation*}
    \mathfrak{M}=\max\left[\frac{\mathfrak{a}}2|I|^{4(H-\eta)},\mathfrak{A}|I|^{2(H-\eta)},1\right].   
\end{equation*}
Thus, we have
\begin{equation*}
    \EE\left[ \exp\left(\frac{\Lambda_{\n1}^2}{2\mathfrak{M}} \right) \right]\leq 2
    \qquad\text{which implies}\qquad
    \EE\left( \max_{1\leq \n1 \leq \N1}  \Lambda_{\n1}^2\right) 
    \leq
    2\mathfrak{M}\log(2\N1). 
\end{equation*}
We finally obtain that:
\begin{equation*}
    C \leq 2\mathfrak{M}\log(2\N1) \epsilon^{2\eta} \ll \Psi(\mu, \N1).
\end{equation*}
Gathering the bounds, we deduce that:
\begin{equation*}
    \EE\left( 
        \max_{1\leq \n1 \leq \N1} 
        \sup_{t\in I} 
        \left|\hXt1 - \Xt1\right|^2
    \right)
    \leq 3c \Psi(\mu, \N1)\{1+o(1)\}.
\end{equation*}
\end{proof}
%
%
%


\section{Alternative local regularity estimator}\label{subsec:golo_new}

\subsection{Main assumptions}

In this section we propose an alternative approach to estimate the regularity $\ST = \KT + \HT$ of the process $X$ without any restriction on $\KT\in\NN$. The main idea is to replace the noisy observations in~\eqref{eq:def-hattheta} by smoothed versions of the sample paths of the process.
To construct this estimator of $\ST$ and to derive its theoretical properties we need a set of assumptions that slightly differ from the one presented in the main manuscript. In what follows we fix an open subinterval $\Ostar$ of $J_\mu(\T)$ with length $0 < \Delta\leq 1$ and, for the sake of homogeneity, we denote $H_\KT = H_\T$.  

\begin{definition}\label{def_loc_reg}
For any $d\in\NN$, $0<H_d\leq 1$ and $L_d>0$, the class $\mathcal{X}(d+H_d, L_d; \Ostar )$ is the set of stochastic processes indexed by  $t\in\Ostar $ for which the following conditions hold true. 
\begin{assumptionHt}
    \item\label{Ht:derivatives}   With probability 1, for any $\ell\in\{0,\dotsc,d\}$ the $\ell$-th order derivative $\nabla^\ell X_t$ of $X_t$ exists for all $t\in\Ostar $, and satisfies:
    \begin{equation*}
    0 < \underline{a}_\ell = \inf_{u\in\Ostar } \EE\left[(\nabla^\ell X_u)^2\right] 
    \leq \sup_{u\in\Ostar } \EE\left[(\nabla^\ell X_u)^2\right] = \overline{a}_\ell < \infty.    
    \end{equation*}
    \item\label{Ht:equivalent} Two positive constants $S_d$ and  $\beta_d$  exist such that:
    \begin{equation*}
        \left|
        \EE\left[ (\nabla^d X_t - \nabla^d X_s)^{2} \right]
        -  L_d^2|t-s|^{2H_d}
        \right|
        \leq
        S_d^2|t-s|^{2H_d}\Delta^{2\beta_d},
        \qquad
        s,t\in\Ostar .
    \end{equation*}
    \item\label{Ht:moments}  $\mathfrak{a}>0$ and $\mathfrak{A}>0$ exist such that, for any $\ell\in\{0,\dotsc, d\}$ and any $p\geq 1$:
    \begin{equation*}
        \EE\left[ |\nabla^\ell X_t - \nabla^\ell X_s|^{2p} \right] \leq 
        \frac{p!}{2} \mathfrak{a} \mathfrak{A}^{p-2},
        \qquad
        s,t\in\Ostar.
     \end{equation*}
\end{assumptionHt}
The quantity $d+H_d$ is the local regularity of the process on $\Ostar $,   while $L_d$ is the Hölder constant of the $d-$th derivative of the trajectories.  
\end{definition}

These classes of processes satisfy embedding properties that will be useful to construct an estimator of $\KT$.

\begin{lemma}\label{lem:regularity-1}
   Assume that $X$ restricted to $\Ostar$ belongs to $\mathcal{X}(\KT+H_\KT, L_\KT;\Ostar)$  for some $0<H_\KT\leq 1$ and $L_\KT>0$. Then, for any $d\in\{0,\dotsc,\KT-1\}$, two positive real numbers $L_d$ and $S_d$ exist such that
    \begin{equation*}
        \left|
        \EE\left[ (\nabla^d X_t - \nabla^d X_s)^{2} \right]
        -  L_d^2|t-s|^{2}
        \right|
        \leq
        S_d^2|t-s|^{2}\Delta^{H_{d+1}}, \quad s,t\in\Ostar,
    \end{equation*}
    with $H_{d+1}= \1_{\{d\neq \KT-1\}} + H_\KT\1_{\{d = \KT-1\}}$.  This  implies that, for any $0\leq d \leq \KT$, the process $X$ restricted to $\Ostar$ belongs to the class $\mathcal{X}(d+H_d, L_d ;\Ostar)$.
\end{lemma}

The three parameters $\KT\in\NN$, $0<H_\KT< 1$ and $L_\KT>0$ are fixed for the rest of the Section. We also  assume that $X$ restricted to $\Ostar$ belongs to $\mathcal{X}(\KT+H_\KT, L_\KT;\Ostar)$. 

\subsection{Heuristics on the definition of the alternative estimator}

Using Lemma~\ref{lem:regularity-1}, we remark that
\begin{equation*}
    \KT = \min\{d \in\NN : H_d < 1 \}.
\end{equation*}
A natural idea  to construct an estimator of $\ST$, is thus to find an estimator $\hat H_d$ of $H_d$, for any $d\in\NN$, and to define:
\begin{equation*}
    \widehat\KT =  \min\{d \in\NN : \hat H_d < 1 - \varphi(\mu) \}
    \quad\text{and}\quad
    \widehat{\ST} = \widehat\KT + \widehat H_{\widehat\KT},
\end{equation*}
for some decreasing function $\varphi(\cdot)$ which will be defined later.

Thus, our problem reduces to the construction of accurate estimators of $H_d$ for all $d\in\NN$. For simplicity, let us denote, for any $s,t\in\Ostar$:
\begin{equation*}
    \theta_d(s,t) = \EE\left[ (\nabla^d X_t - \nabla^d X_s)^{2} \right]
    \approx L_d^2 |t-s|^{2H_d} 
    \quad \text{if }  \Delta \text{ is small}.
\end{equation*}
Now, let $t_1$ and $t_3$ be such that $[t_1,t_3]\subset\Ostar$ and $t_3-t_1=\Delta/2$. Denote by $t_2$ the middle point of $[t_1, t_3]$. It is easily seen that
\begin{equation}\label{eq:tilde-Hd}
    H_d \approx \tilde H_d =  \frac{\log(\theta_d(t_1,t_3)) - \log(\theta_d(t_1,t_2))}{2\log(2)} \quad \text{if }  \Delta \text{ is small}.
\end{equation}
This suggests to define
\begin{equation*}
    \widehat H_d = \frac{\log(\hat\theta_d(t_1,t_3)) - \log(\hat\theta_d(t_1,t_2))}{2\log(2)},
\end{equation*}
where, for any $s,t\in\Ostar$
\begin{equation*}
    \hat\theta_d(s,t) = \frac1{\N0} \sum_{n=1}^{\N0} \left(\widetilde{\nabla^d X}^{(n)}_t - \widetilde{\nabla^d X}^{(n)}_s\right)^{2}.
\end{equation*}
Here, $\widetilde{\nabla^d X}^{(n)}$ denotes a pilot estimator of the curve $\nabla^dX^{(n)}$ that can be obtained by a presmoothing procedure.

\subsection{Concentration properties 
}

The quality of the estimator $\widehat{\ST}$ depends on the quality of the generic nonparametric estimators $\widetilde{\nabla^d X}$ of $\nabla^d X$. To quantify their behavior, we consider the local $\mathbb{L}^p$-risk 
\begin{equation*}
    R_p(d)    = R_p(d;\Ostar )  = \sup_{t\in\Ostar} \EE\left( |\xi_d(t)|^p \right),\quad \text{where } \quad \xi_d(t) = \widetilde{\nabla^d X}_t - {\nabla^d X}_t.
\end{equation*}
Our method applies with any type of nonparametric estimator $\widetilde{\nabla^d X}$ (local polynomials, splines,...) as soon as,  for any $p\in\NN$, its $\mathbb{L}^p$-risk  is  suitably bounded. The following mild condition is satisfied by common estimators, see for instance Theorem 1 in \cite{gaiffas2007} for the case of local polynomials. 

\begin{assumptionLP}
    \item\label{LP:1} There exist two positive constants $\mathfrak{c}$ and $\mathfrak{C}$ such that
    \begin{equation*}
        R_{2p}(d)
        \leq 
        \frac{p!}{2} \mathfrak{c} \mathfrak{C}^{p-2}, \qquad \forall p\geq 1,\; d\in\{0,\dotsc, \delta\}.
    \end{equation*}
\end{assumptionLP}

We can now derive an exponential bound for the concentration of all the estimators $\hat H_d$, $d\in\{0,\dotsc, \KT\}$. To make this exponential bound useful for deriving 
optimal rates for our estimators of the mean and covariance functions,  we will require  the largest risk among $R_2(0),\ldots,R_2(\KT)$ to tend to zero as   $\mu$ increases to infinity. 

\begin{theorem}\label{thm:estimation-alpha}
Assume that  $X$ restricted to $\Ostar$ belongs to $\mathcal{X}(\KT+H_\KT, L_\KT;\Ostar)$, for some integer $\KT \geq 0$ and $0<H_\KT< 1$, and that~\assrefLP{LP:1} holds.   
Assume also that there exists $\tau>0$ and $B>0$ such that:
    \begin{equation*}
        \rho(\mu) = \max_{d\in\{0,\dotsc,\KT\}}   R_2(d)   \leq B\mu^{-\tau}. 
\end{equation*}
Let $0<\gamma<1$ and $\Gamma>0$, and consider 
    \begin{equation*}
        \Delta(\mu) = 2 \exp\left( -\log^{\gamma} (\mu) \right) 
        \quad\text{and}\quad
        \varphi(\mu) = \log^{-\Gamma}(\mu).
    \end{equation*}
Then, for any $\mu$ larger than some constant $\mu_0$ depending on $B$, $\tau$, $\gamma$, $\Gamma$, $H_\KT$, $\beta_\KT$ and for  some positive constant $\mathfrak{f}$,
we have
\begin{equation*}
    \PP\left( |\widehat{\ST} - \ST| > \varphi(\mu) \right)
    \leq
    8(1+\KT)%
    \exp\left( -\mathfrak{f} \N0\varphi^2(\mu) 
    \big[\Delta(\mu)\big]^{4H_\KT} \right).
\end{equation*}
\end{theorem}

The three quantities $\rho(\mu) $, $\Delta(\mu) $ and $ \varphi(\mu)$ are required to decrease to zero, as $\mu$ tends to infinity, in such a way that $\rho(\mu)/\Delta(\mu) + \Delta(\mu)/\varphi(\mu)\rightarrow 0$.  We propose $\Gamma = 2$ and $\gamma = 1/2$. 
The choices of the rates for $\rho(\mu)$, $\Delta(\mu)$ and $ \varphi(\mu)$ satisfy some additional  requirements. First, it will be shown below that, in order to achieve 
optimal rates of convergence for the mean and covariance estimators, the local regularity has to be estimated with a concentration rate $\varphi(\mu)$ faster than  $\log^{-1}(\mu)$. This is a consequence of the identity $\mu^{1/\log(\mu)} = e$ for any $\mu>1$, and of a mild condition on $N$ and $\mu $, such as
\begin{equation}\label{mfka}
\limsup_{N,\mu \rightarrow \infty}  \{\log(N)/\log(\mu)\} <\infty.
\end{equation}
The  technical condition \eqref{mfka}  matches general  situations found in applications.
Second, we want to allow for reasonable rates of increase for $\N0$, the size of the learning set. In Theorem \ref{thm:estimation-alpha}, $\N0$ can increase as fast  as an arbitrary positive power of $\mu$. Third, since $\tau>0$ could be arbitrarily small, the rate imposed on the nonparametric estimators $\widetilde{\nabla^d X}$ of $\nabla^d X$ is a very mild requirement which could be achieved by the common estimators, with random or fixed design, under mild conditions, in particular on the distribution of the $M_i$ and the smoothing parameter. See, for instance, \cite{tsybakov2009} and \cite{BELLONI2015}. In particular, the required rate for the $\widetilde{\nabla^d X}$ can be obtained under general forms of heteroscedasticity.

\subsection{Proofs for the alternative local regularity estimator}

To prove Theorem~\ref{thm:estimation-alpha}, we state and prove some auxiliary lemmas where the following notations will be used:
\begin{equation*}
    \rhostar = \max_{d\in\{0,\dotsc,\KT\}} \left\{\left( R_2(d) \right)^{\frac{1}{4H_d}}\right\},
    \quad\text{and}\quad
    \deltastar = \Delta(\mu).
\end{equation*}
Remark that, for $\mu$ large enough, we have 
$\mathfrak{M} \max(\deltastar^{H_\KT}, \deltastar^{\beta_\KT})< \varphi(\mu)< 2$ and $A_1 \rhostar \leq \deltastar \leq A_2$, where:
\begin{equation*}
    A_1 = \max_{d\in\{0,\dotsc,\KT\}} \left[ \frac{2^{4H_d+3}}{L_d^2} \left( \sqrt\frac{\mathfrak{a}}{\mathfrak{A}}+1 \right) \right]^{\frac{1}{2H_d} },
    \;
    A_2 = \min_{d\in\{0,\dotsc,\KT\}}  \left[ \frac12\left( \frac{L_d}{S_d} \right)^2 \right]^{\frac{1}{H_{d+1}}} \wedge 1,
\end{equation*}
and $\mathfrak{M} = 4\max_{d\in\{0,\dotsc,\KT\}} \left({S_d}/{L_d}\right)^2$.


\begin{proof}[Proof of Lemma~\ref{lem:regularity-1}]
    Using Taylor's formula, there exists $\xi\in(s\wedge t, s\vee t)$ such that:
    \begin{align*}
        \EE\left[ |\nabla^d X_t - \nabla^d X_s|^{2} \right]
        &= (t-s)^2 \EE\left[\left(\nabla^{d+1} X_\xi\right)^2\right]\\
        &= (t-s)^2 \left\{ 
            L_{d}^2
            + 2E_1(d) 
            +  E_2(d)
            \right\},
    \end{align*}
    where
    \begin{align*}
        L_{d}^2 &= \EE\left[\left(\nabla^{d+1} X_{t_1}\right)^2\right]\\
        E_1(d) &= \EE\left[ \nabla^{d+1} X_{t_1} \left(\nabla^{d+1} X_\xi - \nabla^{d+1} X_{t_1}\right)\right]\\
        E_2(d) &= \EE\left[\left(\nabla^{d+1} X_\xi - \nabla^{d+1} X_{t_1}\right)^2\right].
    \end{align*}
    Remark that~\assrefHt{Ht:derivatives} implies that $\underline{a}_{d+1} < L_d^2 < \overline{a}_{d+1}$. Using the Cauchy-Schwartz inequality,  
    \begin{align*}
        |\EE\left[ (\nabla^d X_t - \nabla^d X_s)^{2} \right]) - L_d^2 (t-s)^2| 
        &\leq |2E_1(d)+E_2(d)| (t-s)^2\\
        &\leq \left(2L_d\sqrt{E_2(d)} + E_2(d)\right) (t-s)^2.
    \end{align*}
    Thus, it remains to bound $E_2(d)$. First, consider $d=\delta-1$. Then using~\assrefHt{Ht:equivalent} combined with the fact that $|\xi-t_1| \leq \Delta(\mu) \leq 1$, we have:
    \begin{equation*}
        E_2(\delta) 
        =  \EE\left[\left(\nabla^{\delta} X_\xi - \nabla^{\delta} X_{t_1}\right)^2\right]
        \leq (L_\delta^2+S_\delta^2) \big[\Delta(\mu)\big]^{2H_\delta}.
    \end{equation*}
    This implies that
    \begin{equation*}
        |2 E_1(\delta-1) + E_2(\delta-1)| 
        \leq S_{\delta-1}\big[\Delta(\mu)\big]^{H_\delta}
        \quad\text{with}\quad
        S_{\delta-1} = 2L_d\sqrt{L_\delta^2+S_\delta^2} + L_\delta^2 +S_\delta^2.
    \end{equation*}
    Next, consider the case of $d<\delta-1$. Using Taylor's formula and~\assrefHt{Ht:derivatives}, we have
    $$
        E_2(d) 
        = \EE\left[\left(\nabla^{d+1} X_\xi - \nabla^{d+1} X_{t_1}\right)^2\right]
        \leq \overline{a}_{d+2} (\xi-t_1)^2 \leq \overline{a}_{d+2} \Delta^2(\mu),
    $$
    which implies
    $|2 E_1(d) + E_2(d)| \leq S_{d} \Delta(\mu)$ with 
    $ S_{d} = 2L_d\sqrt{\overline{a}_{d+2}} + \overline{a}_{d+2}$. 
    Lemma~\ref{lem:regularity-1} is now proved. 
\end{proof}


\begin{lemma}\label{lem:regularity-3}
    Assume that the condition of Theorem~\ref{thm:estimation-alpha} hold true.  
    Assume also that $\deltastar^{2\beta_d} \leq (L_\KT/S_\KT)^2/2$.
    For  $d\in\{0,\dotsc,\KT\}$, define 
    \begin{equation*}
        \eta_*(d) = {4}\left(\sqrt{\frac{\mathfrak{a}}{\mathfrak{A}}}+1\right)
        \sqrt{R_2(d)}.
    \end{equation*}
    Let $s,t\in\Ostar$ such that $\theta_d(s,t) > \eta_*(d)$.  For any $\kappa > 0$, define
    \begin{equation*}
        p_d^+(s,t; \kappa) = \PP\left[\hat\theta_d(s,t)> (1+\kappa)\theta_d(s,t)\right]
    \end{equation*}
    and
    \begin{equation*}
        p_d^-(s,t;\kappa ) = \PP\left[\hat\theta_d(s,t)< (1-\kappa)\theta_d(s,t)\right].
    \end{equation*}
    There exists a  constant $\mathfrak{e}>0$ such that, for any $\kappa$ such that $\eta_*(d)<\kappa \theta_d(s,t)<1$, we have
    \begin{equation*}
        \max\big[p_d^+(s,t;\kappa ), p_d^-(s,t;\kappa )\big]
        \leq \exp\left( -\mathfrak{e}\N0 \kappa^2   \underline{a}_{d+1}^2  |t-s|^{4H_d}\right),
    \end{equation*}
    where $\underline{a}_{d+1}$  is defined in~\assrefHt{Ht:derivatives}, for $d<\KT+1$, and $\underline{a}_{\KT+1} = L^2_\KT/2 $.
\end{lemma}

\begin{proof}[Proof of Lemma~\ref{lem:regularity-3}]
    First, let us point out that, by the definition of the space $\mathcal{X}(\KT+H_\KT, L_\KT;\Ostar)$, the quantity   $\theta_d(s,t)$ could not be equal to zero for any $s,t$ in an open interval. Thus, the points $s,t\in\Ostar$ in the statement of Lemma \ref{lem:regularity-3} are well-defined. 

    Set $d\in\{0,\dotsc,\KT\}$ and $s,t\in\Ostar$. Let us decompose
    \begin{equation*}
        \hat\theta_d(s,t) - \theta_d(s,t)
        = \frac1{\N0} \sum_{n=1}^{\N0} \bar Z_n + \EE\left( \hat\theta_d(s,t)\right) - \theta_d(s,t) ,
    \end{equation*}
    where, for any $n=1,\dotsc,\N0$:
    \begin{equation*}
        \bar Z_n = Z_n - \EE(Z_n)
        \quad\text{with}\quad
        Z_n = \left(
        \widetilde{\nabla^d X}^{(n)}_t - \widetilde{\nabla^d X}^{(n)}_s
        \right)^2.
    \end{equation*}
    
    \textbf{Bounding the bias term.} 
    Note that
    \begin{multline*}
        \EE\left( \hat\theta_d(s,t)\right) -  \theta_d(s,t)
        = 2 
        \EE\left[
        (\xi_d(t)-\xi_d(s))
        \left({\nabla^d X}_t - {\nabla^d X}_s\right)
        \right]
        \\+ 
        \EE\left[(\xi_d(t)-\xi_d(s))^2\right]    .
    \end{multline*}
    Since $\EE(\xi_d(t)-\xi_d(s))^2 \leq 2R_2(d) $, using the Cauchy-Schwartz inequality and \assrefHt{Ht:moments}, we obtain:
    $$
    \left| \EE\left( \hat\theta_d(s,t)\right) -  \theta_d(s,t) \right|
    \leq
    2\sqrt{\frac{\mathfrak{a}}{\mathfrak{A}}R_2(d)} + 2R_2(d),
    $$
    Considering without loss of generality that $R_2(d)\leq 1$, we obtain
    \begin{equation*}
        \left| \EE\left( \hat\theta_d(s,t)\right) -  \theta_d(s,t) \right| \leq \eta_*(d)/2.
    \end{equation*}    
    
    \textbf{Moments of the stochastic term.} Let us note  that, for any $p\geq 1$,
    \begin{equation*}
        \EE\left( \left| \bar Z_n \right|^p \right)
        = \EE \left( \left| Z_n - \EE(Z_n)\right|^p \right) 
        \leq 2^p \EE\left( |Z_n|^p \right)
    \end{equation*}
    Moreover,
    \begin{align*}
        |Z_n|^p 
        &= \left| 
        \left({\nabla^d X}^{(n)}_t - {\nabla^d X}^{(n)}_s\right) 
        - 
        \left(\xi_d^{(n)}(t) - \xi_d^{(n)}(s)\right)
        \right|^{2p}\\
        &\leq \left(\left|{\nabla^d X}^{(n)}_t - {\nabla^d X}^{(n)}_s\right| 
        + 
        \left| \xi_d^{(n)}(t) \right|  + \left|\xi_d^{(n)}(s)\right| \right)^{2p}\\
        &\leq 3^{2p-1}\left\{  \left|{\nabla^d X}^{(n)}_t - {\nabla^d X}^{(n)}_s\right|^{2p} + \left|\xi_d^{(n)}(t)\right|^{2p} +  \left|\xi_d^{(n)}(s)\right|^{2p}\right\}.
    \end{align*}
    This implies that
    \begin{align*}
        \EE\left( \left| \bar Z_n \right|^p \right)
        &\leq 
        \frac{18^p}{3} 
        \left\{  
        \EE\left(\left|{\nabla^d X}^{(n)}_t - {\nabla^d X}^{(n)}_s\right|^{2p} \right)
        + 
        2 R_{2p}(d)
        \right\}\\
        &\leq
        {\frac{18^p}{3} \frac{p!}{2}
        \left\{\mathfrak{a} \mathfrak{A}^{p-2}
        +  4\mathfrak{c} \mathfrak{C}^{p-2}
        \right\}}\\
        &\leq { \frac{p!}{2} \mathfrak{d} \mathfrak{D}^{p-2}},
    \end{align*}
    {where $\mathfrak{d} = 108(\mathfrak{a}+4\mathfrak{c})$ and $\mathfrak{D}= 18\max(\mathfrak{A},\mathfrak{C})$.}
    The second line in the last display comes from \assrefHt{Ht:moments} and \assrefLP{LP:1}.
    
    \textbf{Exponential bounds.} Since $\eta_*{(d)}$ has the rate of $\sqrt{R_2(d)}$, we could consider 
    $\eta_*{(d)}< \eta<1$ and, using Bernstein's inequality, we obtain:
    \begin{align*}
        \PP\left(  \hat\theta_d(s,t) - \theta_d(s,t) > \eta \right)
        &\leq \PP\left(  \hat\theta_d(s,t) - \EE\left(\hat\theta_d(s,t)\right) > \eta/2 \right) \\
        &\leq \exp\left( -\frac{\N0 \eta^2}{8\mathfrak{d}+4\mathfrak{D}\eta} \right)\\
        &\leq \exp\left( -\mathfrak{e}\N0 \eta^2 \right),
    \end{align*}
    where $\mathfrak{e} = 1/(8\mathfrak{d}+4\mathfrak{D})$. 
    Since $\kappa\theta_d(s,t)>\eta_*(d)$, this quantity could replace $\eta$ in the above inequality. Hence:
    \begin{equation*}
        \PP\left(  \hat\theta_d(s,t) > (1+\kappa) \theta_d(s,t) \right)
        \leq \exp\left( -\mathfrak{e}\N0 \kappa^2 \theta_d^2(s,t)\right).
    \end{equation*}
    Assume first that $d<\KT-1$. Applying Talor's formula, there exists $\xi\in\Ostar$ such that $|\xi-s|\leq |t-s|$ and
    \begin{equation*}
        \theta_d(s,t) = (t-s)^2 \EE\left[ \left( \nabla^{d+1} X_\xi \right)^2 \right]
        \geq (t-s)^2 \underline{a}_{d+1} = (t-s)^{2H_d} \underline{a}_{d+1}.
    \end{equation*}
    The last inequality is a consequence of~\assrefHt{Ht:moments}. Assume now that $d=\KT$. Using~\assrefHt{Ht:equivalent}, we have:
    \begin{align*}
        \theta_\KT(s,t) 
        &\geq L_\KT^2|t-s|^{2H_\KT} - S_\KT^2|t-s|^{2H_\KT}\deltastar^{2\beta_d}\\
        &= |t-s|^{2H_\KT}\left( L_\KT^2 - S_\KT^2\deltastar^{2\beta_d}\right)\\
        &\geq \frac{L_\KT^2}2 |t-s|^{2H_\KT} = \underline{a}_{\KT+1}  |t-s|^{2H_\KT}.
    \end{align*}
    This implies that, for any $d\in\{0,\dotsc,\KT\}$:
    \begin{equation*}
        p_d^+(s,t;\kappa)
        \leq \exp\left( -\mathfrak{e}\N0 \kappa^2 \underline{a}_{d+1}^2  |t-s|^{4H_d}\right).
    \end{equation*}
    The same reasoning can be applied to bound the term $p_d^-(s,t;\kappa )$.
\end{proof}


\begin{lemma}\label{lem:regularity-4}
    Assume that the condition of Theorem~\ref{thm:estimation-alpha} hold true.  
    Let $d$ be an element of $\{0,\dotsc,\KT\}$.
    There then exists a positive constant $\mathfrak{f}_d$,  depending on $\mathfrak{a}$, $\mathfrak{A}$, $\mathfrak{c}$, $\mathfrak{C}$, $\underline{a}_{d+1}$ and $H_{d}$ such that, for any
    $\epsilon$ which satisfies
    \begin{equation*}
        4\left(\frac{S_{d}}{L_d}\right)^2   \deltastar^{H_{d+1}} < \epsilon \log(2) < 2,
    \end{equation*}
    the following inequality holds:
    \begin{equation*}
        \PP\left( |\hat H_d - H_d| > \epsilon \right)
        \leq
        4 \exp\left( -\mathfrak{f}_d \N0\epsilon^2 \deltastar^{4H_d} \right).
    \end{equation*}
\end{lemma}

\begin{proof}[Proof of Lemma~\ref{lem:regularity-4}]
    
    We first have to control the distance between $H_d$ and the proxy value $\tilde H_d$ defined in~\eqref{eq:tilde-Hd}. To do so, note that, for $k=2,3$, we have
    \begin{equation*}
        \theta_d(t_1, t_{k}) = L_{d+1}^2 |t_{k}-t_1|^{2H_d} (1+\rho_d(k)),
    \end{equation*}
    where, using~\assrefHt{Ht:equivalent} and Lemma~\ref{lem:regularity-1}
    \begin{equation*}
        |\rho_d(k)| \leq \left(\frac{S_{d}}{L_d}\right)^2 \Delta_{*}^{H_{d+1}} \leq \frac12 .
    \end{equation*}
    This implies 
    \begin{align*}
        |\tilde H_d - H_d| 
        &=  \left\vert\frac{\log(1+\rho_d(3) - \log(1+\rho_d(2)))}{2\log(2)}\right\vert\\
        &\leq \frac{|\rho_d(3) - \rho_d(2)|}{\log(2)}\\
        &\leq \frac{2}{\log(2)} \left(\frac{S_{d}}{L_d}\right)^2  \Delta_{*}^{H_{d+1}}\\
        &\leq \frac\epsilon2.
    \end{align*}
    We deduce that
    \begin{align*}
        \PP(|\hat H_d - H_d| > \epsilon)
        &\leq \PP(|\hat H_d - \tilde H_d| > \epsilon - |\tilde H_d - H_d|)\\
        &\leq  \PP(|\hat H_d - \tilde H_d| > \epsilon/2)\\
        &\leq \PP\left( \frac{\hat\theta_d(t_2,t_3)}{\theta_d(t_2,t_3)}\frac{\theta_d(t_1,t_2)}{\hat\theta_d(t_1,t_2)}  > 2^\epsilon \right)\\
        & \mkern180mu + \PP\left( \frac{\hat\theta_d(t_2,t_3)}{\theta_d(t_2,t_3)}\frac{\theta_d(t_1,t_2)}{\hat\theta_d(t_1,t_2)}  < 2^{-\epsilon} \right).
    \end{align*}
    By simple algebra and using the definition of the functions $p_d^+$ and $p_d^-$ introduced in Lemma~\ref{lem:regularity-3}, we obtain
    \begin{align*}
        \PP(|\hat H_d - H_d| > \epsilon)
        &\leq p^+_d(t_1,t_3; 2^{\epsilon/2}-1) + p^-_d(t_1,t_3; 1-2^{-\epsilon/2})\\
        &\qquad + p^+_d(t_1,t_2; 2^{\epsilon/2}-1) + p^-_d(t_1,t_2; 1-2^{-\epsilon/2}).
    \end{align*}
    Note that, using Lemma~\ref{lem:regularity-1} and~\assrefHt{Ht:equivalent}, we have for $k=2,3$: 
    \begin{align*}
        \theta_d(t_1,t_k) 
        &\geq \vert t_3-t_1\vert^{2H_d} \left(L_d^2 - S_d^2 \deltastar^{H_{d+1}}\right)\\
        &= \left(\frac{\deltastar}{2^{k-1}}\right)^{2H_d} \left(L_d^2 - S_d^2 \deltastar^{H_{d+1}}\right)\\
        &\geq \frac{L_d^2}{2}\left(\frac{\deltastar}{2^{k-1}}\right)^{2H_d} \\
        &\geq 4\left( \sqrt\frac{\mathfrak{a}}{\mathfrak{A}}+1 \right)\sqrt{R_2(d)} .
    \end{align*}
    Thus, Lemma~\ref{lem:regularity-3} can be used to write
    \begin{align*}
        p^+_d(t_1,t_k; 2^{\epsilon/2}-1) 
        &\leq \exp\left( -\mathfrak{e}\N0 (2^{\epsilon/2}-1)^2 \underline{a}_{d+1}^2  |t_k-t_1|^{4H_d}\right)\\
        &\leq \exp\left( -\frac{\mathfrak{e}\log^2(2)}{4}\N0 \epsilon^2 \underline{a}_{d+1}^2  \left(\frac{\deltastar}{2^{k-1}}\right)^{4H_d}\right)\\
        &\leq \exp\left( -\frac{\underline{a}_{d+1}^2\mathfrak{e}\log^2(2)}{2^{2+8H_d}}\N0 \epsilon^2   \deltastar^{4H_d}\right).
    \end{align*}
    The same reasoning can be applied to bound $p^-_d(t_1,t_k; 1-2^{-\epsilon/2})$. However, note that in this case $1-2^{-\epsilon/2} \leq \epsilon/4$. This implies that:
    \begin{align*}
        p^-_d(t_1,t_k; 1-2^{-\epsilon/2}) 
        &\leq \exp\left( -\frac{\underline{a}_{d+1}^2\mathfrak{e}\log^2(2)}{2^{4+8H_d}}\N0 \epsilon^2   \deltastar^{4H_d}\right).
    \end{align*}
    To complete the proof of Lemma \ref{lem:regularity-4}, if suffices to take $\mathfrak{f}_d = {\underline{a}_{d+1}^2\mathfrak{e}\log^2(2)}/{2^{4+8H_d}}$.
\end{proof}


\begin{proof}[Proof of Theorem~\ref{thm:estimation-alpha}]
    Note that:
    \begin{align*}
        \PP(|\hST - \ST| > \varphi(\mu))
        &\leq \PP(|\hST - \ST| > \varphi(\mu), \hKT = \KT) + \PP(\hKT \neq \KT) \\
        &\leq \PP(|\hat H_\KT- H_\KT| >\varphi(\mu)) +  \PP(\hKT < \KT) + \PP(\hKT > \KT) \\
        &\leq \PP(|\hat H_\KT- H_\KT| >\varphi(\mu)) +  
        \sum_{d=0}^{\KT-1}\PP(\hat H_d  <  1- \varphi(\mu)) \\& \mkern266mu + \PP(\hat H_\KT  >  1- \varphi(\mu)).
    \end{align*}
    Now, recall that, for $d<\KT$ we have $H_d = 1$. Note also that $H_\KT < 1$. This implies that:
    \begin{align*}
        \PP(|\hST - \ST| > \varphi(\mu))
        &\leq \PP(|\hat H_\KT- H_\KT| >\varphi(\mu)) \\
        &+  
        \sum_{d=0}^{\KT-1}\PP(|\hat H_d  -  H_d| > \varphi(\mu)) + \PP(|\hat H_\KT - H_\KT| > 1- H_\KT).
    \end{align*}
    Since $1-H_\KT>\varphi(\mu)$ for $\mu$ sufficiently large, such that $\varphi(\mu)$ could replace $\epsilon$ in Lemma \ref{lem:regularity-4}, we have:
    \begin{equation*}
        \PP(|\hST - \ST| > \varphi(\mu))
        \leq 
        8(1+\KT)
        \exp\left( -\mathfrak{f} \N0\varphi^2(\mu) \Delta_*^{4H_\KT} \right).
    \end{equation*}
    The Theorem \ref{thm:estimation-alpha} is proved.
\end{proof}

\section{Technical lemmas}\label{sec:tech_lem}


\begin{proof}[Proof of Lemma \ref{lem:Tl-Tk_main}]
Let $ \mathcal{C} = \{ M \geq K_0\} \setminus\mathcal B$. We have
\begin{equation*}
    \EEB\left[\big|T_{(l)}-T_{(k)}\big|^{\alpha}\right]
    =
    \EE\left[\big|T_{(l)}-T_{(k)}\big|^{\alpha}\1_{\mathcal{B}}\right]
    = (I) - (II),
\end{equation*}
where
$$
    (I) = \EE\left[\big|T_{(l)}-T_{(k)}\big|^{\alpha}\1_{M\geq K_0}\right]\quad \text{ and } \quad 
    (II) = \EE\left[\big|T_{(l)}-T_{(k)}\big|^{\alpha}\1_{\mathcal{C}}\right].
$$
We study separately the two terms of the right hand side of the above equation. 

\textbf{Study of $(II)$.} Note that
\begin{equation*}
    \EE\left[\big|T_{(l)}-T_{(k)}\big|^{\alpha}\1_{\mathcal{C}}\right]
    \leq |I|^{\alpha}  \PP(\mathcal{C}).       
\end{equation*}
The event $\mathcal{C}$ happens if, less than $K_0$ random times among $T_1,\dotsc, T_M$ fall into the interval $J_\mu(\T)$. This implies that
\begin{equation*}
    \PP(\mathcal{C})
    \leq \EE\left[ \PP(B_M < K_0 \mid M) \1_{\{M\geq K_0\}}\right]
\end{equation*}
where, for any integer $m\geq 1$, $B_m$ denotes a Binomial random variable defined as $\mathcal B (m, |J_\mu(\T)|/|I|)$, independent of $M$.
Using the Bernstein inequality, we obtain
\begin{equation*}
    \PP(B_M < K_0 \mid M)
    \leq 
    \exp\left( -\frac{2|J_\mu(\T)|}{|I|}M + 2K_0 \right).
\end{equation*}
Since $|J_\mu(\T)|/|I|\leq (\log(\mu))^{-1}$ and $K_0 \leq (2\log(\mu))^{-1}\mu$, we obtain
\begin{align*}
    \PP(\mathcal{C})
    &\leq \exp\left( -\frac{2|J_\mu(\T)|}{|I|}\mu + 2K_0 \right)
    \EE\left[ \exp\left( -\frac{2|J_\mu(\T)|}{|I|}(M-\mu) \right) \1_{\{M\geq K_0\}}\right]\\
 &\leq   \exp\left( -\frac{\mu}{\log(\mu)} \right)
    \EE\left[ \exp\left( -\frac{2|J_\mu(\T)|}{|I|}(M-\mu) \right)\1_{\{M\geq K_0\}} \right].
\end{align*}
To bound the last expectation, let $0<\epsilon<1$ be some real number. Then
\begin{align*}
    & \exp\left( -\frac{\mu}{\log(\mu)} \right)
    \EE\left[ \exp\left( -\frac{2|J_\mu(\T)|}{|I|}(M-\mu) \right)\1_{\{M\geq K_0\}} \right]\\
    &\;\leq \exp\left( -\frac{\mu}{\log(\mu)} \right)
    \left\{\exp\left( \frac{2|J_\mu(\T)|}{|I|}\mu\epsilon \right)\right. \\
    &\mkern180mu\left. + \EE\left[ \exp\left( -\frac{2|J_\mu(\T)|}{|I|}(M-\mu) \right)\1_{\{K_0\leq M\leq \mu - \mu\epsilon\}} \right]\right\}\\
    &\; \leq \exp\left( -\frac{\mu}{\log(\mu)} \right)
    \left\{\exp\left( \frac{4\mu\epsilon}{\log(\mu)} \right)+\exp\left(\frac{4\mu}{\log(\mu)}\right)\PP\left[ |M-\mu| >\mu\epsilon\right]\right\}\\
    &\; \leq \exp\left( -\frac{\mu}{\log(\mu)} \right)
    \left\{\exp\left( \frac{4\mu\epsilon}{\log(\mu)} \right)+\exp\left(\frac{2\mu}{\log(\mu)}\right)\exp(-\gamma_0\mu\epsilon)\right\}.
\end{align*}
This implies that:
\begin{align*}
    \PP(\mathcal{C})
    &\leq \exp\left[ -\frac{\mu}{\log(\mu)} 
    \left(
    1 -4\epsilon
    \right)\right]
    +
    \exp\left[ - \mu\epsilon 
    \left(
    \gamma_0 -
    \frac{1}{\epsilon\log(\mu)}
    \right)\right].
\end{align*}
Taking $\epsilon = 1/8$, we obtain, for sufficiently large $\mu$~:
\begin{align*}
    \PP(\mathcal{C})
    &\leq 2\exp\left[ -\frac{\mu}{2\log(\mu)} \right].
\end{align*}
We finally obtain, for sufficiently large $\mu$, 
\begin{equation}\label{eq:II-final}
    (II)= \EE\left[\big|T_{(l)}-T_{(k)}\big|^{\alpha}\1_{\mathcal{C}}\right]
    \leq 2|I|^{\alpha} \exp\left[ -\frac{\mu}{2\log(\mu)} \right].  
\end{equation}

\textbf{Study of $(I)$.} 
We define the random variable $\rho$ by the equation:
\begin{equation*}
    \EE\left[\big|T_{(l)}-T_{(k)}\big|^{\alpha} \mid M\right]
    =
    \left(
    \frac{l-k}{f(\T)(M+1)}
    \right)^{\alpha}
    \big( 
    1+\rho 
    \big)
\end{equation*}
Using Lemma \ref{lem:moment-spacing}, we have, almost surely:
\begin{equation*}
    |\rho| \leq \mathfrak{c}_0 
    \left\{
    \frac1M + \frac1{\mathfrak{s}r} + 
    \frac1{M\mathfrak{s}r} +
    \left(\frac{\mathfrak{s}r}{M+1}\right)^{\beta_f \alpha /4}
    \right\}.
\end{equation*}
Whenever $8 r\geq  (\mu+1)^{\beta_f \alpha/(4+\beta_f \alpha)}$, by bounding smaller terms by the dominant ones and balancing the dominant terms on the right hand side of the last inequality, we have for $\mu$ large enough:
\begin{equation}\label{jan7b}
    |\rho| \leq 3\mathfrak{c}_0 
    \left(\frac{\mathfrak{s}r}{M+1}\right)^{\beta_f \alpha /4}
    + \mathfrak{c}_0 \left(\frac{\mathfrak{s}r}{\mu+1}\right)^{\beta_f \alpha /4}.
\end{equation}
On the other hand we have
\begin{align}
    \EE\left[\big|T_{(l)}-T_{(k)}\big|^{\alpha}\1_{M\geq K_0}\right]
    &=
    \EE\left(\EE\left[\big|T_{(l)}-T_{(k)}\big|^{\alpha} \mid M\right]\1_{M\geq K_0}\right)\nonumber\\
    &\mkern-18mu=\EE\left[
    \left(
    \frac{l-k}{f(\T)(M+1)}
    \right)^{\alpha}
    \big( 
    1+\rho 
    \big)
    \1_{M\geq K_0}
    \right]\nonumber\\
    &\mkern-18mu= \left(
    \frac{l-k}{f(\T)(\mu+1)}
    \right)^{\alpha}
    \EE\left[
    \left(\frac{\mu+1}{M+1}\right)^{\alpha}
    \big( 1+\rho \big)
    \1_{M\geq K_0}
    \right].\label{eq:I-0}
\end{align}
Now, define: 
\begin{equation}\label{eq:value-of-t}
    t = \frac{\left(\log(\mu+1)\right)^{2}}{2} \leq (\mu+1)/2,   
\end{equation}
and consider the following decomposition:
\begin{align*}
    \EE\left[
    \left(\frac{\mu+1}{M+1}\right)^{\alpha}
    \big( 1+\rho \big)
    \1_{M\geq K_0}
    \right]
    &= \EE\left[
    \left(\frac{\mu+1}{M+1}\right)^{\alpha}
    \big( 1+\rho \big)
    \1_{M\geq K_0}\1_{|M-\mu| \leq t}
    \right]\\
    &\qquad
    +
    \EE\left[
    \left(\frac{\mu+1}{M+1}\right)^{\alpha}
    \big( 1+\rho \big)
    \1_{M\geq K_0}\1_{|M-\mu | > t}
    \right].
\end{align*}
Using Assumption~\assrefH{ass:M}, combined with the fact that $r\leq \mu$, the term of the right hand side can be roughly bounded as follows:
\begin{multline*}
    \EE\left[
    \left(\frac{\mu+1}{M+1}\right)^{\alpha}
    \big( 1+\rho \big)
    \1_{M\geq K_0}\1_{|M-\mu| > t}
    \right]\\
    \leq 4\mathfrak{c}_0 (\mu+1)^{\alpha(1+\alpha \beta_f /4)} 
    \PP\left(|M-\mu | > t\right)\\
    \leq 4\mathfrak{c}_0 (\mu+1)^{\alpha(1+\alpha \beta_f /4)} 
    \exp\left(-\frac{\gamma_0}2 \left(\log(\mu+1)\right)^{2}\right).
\end{multline*}
Thus, for sufficiently large  $\mu$, we have:
\begin{equation}\label{eq:I-1}
    \EE\left[
    \left(\frac{\mu+1}{M+1}\right)^{\alpha}
    \big( 1+\rho \big)
    \1_{M\geq K_0}\1_{|M-\mu | > t}
    \right]
    \leq \frac{1}{\mu+1}.
\end{equation}
It remains to study the term
\begin{equation*}
    \EE\left[
    \left(\frac{\mu+1}{M+1}\right)^{\alpha}
    \big( 1+\rho \big)
    \1_{M\geq K_0}\1_{|M-\mu| \leq t}
    \right].
\end{equation*}
To do so, let us define
\begin{equation*}
    \tilde\rho_\alpha = 
    \left( \frac{\mu+1}{M+1} \right)^{\alpha} - 1. 
\end{equation*}
Since $K_0 < (\mu+1)/2$, we have 
\begin{align}
    \left(\frac{\mu+1}{M+1}\right)^{\alpha}
    \big( 1+\rho \big)
    \1_{M\geq K_0}\1_{|M-\mu| \leq t}
    &=
    \big( 1+\tilde\rho_\alpha \big)
    \big( 1+\rho \big)
    \1_{|M-\mu| \leq t}\nonumber\\
    &\mkern-84mu= 1 +
    \big(
    \tilde\rho_\alpha + 
    \rho + 
    \tilde\rho_\alpha\rho
    \big)
    \1_{|M-\mu| \leq t}
    - \1_{|M-\mu| > t}.\label{eq:I-2-a}
\end{align}
Under the event $\{|M-\mu \leq t|\}$, since $t< (\mu+1)/2$, we have:
\begin{equation*}
    1-\frac{\alpha t}{\mu+1} \leq \left( \frac{\mu+1}{M+1} \right)^{\alpha} \leq 1+\frac{2(2^{\alpha}-1)t}{\mu+1},
\end{equation*}
which leads to
\begin{equation}\label{eq:control-tilde-rho}
    \left| \tilde\rho_\alpha \right|
    \1_{\{|M-\mu \leq t|\}}
    \leq \frac{2(2^{\alpha}-1)t}{\mu+1}
    = (2^{\alpha}-1)\frac{\log^2(\mu+1)}{\mu+1}.
\end{equation}
Note also that by~\eqref{jan7b}:
\begin{equation}\label{eq:control-rho}
    |\rho| \1_{|M-\mu| \leq t}
    \leq 4\mathfrak{c}_0 
    \left(\frac{2\mathfrak{s}r}{\mu+1}\right)^{\beta_f \alpha/4}.
\end{equation}
Gathering~\eqref{eq:I-2-a}, \eqref{eq:control-tilde-rho} and~\eqref{eq:control-rho} we obtain, for sufficiently large  $\mu$~:
\begin{align}
    \left|
    \EE\left[
    \left(\frac{\mu+1}{M+1}\right)^{\alpha}
    \big( 1+\rho \big)
    \1_{M\geq K_0}\1_{|M-\mu| \leq t}
    \right] - 1
    \right|\\
    &\mkern-72mu\leq
    5\mathfrak{c}_0 \left( \frac{2\mathfrak{s}r}{\mu+1} \right)^{\beta_f \alpha/4}
    + \PP(|M-\mu|>t)\nonumber\\
    &\mkern-72mu\leq 6\mathfrak{c}_0 \left( \frac{2\mathfrak{s}r}{\mu+1} \right)^{\beta_f \alpha/4}. \label{eq:I-2}
\end{align}
Combining~\eqref{eq:I-0} with~\eqref{eq:I-1} and~\eqref{eq:I-2}, we obtain, for sufficiently large  $\mu$~:
\begin{equation}\label{eq:I-final}
    \EE\left[
    \big|T_{(l)}-T_{(k)}\big|^{\alpha}\1_{M\geq K_0}
    \right]
    =
    \left(
    \frac{l-k}{f(\T)(\mu+1)}
    \right)^{\alpha}
    \left(1+\tilde R\right),
\end{equation}
where
\begin{equation*}
    |\tilde R| 
    \leq
    7\mathfrak{c}_0 \left( \frac{2\mathfrak{s}r}{\mu+1} \right)^{\beta_f \alpha/4}.
\end{equation*}
From \eqref{eq:II-final} and~\eqref{eq:I-final}, we obtain, for $\mu$ large enough:
\begin{equation*}
    \EEB\left[\big|T_{(l)}-T_{(k)}\big|^{\alpha}\right]
    =
    \left(
    \frac{l-k}{f(\T)(\mu+1)}
    \right)^{\alpha}
    \left(1+R\right),
\end{equation*}
where
\begin{equation*}
    |R| 
    \leq
    8\mathfrak{c}_0 \left( \frac{2\mathfrak{s}r}{\mu+1} \right)^{\beta_f \alpha/4}
    =
    8\mathfrak{c}_0 (2f(\T))^{\beta_f \alpha/4}
    \left(
    \frac{l-k}{f(\T)(\mu+1)}
    \right)^{\beta_f \alpha/4} .
\end{equation*}
This ends the proof.
\end{proof}

Let us recall the definitions
\begin{equation}\label{eq:Anstar_SM}
  A = A_{M,h}=  \frac{1}{Mh} \sum_{m=1}^{M} U\left( \frac{T_m-\T}{h} \right)U^\top\left( \frac{T_m-\T}{h} \right)K\left( \frac{T_m-\T}{h} \right),
 \end{equation}
  and
  $$
     \mathbf{A} = f(\T) \int_\RR U(u) U^\top(u) K(u) du,
  $$
  with $U(u) = (1, u, \dotsc, u^{ \hKT}/\hKT!)$. Moreover, $\lambda$ and $\lambda_0$ are the smallest eigenvalues of $A$ and $\mathbf{A}$, respectively.   The matrix $\mathbf{A}$ is positive definite and thus $\lambda_0>0$. See \cite{tsybakov2009}. The following result shows that, with high probability, $\lambda$ stays away from zero. Let us recall that in our context, $\hKT$ is a generic estimator of $\KT$, independent of the online set of curves. Since dimension of the matrices $A$ and $\mathbf A$ are given by this estimator, the probability $\PP(\cdot)$ in  Lemma \ref{lem:eigenval}  should be understood as the conditional probability given the estimator $\hKT$. Finally, recall that 
  \begin{equation*}
   \widehat h = \left(\frac{1}{M}\right)^{1/(2\hST +1)}.
\end{equation*}


\begin{proof}[Proof of Lemma~\ref{lem:eigenval}]
 Without loss of generality, we could work on the set $\{\hKT = \KT\}$.  Moreover, for simplicity, we write $h$ instead of $\widehat h$ below in this proof. 
 
Note that, using Assumption~\assrefLP{ass:M2}, for any $1\leq i \leq j \leq \degree$, the element  $A_{i,j}$ tends almost surely to the element $\mathbf{A}_{i,j}$ as $\mu$ goes to infinity. This implies that the matrix $A$ tends to the matrix $\mathbf{A}$. This also implies that, for sufficiently large  $\mu$, we have $\lambda>0$. More precisely, we have:
    \begin{align*}
        |\lambda - \lambda_0| \leq \|A - \mathbf{A}\|_2 \leq 
       (\KT+1)  \|A -\mathbf{A}\|_{\infty},
    \end{align*}
    where $\|\cdot\|_2$ denotes the norm induced by the Euclidean norm whereas $\|\cdot\|_{\infty}$ denotes the entrywise sup-norm. 
    Let $\tilde\PP(\cdot)$ and $\tilde\EE(\cdot)$ denote the conditional probability $\PP(\cdot | M)$ and conditional expectation $\EE(\cdot | M)$, respectively. Then:
    $$
        \tilde\PP(\lambda \leq \beta)
        \leq \tilde\PP(|\lambda - \lambda_0| \geq \lambda_0/2)
        \leq \sum_{0\leq i,j\leq \degree}\tilde\PP\left(|(A_n)_{i,j}- \mathbf{A}_{i,j}| \geq \lambda_0/\{2(\KT+1) \} \right).
   $$
    Next, we decompose  
    \begin{equation*}
        A_{i,j}- \mathbf{A}_{i,j} = A_{i,j}- \tilde\EE(A_{i,j})
        +\tilde\EE(A_{i,j}) -\mathbf{A}_{i,j}.
    \end{equation*}
   Using Assumption~\assrefH{ass:T} and the fact that $K(\cdot)$ has the support $[-1,1]$, we have:
    \begin{align*}
        \left|\tilde\EE(A_{i,j})-\mathbf{A}_{i,j}\right|
        &\leq \left|\int_\RR \left[U(u) U^\top(u)\right]_{i,j} K(u) \big\{f(\T+hu) - f(\T)\big\}du\right|\\
        &= L_f h^{\beta_f}\int_\RR  \left|\left[U(u) U^\top(u)\right]_{i,j}u K(u)\right| du\\
        &\leq L_f h^{\beta_f}\int_\RR  \left|u \right| K(u)du\\
        &\eqqcolon L_f\|K\|_1 h^{\beta_f}. 
    \end{align*}
    This implies that, for $h$ sufficiently small, that is for $M$ sufficiently large, 
    \begin{align*}
        \tilde\PP(\lambda \leq \beta)
        &\leq 
        \sum_{0\leq i,j\leq \degree}\tilde\PP(|A_{i,j}- \tilde\EE(A_{i,j})| \geq  \lambda_0/\{2(\KT+1) \} - L_f\|K\|_1 h^{\beta_f})\\
        &\leq \sum_{0\leq i,j\leq \degree}\tilde\PP(|A_{i,j}- \tilde\EE(A_{i,j})|  > \lambda_0/\{4(\KT+1) \}).
    \end{align*}
    Let us define
    \begin{align*}
        \xi_{m,i,j}
        &= \left[
        U\left( \frac{T_m - \T}{h} \right)
        U^\top\left( \frac{T_m - \T}{h} \right)
        \right]_{i,j} 
        K\left( \frac{T_m - \T}{h} \right)\\
        &= \frac1{i!j!}\left( \frac{T_m-\T}{h} \right)^{i+j}  
        K\left( \frac{T_m - \T}{h} \right).
    \end{align*}
    By property~\eqref{eq:bounded-kernel}, we have 
    \begin{equation*}
        \left|\xi_{m,i,j} - \tilde\EE(\xi_{m,i,j})\right|
        \leq 2 \kappa.
    \end{equation*}
    Moreover, for $h$ sufficiently small, that is for $M$ sufficiently large, $f(t)\leq 2 f(\T)$, $\forall |t-\T|\leq h$, and thus 
    \begin{align*}
        \sum_{m=1}^M \widetilde{\Var}(\xi_{i,j}^{(m)})
        &\leq  \sum_{m=1}^M \tilde{\EE}\left[\{\xi_{i,j}^{(m)}\}^2\right]\\
        &\leq 
        2 f(\T) M h  \int_\RR 
       \left| \left[U(u)U^\top(u)\right]_{i,j} \right| K^2(u) du\\
        &\leq 
       2 f(\T)  \|K\|_2^2 \, Mh .
    \end{align*}
    Applying the Bernstein inequality \citep[see][p.~95]{MR595165}, we obtain, for any $x>0$:
    \begin{equation*}
        \tilde\PP\left(
        \frac{1}{Mh} \sum_{m=1}^M 
        \left|\xi_{m,i,j} - \tilde\EE(\xi_{m,i,j})\right| > x
        \right)
        \leq  2 \exp\left( -\frac{M^2x^2}{\frac{2\|f\|_\infty\|K\|_2^2M}{h}+\frac{4\kappa xM}{3h}} \right).
    \end{equation*}
    Then equation~\eqref{eq:E-beta} follows if we define:
    \begin{equation*}
        \mathfrak{g} = \psi\left(\frac{\lambda_0}{4(\KT+1)
        }\right)
        \quad\text{with}\quad
        \psi(x) = \frac{x^2}{2\|f\|_\infty\|K\|_2^2+\frac{4\kappa x}{3}}.
    \end{equation*}

    It remains to prove \eqref{eq:E2-beta}. Let us define the events
    \begin{equation}\label{eq:events-EF_SM}
    \mathcal{E}_\beta= \{\lambda > \beta\}  \quad \text{and } \quad \mathcal{F} = \big\{|\hHT - \HT| \leq \log^{-2}(\mu)\big\}\cap \left\{\hKT = \KT \right\}.
\end{equation}
    Using~\eqref{eq:E-beta}, we have
    \begin{align*}
        \PP(\overline{\mathcal{E}}_\beta)
        &= 2\EE[\exp(-\mathfrak{g} Mh)\1_\mathcal{F}] + \PP(\overline{\mathcal{F}})\\
        &\leq 2\EE[\exp(-\mathfrak{g} Mh)\1_\mathcal{F}] + 2\mathfrak{K}_1\exp(-\mu).
    \end{align*}
    The last line comes from Assumption~\assrefLP{ass:hatHt}. Note that under $\mathcal{F}$ 
    \begin{equation*}
        Mh = M^{\frac{2 \{\hKT+ \hHT\}}{2\{\hKT+ \hHT\}+1}} = M^{\frac{2 \{\KT+ \hHT\}}{2\{\KT+ \hHT\}+1}}
        = M^{\frac{2 \{\KT+ \HT\}}{2 \{\KT+ \HT\}+1}+\eta},
    \end{equation*}
    with
    \begin{equation*}
        |\eta| = \left|\frac{2(\hHT - \HT)}{(2\{\hKT+ \hHT\}+1)(2 \{\KT+ \HT\}+1)}\right|
        \leq 2|\hHT - \HT|.
    \end{equation*}
    Assumption~\assrefLP {ass:M2} implies that, under $\mathcal{F}$ and, for sufficiently large  $\mu$,
    \begin{equation}\label{eq:hHT-HT}
        Mh \geq
        \left( \frac{\mu}{\log(\mu)} \right)^{\frac{2 \{\KT+ \HT\}}{2 \{\KT+ \HT\}+1}-\frac{2}{\log^2(\mu)}}
        \geq \frac12 \left( \frac{\mu}{\log(\mu)} \right)^{\frac{2\{\KT+ \HT\}}{2\{\KT+ \HT\}+1}}.
    \end{equation}
    Thus, we have
    \begin{align*}
        \PP(\overline{\mathcal{E}}_\beta)
        &\leq 2 \exp\left(-\frac{\mathfrak{g}}{2}
       \tau(\mu)\log^2(\mu)\right)
         +2 \mathfrak{K}_1\exp(-\mu)\\
        &\leq \mathfrak{K}_2 \exp\left[- \frac{\mathfrak{g}}{2}\tau(\mu)\log^2(\mu)\right],
    \end{align*}
    for some positive constant $\mathfrak{K}_2$, that does not depend on $0<\beta\leq\lambda_0/2$.
    \end{proof}
        
    \begin{lemma}\label{lem:technical-exp_SM}
        Let $\xi$ be a positive random variable such that
        \begin{equation*}
            c_1 \coloneqq \EE\left[ \exp\left( \eta_0 \xi^4 \right) \right]  < \infty,
        \end{equation*}
        for some positive constant $\eta_0$. Then, for any $\tau\geq 1$:
        \begin{equation*}
            \EE\left[ \exp\left( \tau\xi \right) \right]
            \leq
            c_1 \exp\left(c_2\tau^{4/3}\right)
            \qquad\text{where}\qquad
            c_2 = \left( \frac{5}{16\eta_0} \right)^{1/3}.
        \end{equation*}
    \end{lemma}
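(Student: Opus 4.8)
The plan is to reduce the statement to a single deterministic pointwise inequality and then integrate. Writing $\exp(\tau\xi) = \exp(\eta_0\xi^4)\exp(\tau\xi - \eta_0\xi^4)$, it suffices to show that for every $\xi\geq 0$ one has $\tau\xi - \eta_0\xi^4 \leq c_2\tau^{4/3}$, because then $\exp(\tau\xi)\leq \exp(c_2\tau^{4/3})\exp(\eta_0\xi^4)$ pointwise, and taking expectations together with the definition $c_1 = \EE[\exp(\eta_0\xi^4)]$ yields exactly $\EE[\exp(\tau\xi)]\leq c_1\exp(c_2\tau^{4/3})$. Thus all the probabilistic content collapses into one elementary estimate, and the hypothesis $c_1<\infty$ is used only at the very last step.

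To establish the pointwise bound I would maximise $\phi(\xi) = \tau\xi - \eta_0\xi^4$ over $\xi\geq 0$. Since $\phi'(\xi) = \tau - 4\eta_0\xi^3$ vanishes only at $\xi_\star = (\tau/(4\eta_0))^{1/3}$, and $\phi$ increases then decreases there, this is the global maximum; using $\tau = 4\eta_0\xi_\star^3$ a short computation gives $\phi(\xi_\star) = 3\eta_0\xi_\star^4 = \tfrac{3}{4^{4/3}}\,\eta_0^{-1/3}\tau^{4/3}$. (Equivalently one may invoke Young's inequality with conjugate exponents $4$ and $4/3$, which produces the same sharp constant.) It then only remains to compare this value with the stated one.

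The comparison is immediate: cubing, $(3/4^{4/3})^3 = 27/256 \leq 80/256 = 5/16 = (c_2\eta_0^{1/3})^3$, so $\tfrac{3}{4^{4/3}}\,\eta_0^{-1/3}\leq (5/(16\eta_0))^{1/3} = c_2$ and hence $\phi(\xi_\star)\leq c_2\tau^{4/3}$, which is the desired pointwise inequality. In fact the argument goes through for every $\tau>0$, so the restriction $\tau\geq 1$ is not actually needed and the mildly non-sharp constant $5/16$ simply buys a clean closed form with room to spare. There is no genuine obstacle here: the only point requiring attention is verifying that the explicit $5/16$ dominates the optimal value $27/256$, which it does with a comfortable margin, after which everything reduces to the routine factorise-and-integrate step described above.
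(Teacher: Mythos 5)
Your proof is correct, and it takes a genuinely different and more elementary route than the paper's. You reduce everything to the pointwise Young-type inequality $\tau\xi - \eta_0\xi^4 \leq \tfrac{3}{4^{4/3}}\,\eta_0^{-1/3}\tau^{4/3}$, verified by maximizing $\phi(\xi)=\tau\xi-\eta_0\xi^4$ at $\xi_\star=(\tau/(4\eta_0))^{1/3}$, and then simply multiply by $\exp(\eta_0\xi^4)$ and integrate; the only remaining check is $27/256 \leq 80/256 = 5/16$, which you do. The paper instead normalizes $\eta_0=5/16$ by rescaling $\xi$, writes $\exp(\tau\xi)=\exp(\gamma)\exp\bigl(-\gamma(1-\tau\xi/\gamma)\bigr)$, and applies the quartic expansion $1-4x=(1-x)^4-6x^2+4x^3-x^4$ twice (once for the quartic, once to absorb the resulting quadratic term) before choosing $\gamma=\eta=\tau^{4/3}/2$; the constant $5/16=\tfrac{72}{256}+\tfrac{8}{256}$ is exactly what that two-stage absorption produces. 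Your argument buys several things: it is shorter, it isolates the probabilistic content in a single monotonicity-of-expectation step, it shows the hypothesis $\tau\geq 1$ is superfluous (the bound holds for all $\tau>0$), and it exhibits the sharp constant $3\cdot 4^{-4/3}\eta_0^{-1/3}$, of which the paper's $c_2=(5/(16\eta_0))^{1/3}$ is a slightly lossy but closed-form upper bound. The paper's method, by contrast, mainly serves to explain the provenance of the specific constant $5/16$ used elsewhere in their arguments; as a proof technique it is strictly more complicated here.
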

    
    \begin{proof}[Proof of Lemma \ref{lem:technical-exp_SM}]
    Defining $\zeta = (16\eta_0/5)^{1/4}\xi$, we can assume, without loss of generality that $\eta_0=5/16$.
    Let $\gamma\geq \tau$.
    Remark that, since
    \begin{equation*}
        1-\frac{\tau\xi}{\gamma}
        = \left( 1-\frac{\tau\xi}{4\gamma} \right)^4
        - 6\left( \frac{\tau\xi}{4\gamma} \right)^2
        + 4\left( \frac{\tau\xi}{4\gamma} \right)^3
        -\left( \frac{\tau\xi}{4\gamma} \right)^4,
    \end{equation*}
    we obtain:
    \begin{align*}
        \EE\left[ \exp\left( \tau\xi \right) \right]
        &= \exp(\gamma) \EE\left[ \exp\left( -\gamma \left(1-\frac{\tau\xi}{\gamma} \right) \right)\right] \\
        &\leq \exp(\gamma) \EE\left[ 
            \exp\left( \frac{3\gamma}{8}\left( \frac{\tau\xi}{\gamma} \right)^2 \right)
            \exp\left( \frac{\gamma}{256}\left( \frac{\tau\xi}{\gamma} \right)^4 \right)\right]\\
        &\leq 
        \exp(\gamma+\eta) 
        \EE\left[ 
            \exp\left( -\eta \left( 1- \frac{3\gamma}{8\eta}\left( \frac{\tau\xi}{\gamma} \right)^2 \right)  \right)
            \exp\left( \frac{\gamma}{256}\left( \frac{\tau\xi}{\gamma} \right)^4 \right)
        \right].
    \end{align*}
    Using the fact that
    \begin{equation*}
        1- \frac{3\gamma}{8\eta}\left( \frac{\tau\xi}{\gamma} \right)^2
        = \left[ 1- \frac{3\gamma}{16\eta}\left( \frac{\tau\xi}{\gamma} \right)^2 \right]^2 - \left[\frac{3\gamma}{16\eta}\left( \frac{\tau\xi}{\gamma} \right)^2 \right]^2,
    \end{equation*}
    we obtain:
    \begin{equation*}
        \EE\left[ \exp\left( \tau\xi \right) \right]
        \leq 
        \exp(\gamma+\eta) 
        \EE\left[ 
            \exp\left( \frac{9}{256}\frac{\tau^4\xi^4}{\eta\gamma^2} + \frac{1}{256} \frac{\tau^4\xi^4}{\gamma^3}\right)
        \right].
    \end{equation*}
    Taking $\gamma=\eta=\tau^{4/3}/2$, we obtain:
    \begin{equation*}
        \EE\left[ \exp\left( \tau\xi \right) \right]
        \leq
        \exp(\tau^{4/3}) \EE\left[ \exp\left(\frac{5}{16} \xi^4\right)\right].
    \end{equation*}
    This completes the proof.
    \end{proof}

    \section{Moment bounds for spacings}\label{mom_spac} 
    
    We need to find an accurate approximation for moments like 
    $$
    \mathbb{E}[(T_{(k)} - T_{(l)} )^\alpha\mid M =m],
    $$
    where $1\leq l<k\leq K_0\leq m$, $\alpha >0$. 
    Here, $T_{(1)}\leq\dotsc\leq T_{(K_0)}$ are defined as in Section \ref{sec:local-regularity}, that is the subvector of the $K_0$ closest values to $\T$. 
    We assume that $T$ admits a density $f$. Such moments will be considered with 
    $k$ and $l$ such that, for some fixed value $t_0\in [0,1]$ such that $f(t_0)>0$, 
    \begin{equation}\label{context}
        \frac{\max(\left|\lfloor t_0m\rfloor-k\right| , \left|\lfloor t_0m\rfloor -l\right| )}{m+1}  \leq 8 \frac{k-l}{m+1}
    \end{equation}
    and
    \begin{equation}\label{context2}
        \frac{k-l}{m+1} \quad \text{is small} ,
    \end{equation}
    and converges to zero when $m\rightarrow \infty. $ Herein, for any real number $a$, $\lfloor a \rfloor$ denotes the largest integer smaller than or equal to $a$. These conditions on $k$ and $l$ allows for $(k-l)$  increasing slower than $m$.
    
    Let us point out that $T_{(1)}\leq\dotsc\leq T_{(K_0)}$ defined in section \ref{sec:local-regularity} is not the order statistics from a random sample of $T$. In fact,  $T_{(k)}$, with $1\leq k \leq K_0$, is the  $(G+k)-$th order statistics of the sample $T_1,\ldots,T_m$. Here $G$ is a random variable and its value is determined by the way the subvector of $K_0$ closest values to $\T$ is built. It is important to notice that  $G$ 
    depends of the smallest and the largest values in this subvector, but is independent of the other components of the subvector.  
    In particular, this means that in the case where $T$ has a uniform distribution, the law of the spacings between  $T_{(1)}\leq\dotsc\leq T_{(K_0)}$ coincides with the law of the same type of spacings between the order statistics of a uniform sample of size $m$ on $[0,1]$. In particular, in the uniform case, the law of $T_{(k)} - T_{(l)} $ depends only on $m$ and $k-l$. For this reason, first we consider the case of $T$ with uniform law. In the general case, we use the transformation by the distribution function in order to get back to the uniform case.

    \subsection{The uniform case}
    
    Consider $U$ a uniform random variable on $[0,1]$. Let $U_1,\ldots,U_m$ be an independent sample of $U$ and let 
    $U_{(1)},\ldots,U_{(m)}$ be the order statistics. In the case of a uniform sample,  $U_{(k)} - U_{(l)}$ and $U_{(k-l)} $ have the same distribution, that is a beta distribution Beta$(k-l,m-(k-l)+1)$. Hence in this case, it is equivalent to study the moments of $U_{(r)} $ with $1\leq r = k-l  \leq m-1$. The variable $U_{(r)} $ has a Beta$(r,m-r+1)$ distribution. It also worthwhile to notice that $U_{(k)} - U_{(l)}$ and $U_{(l)}$ are independent, and the same is true for $U_{(k)} - U_{(l)}$ and $U_{(k)}$.
    
    By elementary calculations, we have 
    $$
    \mathbb{E}\left[ U_{(r)} ^\alpha  \right] = \frac{ B(\alpha+r,m-r+1)}{B(r,m-r+1)} = \frac{\Gamma (\alpha+r) }{\Gamma (r)} \frac{\Gamma (m+1) }{\Gamma (m+\alpha +1)},
    $$
    where $B(\cdot,\cdot)$ denotes the beta function and $\Gamma(\cdot)$ the gamma function. To derive the bounds for the moments of interest, we use some existing results on the approximation of the gamma functions and the ratios of the gamma functions. The results are recalled in Section \ref{wendel_app} below. 
    
    
    Let $M$ be a random variable taking positive integer values. In the following proposition we assume that, given the realization of $M\geq K_0$, $T_1 , \ldots, T_M$ be an independent sample with uniform distribution on $[0,1]$.  
    
    \begin{lemma}\label{firt1}
        Consider $0 < \alpha \leq 3$ and $1\leq l<k\leq m$, and let $r=k-l$. Then, for any $m\geq K_0$ in the support of $M$, 
        $$
        \left| \mathbb{E}\left[ (T_{(k)} - T_{(l)} )^\alpha \mid M =m \right] -  \frac{\Gamma (\alpha+r) }{\Gamma (r)} \frac{1}{(m+1)^\alpha}\right| \leq \frac{3}{m} \frac{\Gamma (\alpha+r) }{\Gamma (r)} \frac{1}{(m+1)^\alpha} ,
        $$
        and
        $$
        \left| \mathbb{E}\left[ (T_{(k)} - T_{(l)} )^\alpha \mid M =m \right] -  \left(\frac{r}{m+1}\right)^\alpha\right| \leq \left(\frac{r}{m+1}\right)^\alpha \left[\frac{3}{m} + \frac{4}{r} + \frac{12}{mr} \right].
        $$
    \end{lemma}
    
    \begin{proof}[Proof of Lemma \ref{firt1}] Given that $M=m$, $T_{(k)} - T_{(l)}$ is distributed as $U_{(r)}$, the $r-$th order statistic, with $1\leq r=k-l\leq m-1$, of an independent sample of size $m$ from the uniform law on $[0,1]$. 
        Using inequality \eqref{b7} with $x=m+1$ and $s=\alpha$,  we can write
        \begin{multline}
            \left| \mathbb{E}\left[ U_{(r)} ^\alpha \mid M =m \right] - \frac{\Gamma (\alpha+r) }{\Gamma (r)} \frac{1}{(m+1)^\alpha} \right| \\  = \frac{\Gamma (\alpha+r) }{\Gamma (r)}\frac{1}{(m+1)^\alpha}   \left|   \frac{(m+1)^\alpha \Gamma (m+1) }{\Gamma (m+\alpha +1)} - 1 \right| \\
            \leq \frac{3}{m} \frac{\Gamma (\alpha+r) }{\Gamma (r)} \frac{1}{(m+1)^\alpha}.
        \end{multline}
        Next, using inequality \eqref{b6} twice,  with $x=r$ and $s=\alpha$, and triangle inequality
        \begin{multline*}
            \left| \mathbb{E}\left[ U_{(r)} ^\alpha \mid M =m \right] -  \left(\frac{r}{m+1}\right)^\alpha\right| \\
            \leq \left| \mathbb{E}\left[ U_{(r)} ^\alpha \mid M =m \right] -  \frac{\Gamma (\alpha+r) }{\Gamma (r)} \frac{1}{(m+1)^\alpha}\right|
            + \left(\frac{r}{m+1}\right)^\alpha  \left| \frac{\Gamma (\alpha+r) }{r^\alpha \Gamma (r)}  - 1\right| \\
            \leq \left(\frac{r}{m+1}\right)^\alpha \left[\frac{3}{m} \frac{\Gamma (\alpha+r) }{r^\alpha \Gamma (r)} + \frac{4}{r}  \right] \\
            \leq \left(\frac{r}{m+1}\right)^\alpha \left[\frac{3}{m} \left(1+ \frac{4}{r}\right) + \frac{4}{r}  \right].
        \end{multline*}
    \end{proof}

    \subsection{The general case}
    
    Given the realization of $M$, let $T_1,T_2,\ldots$ be an independent sample from $T$, a random variable independent of $M$, with an absolute continuous distribution on $[0,1]$. Let $f$ (resp. $F$) (resp. $Q$) denote the density (resp. distribution function)  (resp. quantile function) of $T$.  We assume that $F$ is strictly increasing on $[0,1]$ and thus $Q$ is the inverse function for $F$, and $Q$ is differentiable with $Q^\prime = 1/f$. 
    Then, given $M=m$, for any $1\leq l<k\leq m$, the joint distribution of the order statistics $(T_{(k)} ,T_{(l)})$ is the same as the joint distribution of  $(Q(U_{(k)}) , Q(U_{(l)}))$, where $U_{(1)},\ldots,U_{(m)}$ is the order statistics of an independent uniform sample on $[0,1]$.

    
    Assume $\inf_{t\in[0,1]}f(t) >0$ and $f$ is H\"older continuous around $t_0$, i.e. there exists $L_f >0$, $0<\beta_f  \leq 1$,  and a neighborhood of $t_0$ in $[0,1]$ such that for any $u,v$ in this neighborhood, $|f(u)-f(v)|\leq L_f  |u-v|^{\beta_f }$. 
    
    \begin{lemma}\label{lem:moment-spacing}
        Let $m$ be an integer value in the support of $M$.
        Let $t_0\in[0,1]$,  assume that $k$ and $l$ are satisfying the conditions \eqref{context}-\eqref{context2}, and let $r=k-l$. The for any $0 < \alpha \leq 3$, 
        \begin{multline*}
            \left| \mathbb{E}\left[ (T_{(k)} - T_{(l)} )^\alpha \mid M =m \right] -  \frac{\Gamma (\alpha+r) }{\Gamma (r)} \left(\frac{1}{f(t_0) (m+1) }\right)^\alpha \right| \\
            \leq  \frac{\Gamma (\alpha+r) }{\Gamma (r)} \left( \frac{1}{f(t_0)(m+1)}\right)^\alpha\left[ \frac{3}{m} + C \left(\frac{r}{m+1} \right)^{\alpha \beta_f /4}\right] , \end{multline*}
            and
            \begin{multline}
                \left| \mathbb{E}\left[ (T_{(k)} - T_{(l)} )^\alpha \mid M =m \right] -  \left(\frac{r}{f(t_0)(m+1)}\right)^\alpha\right| \\
                \leq \left(\frac{r}{f(t_0)(m+1)}\right)^\alpha \left[\frac{3}{m} + \frac{4}{r} + \frac{12}{mr}  + C \left(\frac{r}{m+1} \right)^{\alpha \beta_f /4} \right]\\
                \leq \mathfrak{c}_0 \left(\frac{r}{f(t_0)(m+1)}\right)^\alpha \left[\frac{1}{m} + \frac{1}{r} + \frac{1}{mr}  + \left(\frac{r}{m+1} \right)^{\alpha \beta_f /4} \right],\label{eq:c0} 
            \end{multline}
            with $C$ and $\mathfrak{c}_0$ are two constants depending only on $\alpha$ and $L_f ,\beta_f $ and 
            $f(\T)$. 
        \end{lemma}
        
        \begin{proof}[Proof of Lemma \ref{lem:moment-spacing}] In the following, we use several times the following property: for any $a,b,\alpha \geq 0$, 
            $$
            (a+b)^\alpha \leq \max(1,2^{\alpha - 1}) \left( a^\alpha + b^\alpha  \right).
            $$  
            Next, given $M=m$, 
            $$\mathbb{E}\left[ (T_{(k)} - T_{(l)} )^\alpha \mid M =m \right] =\mathbb{E}\left[ \{Q(U_{(k)})  - Q(U_{(l)}) \}^\alpha \mid M =m \right].$$ 
            By a first order Taylor expansion of $Q(U_{(k)})$  around the point $U_{(l)}$, we get
            \begin{equation}\label{eqa}
                Q(U_{(k)}) - Q(U_{(l)}) =\frac{1}{f(t_0)} \left[ U_{(k)} - U_{(l)}\right] \left[1+ r(m,k,l) \right],
            \end{equation}
            with 
            $$
            r(m,k,l)=  \int_0^1 \frac{f(t_0) - f(U_{(l)}+ t[U_{(k)} - U_{(l)}])}{f(U_{(l)}+ t[U_{(k)} - U_{(l)}])}dt.
            $$
            Note that due to the fact the $Q$ is increasing and almost surely$U_{(k)} > U_{(l)}$, the identity \eqref{eqa} implies that $1+r(m,k,l)>0$ almost surely.  
            Using the triangle inequality and the properties of $f$, 
            $$
            | r(m,k,l)| \leq \frac{L_f }
            {f(\T)/2}
            \left(| U_{(l)} - t_0 |^{\beta_f } + |  U_{(k)} - U_{(l)}|^{\beta_f }\right).
            $$
            Let 
            $$
            t_m = \frac{\lfloor t_0(m-1)\rfloor+1}{m+1}. 
            $$ 
            Note that $1/(m+1) \leq t_m \leq m/(m+1) $ and 
            $$
            t_m = \mathbb{E}[  U_{(t_m (m+1) )}] .
            $$
            Next, we can bound
            \begin{multline*}
                | U_{(l)} - t_0 | \leq | U_{(l)} - t_m |  + | t_m - t_0 | \leq | U_{(l)} - \mathbb{E}[ U_{(t_m (m+1) )}] | + \frac{2}{m+1} \\
                \leq  | U_{(l)} -  U_{(t_m (m+1) )} | + | U_{(t_m (m+1) )}- \mathbb{E}[  U_{(t_m (m+1) )}] | + \frac{2}{m+1} .
            \end{multline*}
            Thus, with the convention $U_{(0)} = 0$, 
            \begin{multline*}
                \mathbb{E}\left[  | U_{(l)} - t_0 | ^{\beta_f } \mid M =m \right] \leq  \mathbb{E}\left[  U_{(|l-t_m(m+1)|)}  ^{\beta_f } \mid M =m \right] \\ +  \mathbb{E}\left[  \left|U_{(t_m (m+1) )}- \mathbb{E}[  U_{(t_m (m+1) )}] \right| ^{\beta_f } \mid M =m \right]  +  \left( \frac{2}{m+1} \right)^{\beta_f }.
            \end{multline*}
            By the facts presented in the uniform case, when $l\neq t_m(m+1) $, 
            $$
            \mathbb{E}\left[  U_{(|l-t_m(m+1)|)}  ^{\beta_f } \mid M =m \right] = \frac{\Gamma (\beta_f + | l-t_m(m+1)|) }{\Gamma (|l-t_m(m+1)|)} \frac{\Gamma (m+1) }{\Gamma (m+ \beta_f  +1)},
            $$
            and using Wendel's double inequality \eqref{wen1}
            with $s=\beta_f $, and  \eqref{context}, the product of the ratios of the gamma functions is bounded from above by
            $$
            \left(\frac{|l-t_m(m+1)|}{m+1+\beta_f } \right)^{\beta_f } \left(1+\frac{\beta_f }{m+1} \right)\leq 
            9\left(\frac{r}{m+1}\right)^{\beta_f }.
            $$
            On the other hand, using Jensen's inequality and the variance of a beta distribution with parameters $t_m(m+1)$ and $(1-t_m)(m+1)$, 
            \begin{multline*}
                \mathbb{E}\left[  \left|U_{(t_m (m+1) )}- \mathbb{E}[  U_{(t_m (m+1) )}] \right| ^{\beta_f } \mid   M = m \right] \\ \leq \mathbb{E}^{\beta_f /2}\left[  \left|U_{(t_m (m+1) )}- \mathbb{E}[  U_{(t_m (m+1) )}] \right| ^{2} \mid  M  = m \right]  =\left(\frac{t_m(1-t_m)}{m+2} \right)^{\beta_f /2}.
            \end{multline*}
            Gathering facts and using Lemma \ref{firt1}, there exists a constant $c$ such that 
            \begin{equation*}
                \mathbb{E}\left[  | U_{(l)} - t_0 | ^{\beta_f } \mid M =m \right] \leq c \left(\frac{r}{m+1} \right)^{\beta_f /2}.
            \end{equation*}
            On the other hand, since $ U_{(k)} - U_{(l)}$ is independent of $U_{(l)}$, from above and Lemma \ref{firt1} we deduce that for any $0 <\alpha^\prime \leq \alpha \leq 3$, 
            \begin{equation}\label{rt1}
                \mathbb{E}\left[ \{U_{(k)} - U_{(l)}\}^\alpha | r(m,k,l)|^{\alpha^\prime} \mid M =m \right] \leq C \left(\frac{r}{m+1} \right)^{\alpha + \alpha^\prime \beta_f /2 } 
            \end{equation}
            for some constant $C$ depending on 
            $L_f ,\beta_f $ and $f(\T)$.
            
            Coming back to relationship (\ref{eqa}), taking power $\alpha$ on both sides of the identity, we can write
            $$
            \mathbb{E}\left[  \{Q(U_{(k)}) - Q(U_{(l)})\}^\alpha   \mid M =m \right]  =\frac{1}{f^\alpha (t_0)} \mathbb{E}\left[  U_{(r)}^\alpha \mid M =m\right]+ R(m,k,l)
            $$
            with 
            $$
            R(m,k,l)=\mathbb{E}\left[ \{U_{(k)} - U_{(l)}\}^\alpha \{\left[1+ r(m,k,l) \right]^\alpha -1 \}\mid M =m\right] .
            $$
            Since for any $a>-1$ and $0<\alpha \leq 3$,
            $$
            |(1+a)^\alpha - 1| =  |(1+a)^{\alpha/2} - 1| |(1+a)^{\alpha/2} + 1|\leq 2|a|^{\alpha/2} (|a|^{\alpha/2}+2),
            $$
            using the bound \eqref{rt1} with $\alpha^\prime =\alpha$ and $\alpha^\prime = \alpha/2$, 
            $$
            |R(m,k,l)| \leq c_R \left(\frac{r}{m+1} \right)^{\alpha (1+\beta_f /4)},
            $$
            for some constant $c_R$ depending on  
            $L_f ,\beta_f $ and $f(\T)$.
            It remains to apply Lemma \ref{firt1} to complete the proof. 
        \end{proof}

        \subsection{Wendel's type inequalities for gamma function ratios}\label{wendel_app}
        
        Since in our case, we only need to consider  $\alpha \in (0,3]$, we could use the sharp bounds for the ratio of two gamma functions, as deduced by \cite{MR29448}. For any $x>0$ and $s\geq 0$, let
        $$
        R(x,s) = \frac{\Gamma (x+s) }{\Gamma (x) }.
        $$
        \cite{MR29448} proved that when $0\leq s \leq 1$, 
        \begin{equation}\label{wen1}
        \left(\frac{1}{1+ s/x}\right)^{1-s} \leq \frac{R(x,s)}{x^s} \leq 1. 
        \end{equation}
        Since 
        $$
       1-\frac{s}{x} \leq  \left(\frac{1}{1+ s/x}\right)^{1-s} ,\qquad \forall x\geq 1, 0\leq s \leq 1,
        $$
        we can deduce that, when $0\leq s \leq 1$,
        \begin{equation}\label{b1}
            1-\frac{1}{x} \leq  1-\frac{s}{x} \leq \frac{R(x,s)}{x^s} \leq 1,\qquad \forall x\geq 1. 
        \end{equation}
        Next, using the recurrence formula for the gamma function, when $1\leq s \leq 2$ we can write
        $$
        \frac{R(x,s)}{x^s}  = \left(1+\frac{s-1}{x}\right) \frac{R(x,s-1)}{x^{s-1}} 
        $$
        and deduce 
          \begin{equation}
              1-\frac{1}{x} \leq  \left(1+\frac{s-1}{x}\right)  \left(1-\frac{s-1}{x}\right) \leq \frac{R(x,s)}{x^s} \leq   1+\frac{s-1}{x}  \leq   1+\frac{1}{x} ,\quad \forall x\geq 1.
        \end{equation}
        For our purpose, we could deduce the following bounds: for any $0\leq s \leq 2$, 
        \begin{equation}\label{b4}
            1-\frac{1}{x} \leq  \frac{R(x,s)}{x^s} \leq      1+\frac{1}{x} ,\qquad \forall x\geq 1,
        \end{equation}
        and
        \begin{equation}\label{b5}
            1-\frac{1}{x-1} \leq  \frac{x^s}{R(x,s)} \leq      1+\frac{1}{x-1} ,\qquad \forall x\geq 2.
        \end{equation}
        Finally, using again the recurrence formula for the gamma function, when $2 \leq s \leq 3$, we can write
        $$
        \frac{R(x,s)}{x^s}  = \left(1+\frac{s-1}{x}\right)  \left(1+\frac{s-2}{x}\right) \frac{R(x,s-2)}{x^{s-2}} 
        $$
        and deduce, for $2 \leq s \leq 3$, and $x\geq 2$,
        $$
        \frac{R(x,s)}{x^s} \leq  \left(1+\frac{s-1}{x}\right)  \left(1+\frac{s-2}{x}\right)=1+\frac{3}{x}+ \frac{2}{x^2}\leq 1+\frac{4}{x},\quad \forall x\geq 2,
        $$
        and
        $$
        \frac{x^s}{R(x,s)} \geq 1 - \frac{3x+2}{(x+2)(x+1)} \geq 1 - \frac{3}{x+2}\geq 1 - \frac{3}{x},\quad \forall x\geq 1.
        $$
        On the other hand, 
        $$
        \frac{R(x,s)}{x^s} \geq \left( 1+\frac{1}{x} \right)  \frac{R(x,s-2)}{x^{s-2}} \geq \left( 1+\frac{1}{x} \right)  
        \left( 1-\frac{s-2}{x} \right) \geq 1-\frac{1}{x}
        $$
        and
        $$
        \frac{x^s}{R(x,s)} \leq \frac{x}{x+1} \frac{x^{s-2}}{R(x,s-2)} \leq \frac{x}{x+1} \; \frac{x}{x-(s-2)}
        \leq \frac{x^ 2}{x^2 -1} \leq      1+\frac{1}{x-1}.
        $$
        Gathering facts,  for $0 \leq s \leq 3$
        \begin{equation}\label{b6}
            \left| \frac{R(x,s)}{x^s} -1 \right| \leq     \frac{4}{x}, \qquad \forall x\geq 2,
        \end{equation}
        and
        \begin{equation}\label{b7}
            \left|  \frac{x^s}{R(x,s)} -1 \right| \leq     \frac{3}{x-1} ,\qquad \forall x\geq 2.
        \end{equation}

\section{Additional simulation results}\label{sec_add_simu}
       
       \subsection{The settings} 
In our simulations, we use three types of stochastic processes to generate the trajectories of $X$ that we recall in the following. 
\begin{itemize}
    \item \textbf{Setting 1}: \emph{Fractional Brownian motion.} The curves are generated using a classical fractional  Brownian motion with constant Hurst parameter $H\in (0,1)$. In this case, the local regularity of the process is the same at every point. Figure \ref{fig:bm} illustrates one realization of this setting.
    \item \textbf{Setting 2}: \emph{Piecewise fractional Brownian motion.} The curves are generated as a concatenation of multiple fractional Brownian motions with different regularities, that is with different Hurst parameters for different time periods. In this case, the local regularity is no longer constant. Figure \ref{fig:pbm} illustrates one realization of this setting.
    \item \textbf{Setting 3}: \emph{Integrated fractional Brownian motion.} The curves $X_t$ are obtained as  integrals $\int_0^t W_H(s)ds$, $t\in[0,1]$,  of the paths of a fractional Brownian motion process $W_H$ with constant Hurst parameter $H$. Here, the local regularity of the process is the same at each point but will be greater than $1$, thus this setting corresponds to the case of smooth trajectories. Figure \ref{fig:ibm} illustrates one realization of this setting.
\end{itemize}

\begin{figure}
    \centering
    
    \begin{subfigure}{\textwidth}
        \centering
        \includegraphics[scale=0.225]{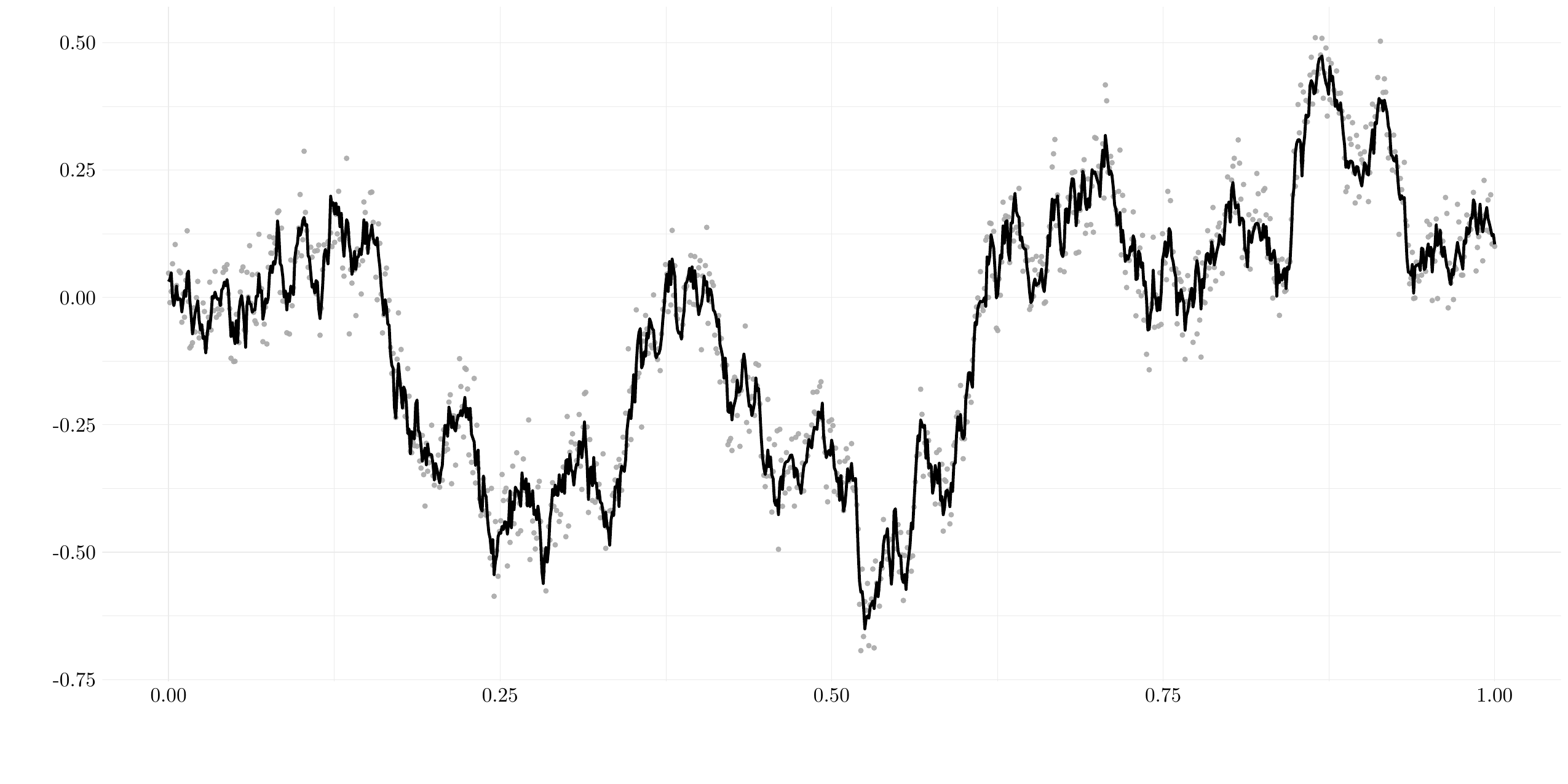}
        \caption{Brownian motion}
        \label{fig:bm}
    \end{subfigure}
    
    \vspace{0cm}
    
      \begin{subfigure}{\textwidth}
        \centering
        \includegraphics[scale=0.225]{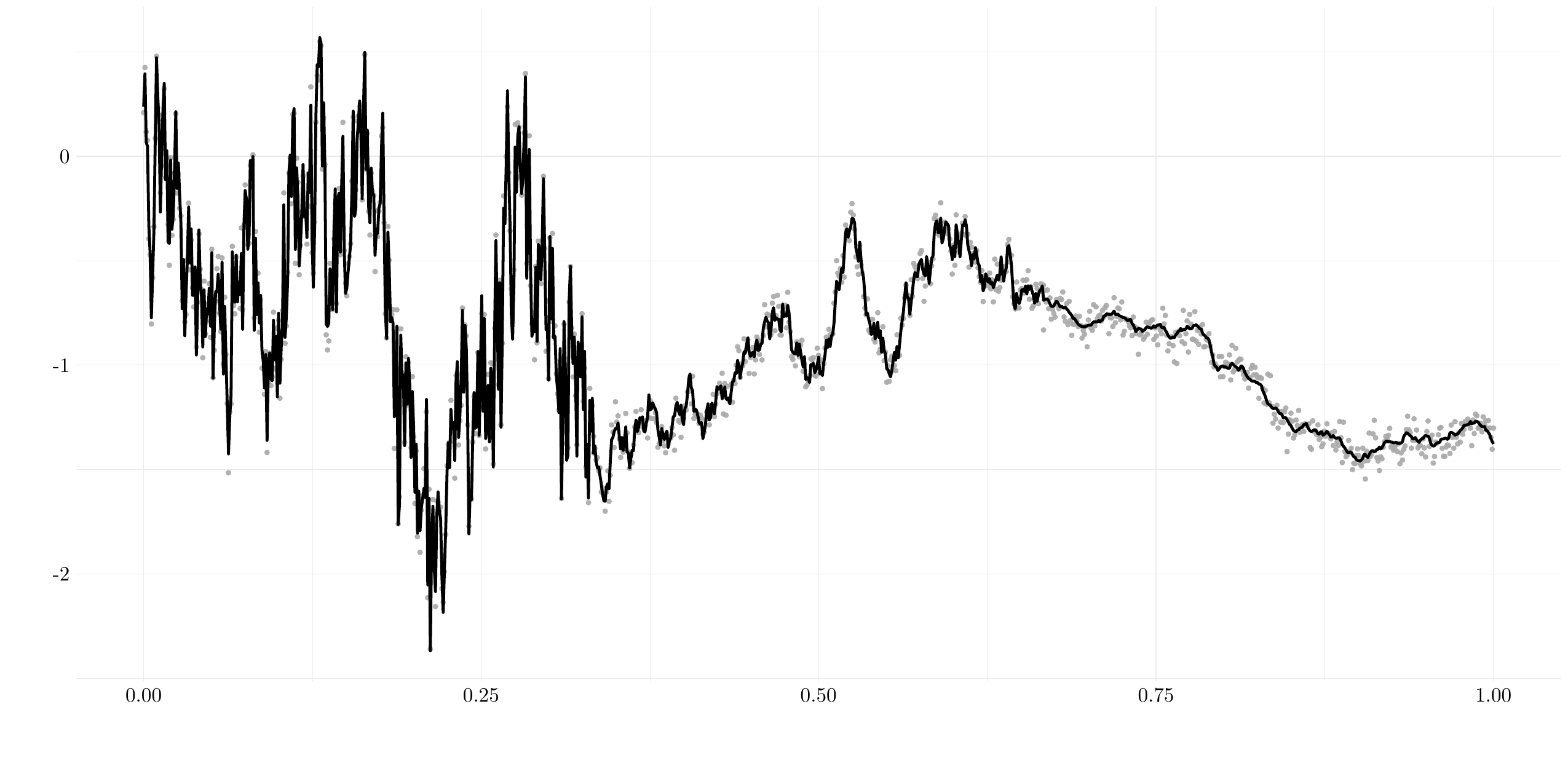}
        \caption{Piecewise Brownian motion}
        \label{fig:pbm}
    \end{subfigure}
    
    \vspace{0cm}
    
  \begin{subfigure}{\textwidth}
        \centering
        \includegraphics[scale=0.225]{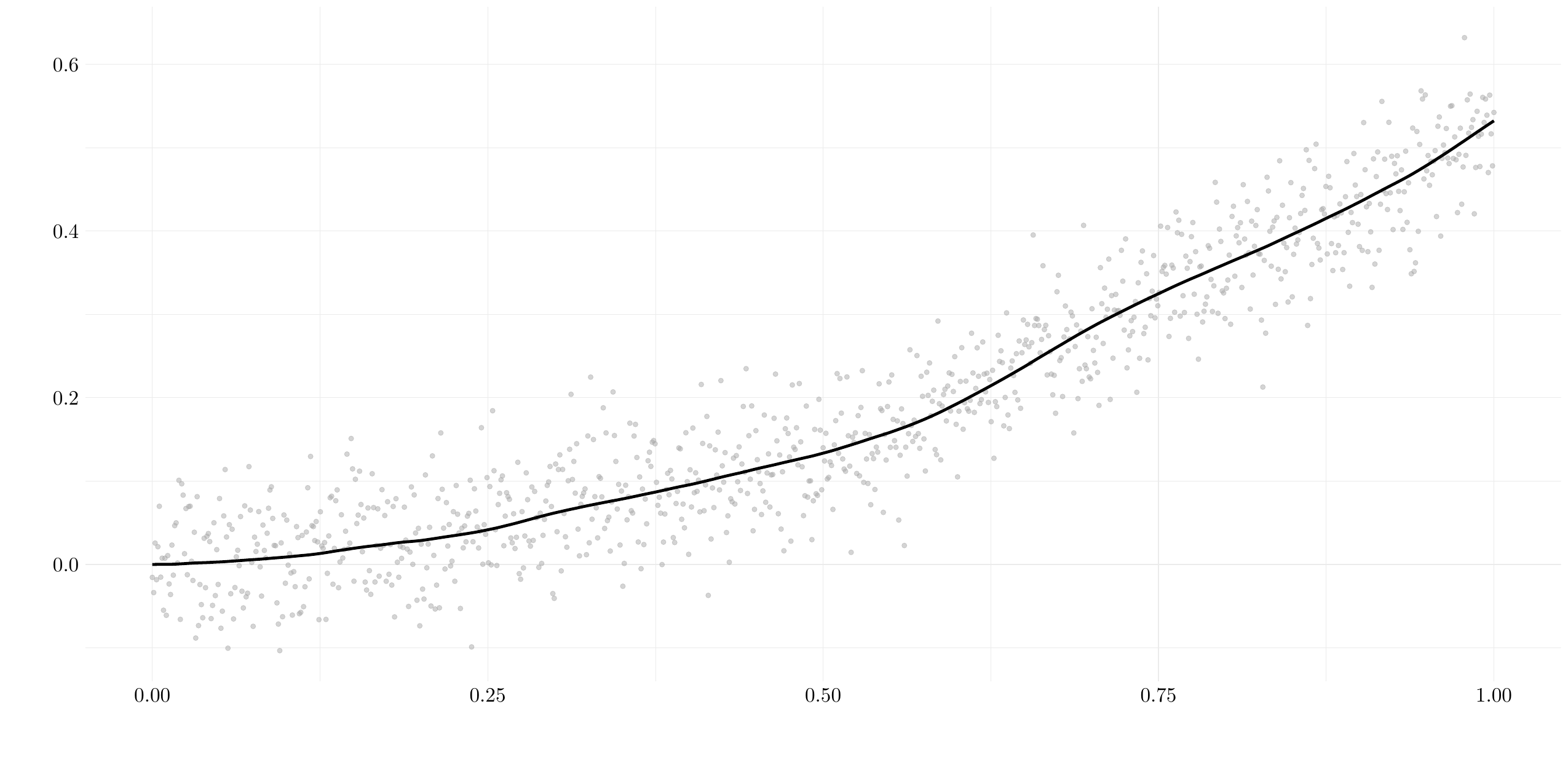}
        \caption{Integrated Brownian motion}
        \label{fig:ibm}
    \end{subfigure}
    
    \caption{Illustrations of simulated data generated according to the different settings. The curves correspond to the generated trajectories without noise that we aim to recover, and the grey points correspond to the noisy measurements.}
    \label{fig:examples}
\end{figure}

\subsection{On the computation time}

Figure \ref{fig:set1compu} presents the violin plots of the needed time to smooth $N_1 = 1000$ curves. The results are obtained with the parameters of the simulation $(1, 1000$, $1000, 300, \texttt{equi}, 0.5, 0.05)$. They correspond to the total CPU time (system time and user time) to estimate the bandwidth $h_n$ and then estimate the curves at their sampling points. We perform these computations on a personal computer equipped with a processor Intel Core i7-6600U, CPU: 2.60\si{\giga\hertz}, RAM: 24\si{\giga\octet} and rerun the estimation $10$ times. We observe that our smoothing device outperforms cross-validation and \emph{plug-in} in terms of computation time: about $1000$ times faster than the cross-validation. Let $\mathcal{H}_n$ be a set of bandwiths. For the cross-validation, we may explain these differences because of the computation of the estimator for each bandwidth in $\mathcal{H}_n$ and each curve $X^{(n)}$ of the sample ($N_1 \times \text{Card}(\mathcal{H}_n)$ calls to the estimation function) while our estimator requires only one estimation of the regularity of the functions and one evaluation of the estimator per curve ($N_1$ calls to the estimation function). 
In a similar way, figure \ref{fig:set3compu} presents the violin plots of the time necessary to smooth $N_1 = 1000$ curves with the parameters of the simulation $(3, 1000, 1000, 1000, \texttt{equi}, 1.7, 0.005)$. The same personal computer is used and the simulation is also run $10$ times. For   setting $3$, our procedure is slower than for   setting $1$, which can be easily explained by the computation of the derivatives of  each curve $X^{(n)}$. However, the computation time for the cross-validation is still not comparable with ours.

\begin{figure*}
    \centering

    \begin{subfigure}{.475\textwidth}
        \centering
        \includegraphics[scale=0.45]{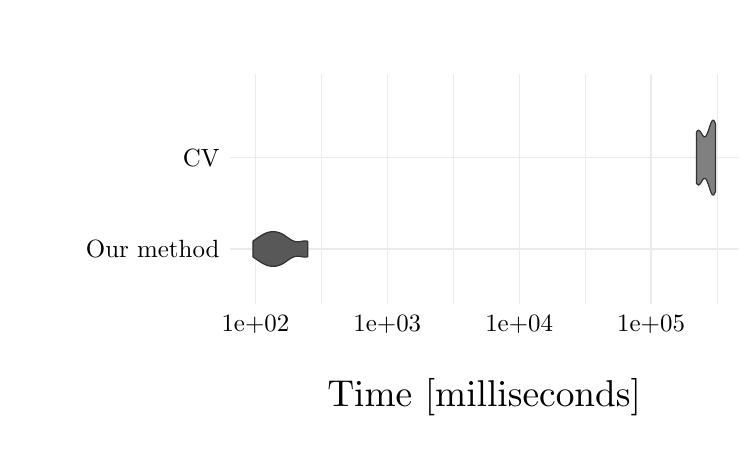}
        \caption{For setting 1}
        \label{fig:set1compu}
    \end{subfigure}
    \begin{subfigure}{.475\textwidth}
        \centering 
        \includegraphics[scale=0.45]{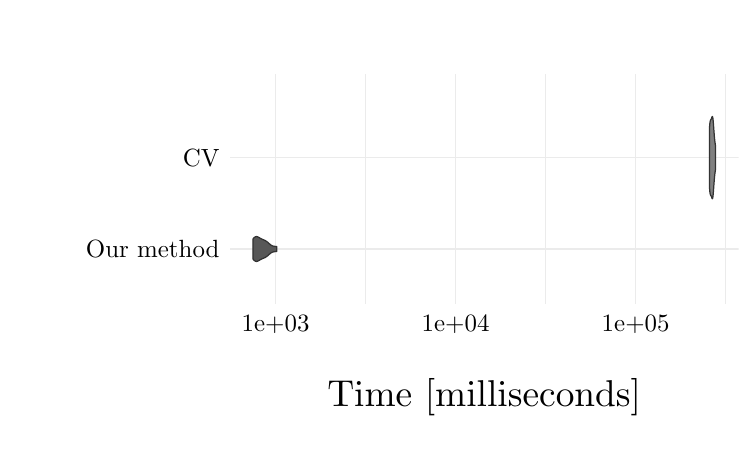}
        \caption{For setting 3}
        \label{fig:set3compu}
    \end{subfigure}
    
    \caption{Computational times (log scale)}
    \label{fig:compu_time}
\end{figure*}

\subsection{On the estimation of the local regularity}

Figure \ref{fig:set1_H} presents the results for the local regularity estimation for fBm with homoscedastic noise. The local esitmation of $\HT$ is performed at $\T = 1/2$ which correspond to the middle of the interval. The true value of $\HT$ is $0.5$. The results show an accurate estimator $\hHT$, except, maybe, for the simulation $(1, 250, 500, 1000, \texttt{equi}, 0.5, 0.05)$ where there is not enough curves compared to the number of sampling points.
\begin{figure}
	\centering
	\includegraphics[scale=0.45]{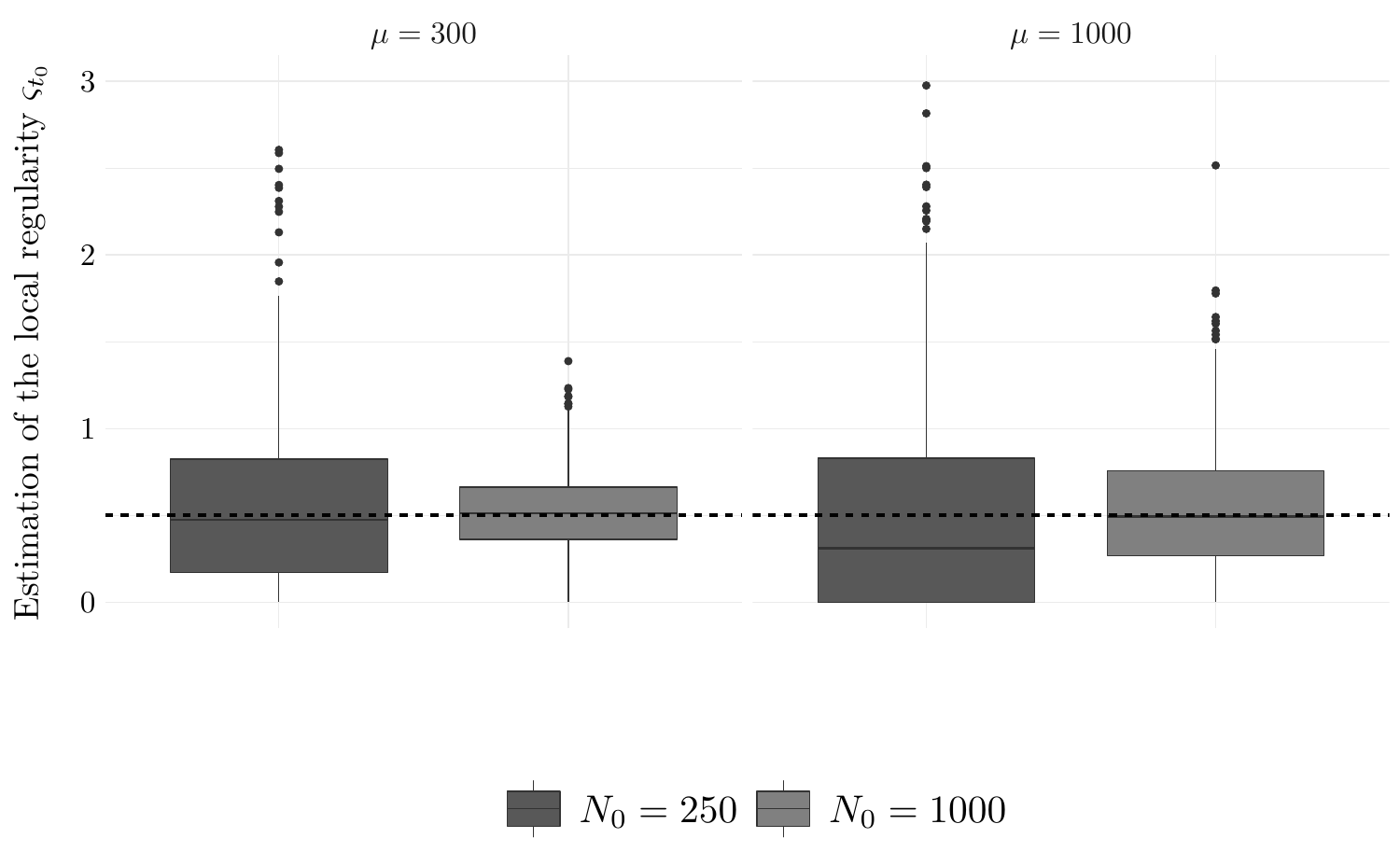}
	\caption{Estimation of the local regularity for fBm, with constant noise variance $\sigma^2 = 0.05$, at $t_0 = 1/2$. True value: $\varsigma_{t_0} = 0.5$.}
	\label{fig:set1_H}
\end{figure}

Figure \ref{fig:set2_H_hetero} presents the results for the local regularity estimation for piecewise fBm with heteroscedastic noise. The local estimations of $\HT$ are performed at $\T = 1/6, 1/2$ and $5/6$ which correspond to the middle of the interval for each regularity. The true values of $\HT$ are $0.4, 0.5$ and $0.7$, respectively. The true values of $\sigma^2$ are $0.04, 0.05$ and $0.07$, respectively. The results show an accurate estimator $\hHT$.
\begin{figure}
	\centering
	\includegraphics[scale=0.475]{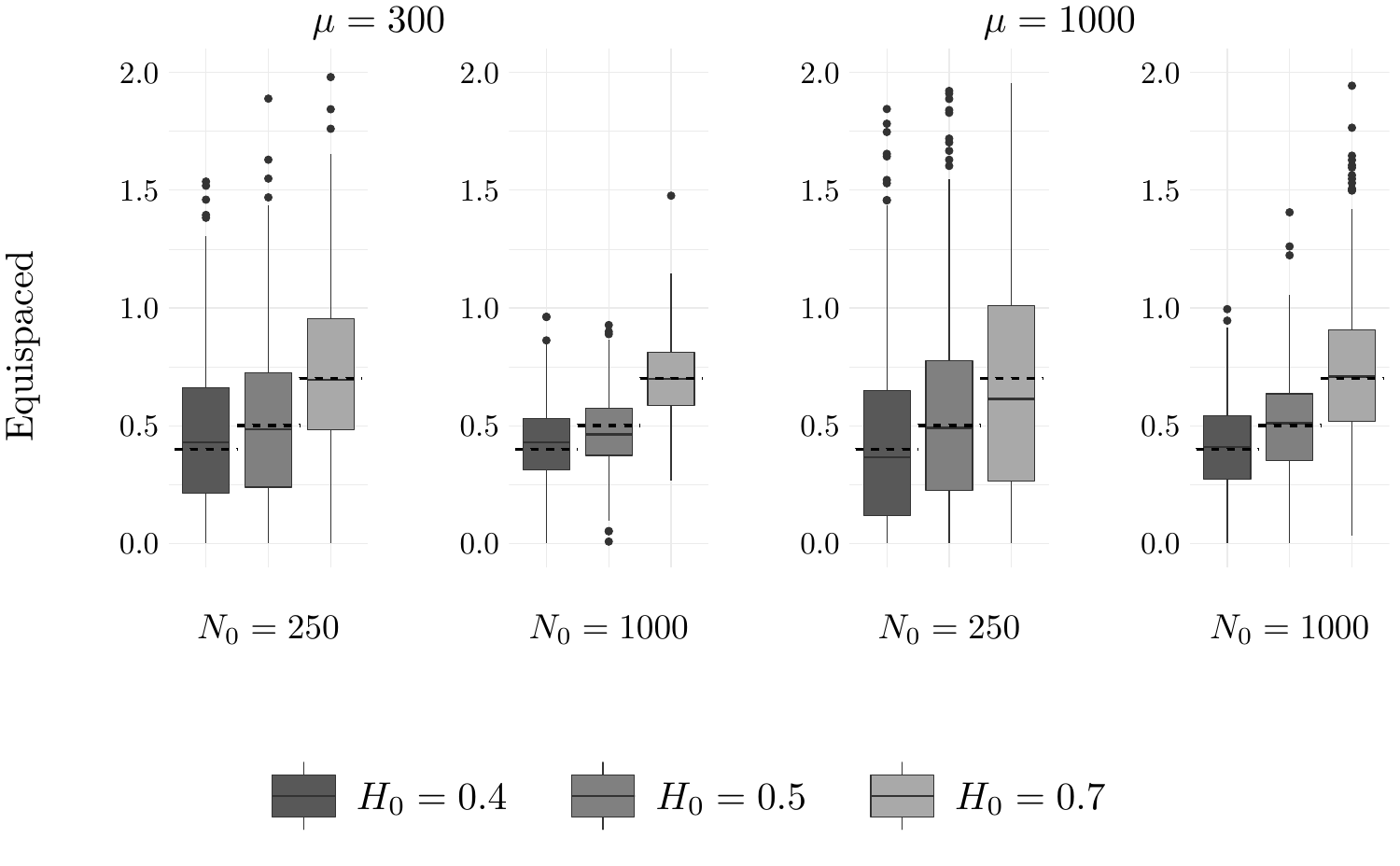}
	\caption{Estimation of the local regularity for piecewise fBm, with non-constant noise variance $\sigma^2 = 0.04, 0.05$ and $0.07$, at $\T = 1/6, 1/2$ and $5/6$, respectively. True values: $\varsigma_{\T} = \HT$ equal to $0.4, 0.5$ and $0.7$, respectively.}
	\label{fig:set2_H_hetero}
\end{figure}

\subsection{On the pointwise risk}

For technical convenience, in our theoretical study, we only considered the case where the regularity estimator $\ST$ is applied with an independent sample. If one wants to smooth the curves in the learning set, one can use a leave-one-out method. That is, for each curve, one can estimate the local regularity without that curve, and smooth the curve with the estimate obtained. Our method for calculating $\hHT$ is very fast, and such a leave-one-curve-out procedure is feasible. This idea was used to analyze the NGSIM data. However, one could also simply smooth the \emph{learning} set curves using the same local regularity estimates obtained from this dataset.
Figure \ref{fig:set2_same} presents the estimation of the risks $\mathcal R (\widehat X;1/6)$, $\mathcal R (\widehat X;0.5)$ and $\mathcal R (\widehat X;5/6)$ for piecewise fBm, with constant noise variance $\sigma^2=0.05$, when the training and the test set are the same. The simulation results indicate that our theoretical results could be extended to the case where the \emph{online} set is taken equal to the \emph{learning} set, though the concentration deteriorates. The theoretical investigation of this issue is left for future work. 
\begin{figure}
	\centering
	\includegraphics[scale=0.45]{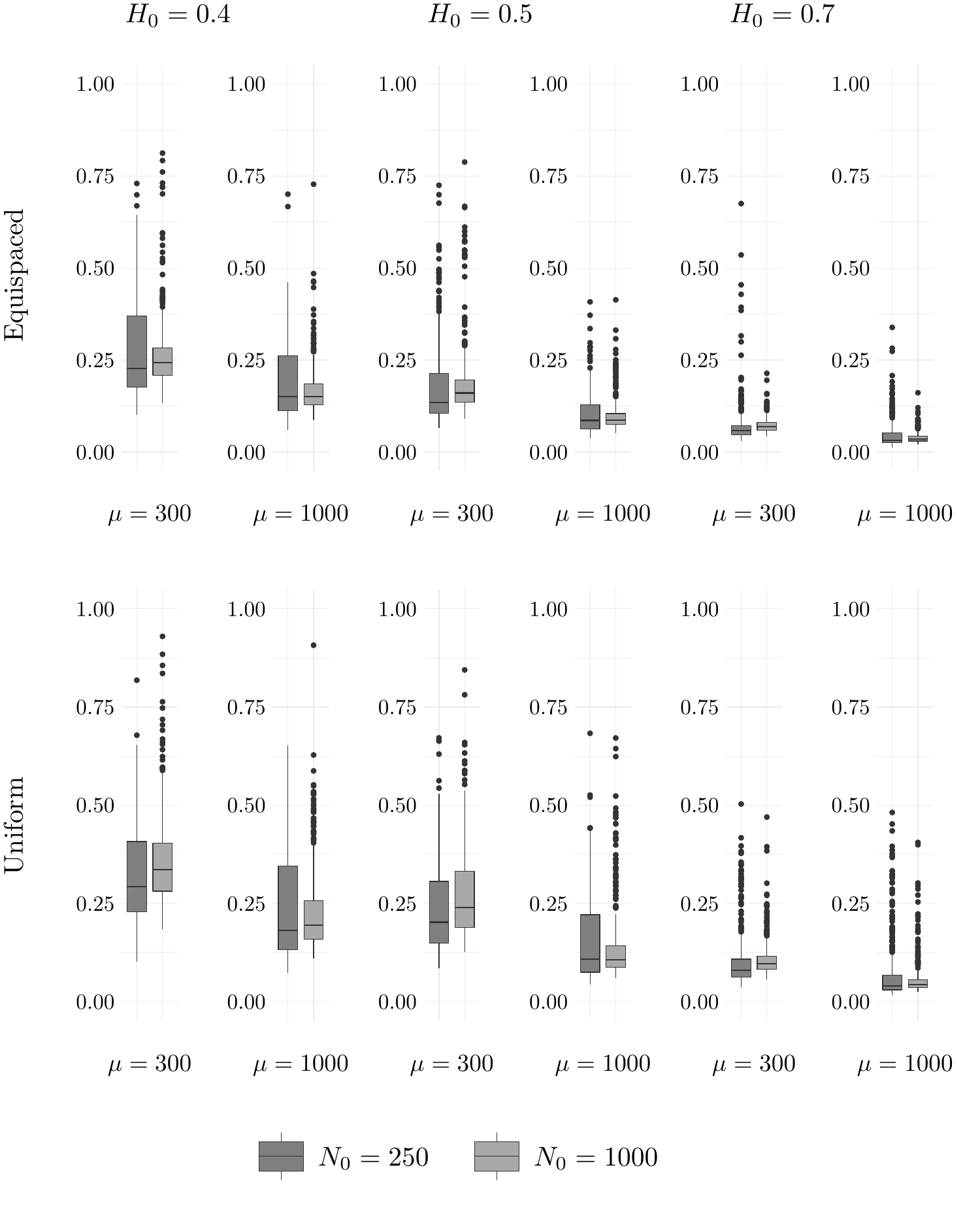}
	\caption{Estimation of the risks $\mathcal R (\widehat X;1/6)$, $\mathcal R (\widehat X;0.5)$ and $\mathcal R (\widehat X;5/6)$
     for piecewise fBm, with constant noise variance $\sigma^2=0.05$, when the training and the test set are the same.}
	\label{fig:set2_same}
\end{figure}
Figure \ref{fig:set1_risk} presents the estimation of the risks $\mathcal R (\widehat X;0.5)$ for fBm, with constant noise variance $\sigma^2=0.05$.
\begin{figure}
	\centering
	\includegraphics[scale=0.45]{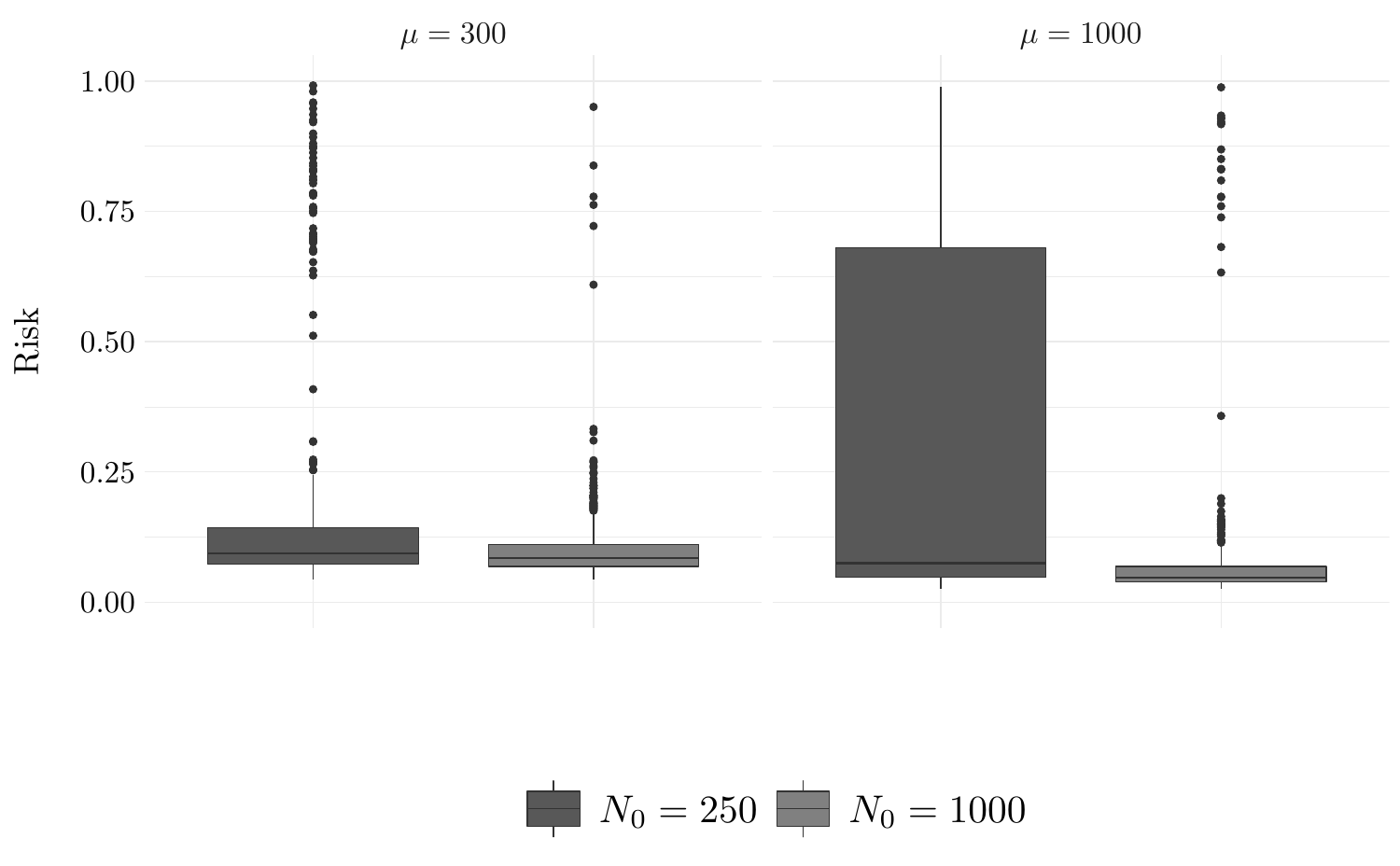}
	\caption{Estimation of the risk $\mathcal R (\widehat X;0.5)$ for smoothing the noisy trajectories of a fBm, with constant noise variance $\sigma^2 = 0.05$.}
	\label{fig:set1_risk}
\end{figure}
Figure \ref{fig:set2_risk_hetero} presents the estimation of the risks $\mathcal R (\widehat X;1/6)$, $\mathcal R (\widehat X;0.5)$ and $\mathcal R (\widehat X;5/6)$ for piecewise fBm, with heteroscedastic noise. The conclusion are the same than the homoscedastic case.
\begin{figure}
	\centering
	\includegraphics[scale=0.45]{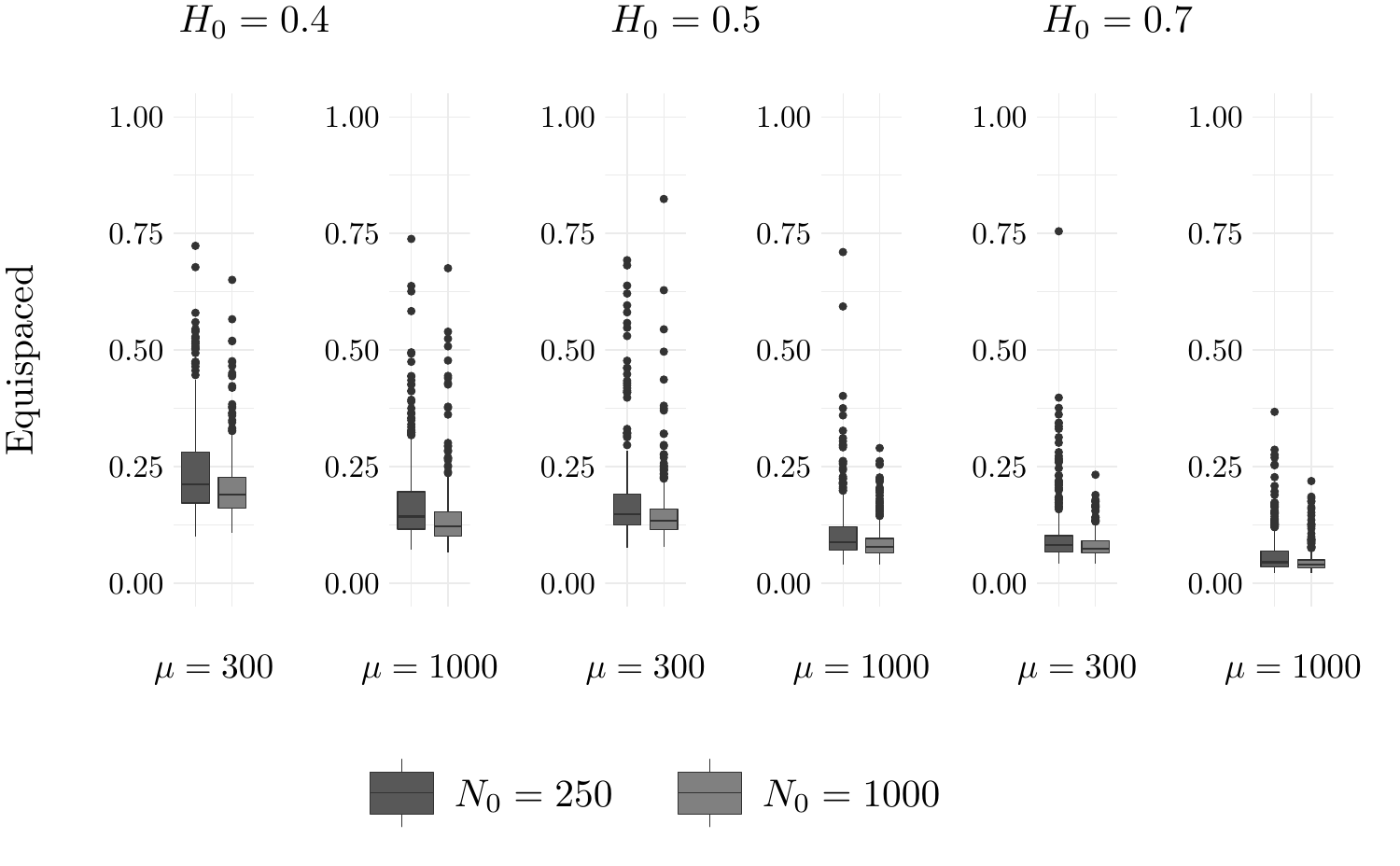}
	\caption{Estimation of the risks $\mathcal R (\widehat X;1/6)$, $\mathcal R (\widehat X;0.5)$ and $\mathcal R (\widehat X;5/6)$
     for piecewise fBm, with non-constant noise variance $\sigma^2 = 0.04, 0.05$ and $0.07$.}
	\label{fig:set2_risk_hetero}
\end{figure}

\subsection{Details on the constant of the bandwidth $h_{opt}$}\label{det_bws}

When the regression function admits a  derivative of order $\KT$ which is Hölder continuous in a neighborhood of $\T$, with exact exponent $\HT$ and  local Hölder constant $L_\T$, 
the optimal bandwidth for local polynomial smoothing proposed by \cite{tsybakov2009} is
\begin{equation}\label{eq:b_SM}
h_{opt} = \left(\frac{C}{M}\right)^{1/(2\ST + 1)}
\quad \text{with}
\quad C = C_\T = \frac{q_2}{2\ST q_1^2}
\end{equation} 
with 
$$
q_1 = C_* L_\T / \lfloor \ST \rfloor !  \quad \text{ and }\quad 
q_2 = \sigma^2_\T C_*^2.
$$ 
Here, $C_*$ is the constant defined on page 39 of \cite{tsybakov2009}. 
Let us recall the notation used by  \cite{tsybakov2009} for the local polynomial estimator of a regression function $r(\cdot)$, at the point $t$, using a sample $(Y_1,T_1),\ldots, (Y_M,T_M)$: 
$$
\hat r(\T) = \sum_{m=1}^M Y_m W_{Mm}(\T).
$$
In the case of the Nadaraya-Watson (NW) estimator, 
$$
W_{Mm}(\T) = \frac{1}{Mh}  \frac{K((T_m - \T)/h)}{\hat f (t)},
$$
where
$$ 
\hat f (\T) =\frac{1}{Mh}   \sum_{j=1}^M K((T_j - \T)/h) \approx f(\T).
$$
A closer look at the proof of Proposition 1.13  of  \cite{tsybakov2009} reveals that the absolute value of the bias is bounded by 
$$
h^{\ST}\frac{L_\T }{\lfloor \ST \rfloor ! } \frac{1}{\hat f (\T)} \sum_{m=1}^M |(T_m - \T)/h|^{\ST} K((T_m - \T)/h)\approx  \frac{h^{\ST}L_\T }{\lfloor \ST \rfloor ! } \int   K(v)   \lvert v \rvert^{\ST}dv.
$$
Meanwhile, the conditional variance of the NW estimator given the $T_m$ can be bounded  by
$$
\sigma^2_\T\frac{1}{Mh \hat f^2 (\T)}  \frac{1}{Mh} \sum_{m=1}^M K^2((T_m - \T)/h) \approx \sigma^2_\T\frac{1}{Mh f (\T)}\int   K^2(v)   dv.
$$
Given that $f (\T)$  can be estimated using the data points from all the curves,   the density of the $\Tnm$ can be estimated with high accuracy. We therefore use the true value $f(\T)$ in our simulations, which in the case of a uniform design is equal to 1.

\section{Traffic flow: Montanino and Punzo \cite{montanino_making_2013} methodology}\label{add_real_data_det}

Montanino and Punzo \cite{montanino_making_2013} presents a four steps methodology to make the NGSIM data usable. For a complete description of the steps, we let the reader refer to their article \cite{montanino_making_2013}. We briefly summarize their method here. The four steps below are applied for each trajectory separately. 

\begin{step} \emph{Removing the outliers}

They remove the measurements that lead to unreliable values of the acceleration by cutting all the records above a deterministic threshold of $30$ \si[per-mode=symbol]{\meter\per\second\squared}. The missing points are interpolated using a natural cubic spline with $10$ reference points before and after the outliers.
\end{step}

\begin{step}  \emph{Cutting off the high- and medium-frequency responses in the speed profile}

They remove the noise from the signal by linear smoothing of the signal with low-pass filter. The considered one is a first-order Butterworth filter \cite{butterworth_theory_1930} with cutoff frequency of $1.25$ \si{\hertz}.
\end{step}

\begin{step}  \emph{Removing the residual unphysical acceleration values, keeping the consistency requirements}

They remove residual peaks that exceed defined thresholds (varying with speed levels). For that, they move the position of the vehicle when the peak in acceleration appears in order to fulfill the thresholds. In order to prevent inconsistency, a $5$th-degree polynomial interpolation with constraint on the space traveled plus minor conditions was applied on a $1$\si{\second} window around the peak points.
\end{step}

\begin{step} Cutting off the high- and medium-frequency reponses generated from step 3

This step is the same as the step 2 but using the results of the step 3.

\end{step}

The methodology of  \cite{montanino_making_2013} seems  very specific to the NGSIM dataset, or at least some trajectory dataset, and by extension can not be easily applied to others. For using the algorithm on other trajectory datasets, their method requires some fine-tuning of the parameters. 

As explained in the main text, the 1714 observation units from the  I-80 dataset, available in the NGSIM study, have been recorded at moments of the day when traffic is evolving, it goes from fluid to dense traffic. Therefore, we consider that there are three groups in the data: a first group corresponding to a  fluid (high-speed) traffic, a second one for in-between fluid and dense  traffic, and a third groups corresponding to the dense (low-speed) traffic. Our local regularity approach, and the kernel smoothing induced, are applied for each group separately. The three group clustering was performed using a Gaussian mixture model estimated by an EM algorithm initialized by hierarchical model-based agglomerative clustering as proposed by Fraley and Raftery \cite{fraley_model_2002} and implemented in the \textbf{\textsf{R}} package \texttt{mclust} \cite{scrucca_mclust_2016}. The optimal model is then selected according to BIC. The three resulting classes have 239, 869 and 606 velocity trajectories, respectively.   Plots of randomly selected subsamples of trajectories from each groups are provided in Figure \ref{fig:subgroup_samples}.

\begin{figure}
    \centering
    \begin{subfigure}{\textwidth}
        \centering
        \includegraphics[scale=0.3]{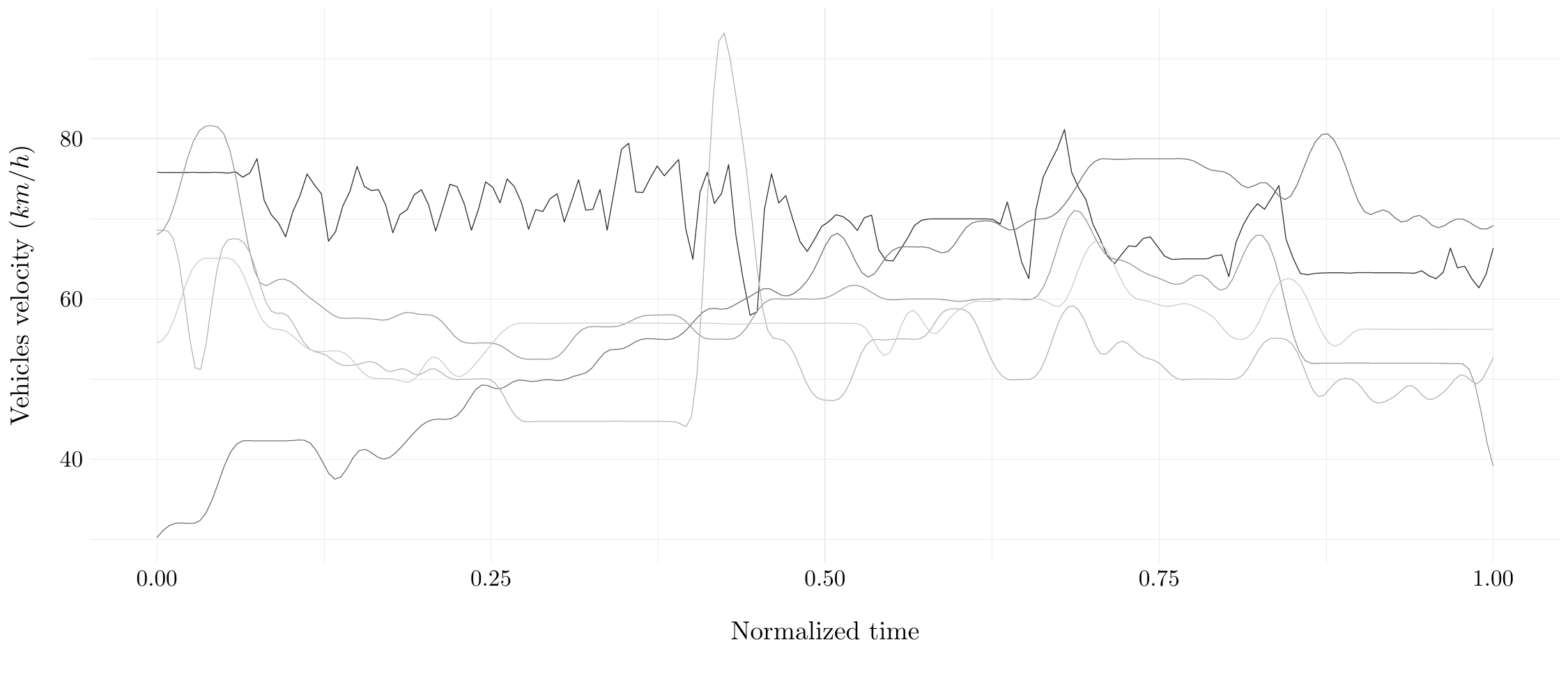}
        \caption{Fluid/high-speed traffic}
    \end{subfigure}
      \begin{subfigure}{\textwidth}
        \centering
        \includegraphics[scale=0.3]{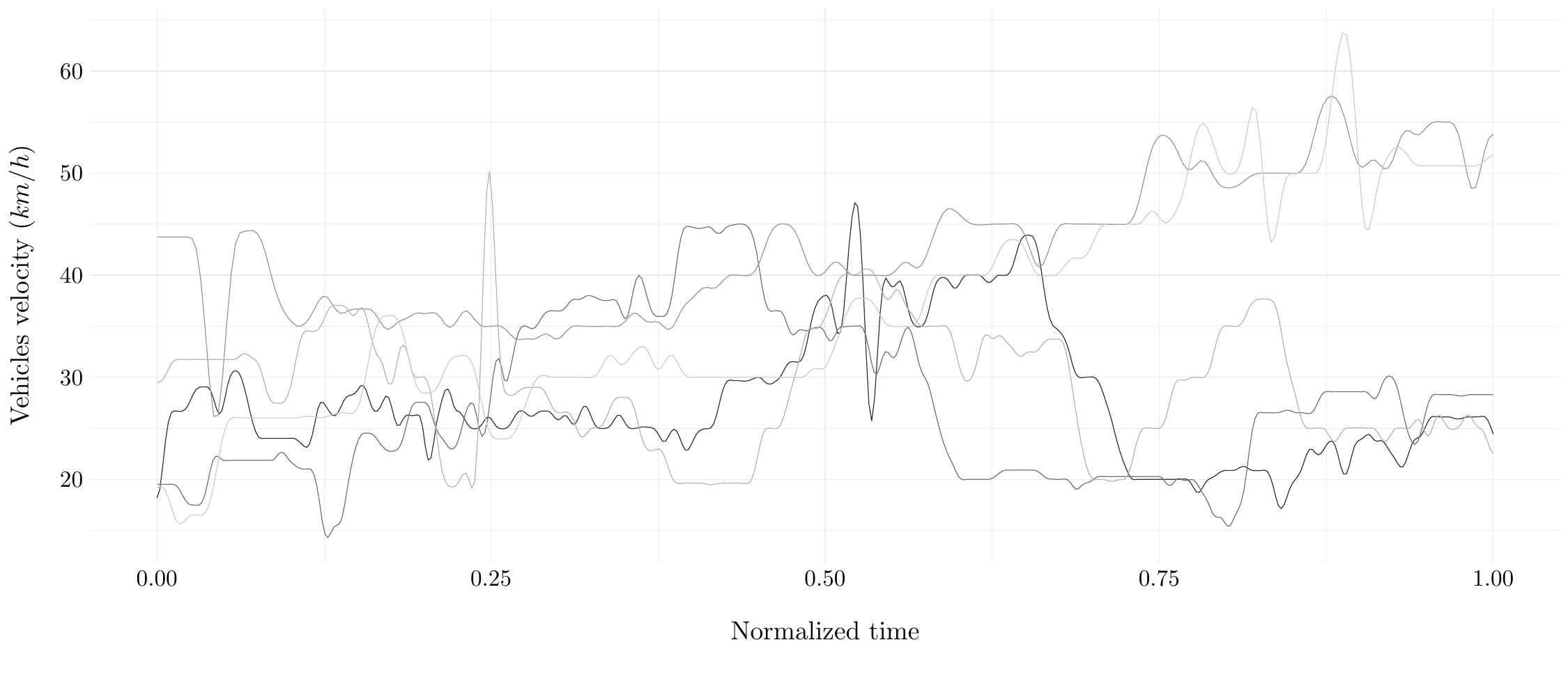}
        \caption{In-between traffic}
    \end{subfigure}    
  \begin{subfigure}{\textwidth}
        \centering
        \includegraphics[scale=0.3]{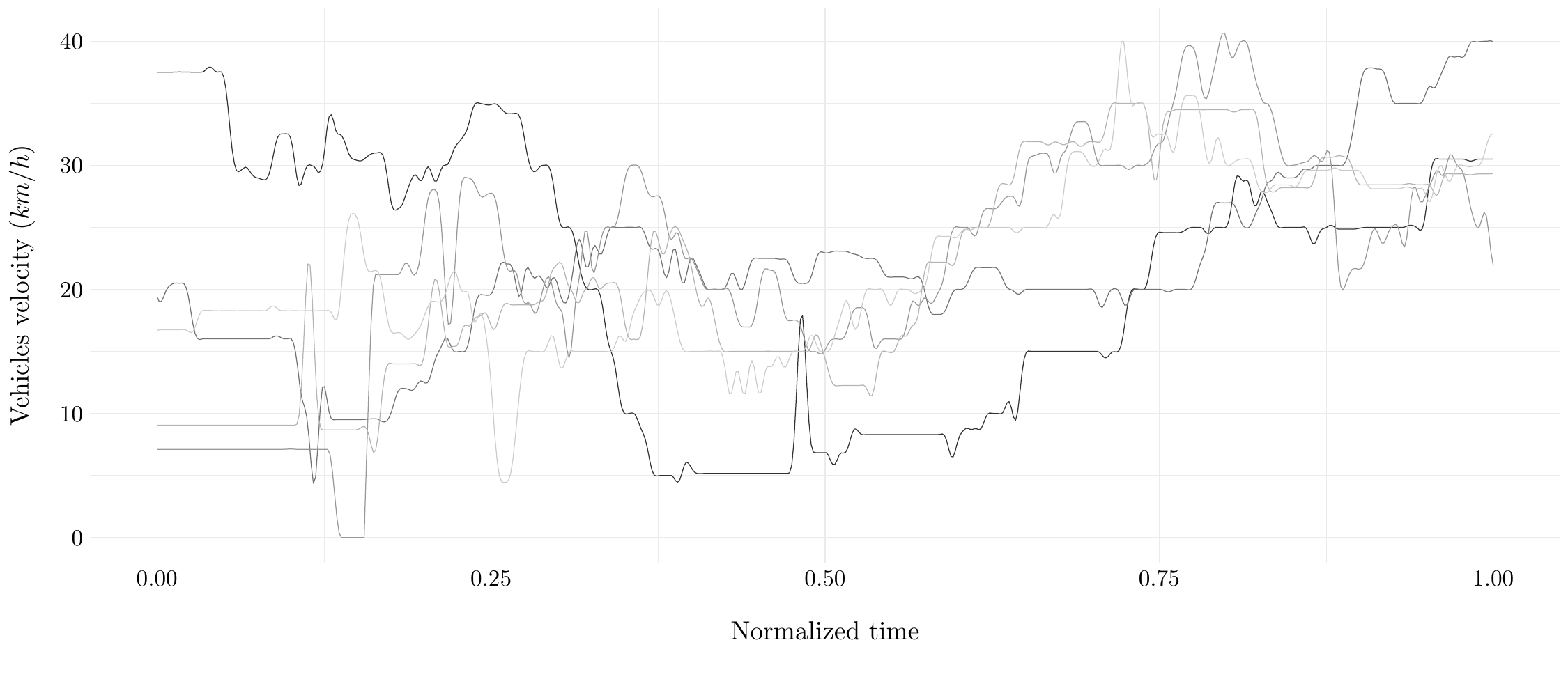}
        \caption{Dense/low-speed traffic}
    \end{subfigure}
    
    \caption{I-80 dataset illustration of the clusters: a sample of five velocity curves from each of the three groups of curves}
    \label{fig:subgroup_samples}
\end{figure}

\section{Complements on the real-data applications} \label{supp:real_data}

In this section, we point out the fact that our situation is not specific only to the traffic flow data, but can be applied to other real datasets.

\subsection{Canadian weather}

The Canadian Weather dataset \cite{ramsay_applied_2002,ramsay_functional_2005} records the daily temperature and precipitations in Canada averaged over the period from 1960 to 1994. Here, we are interested in the average daily temperature for each day of the year. It contains the measurements of $35$ canadian stations. Here, we have $\N{0} = 35$ and $\mu = 365$. A sample of five temperature curves has been plotted in the Figure \ref{fig:temp_sample}.  Figure \ref{fig:temp_H} presents the estimation of $\HT$ for different $\T$. We see that the estimation varies around $1$ with $\widehat{K}_0 = 25$.

\begin{figure}
    \centering
    \begin{subfigure}{.45\textwidth}
        \centering
        \includegraphics[scale=0.2]{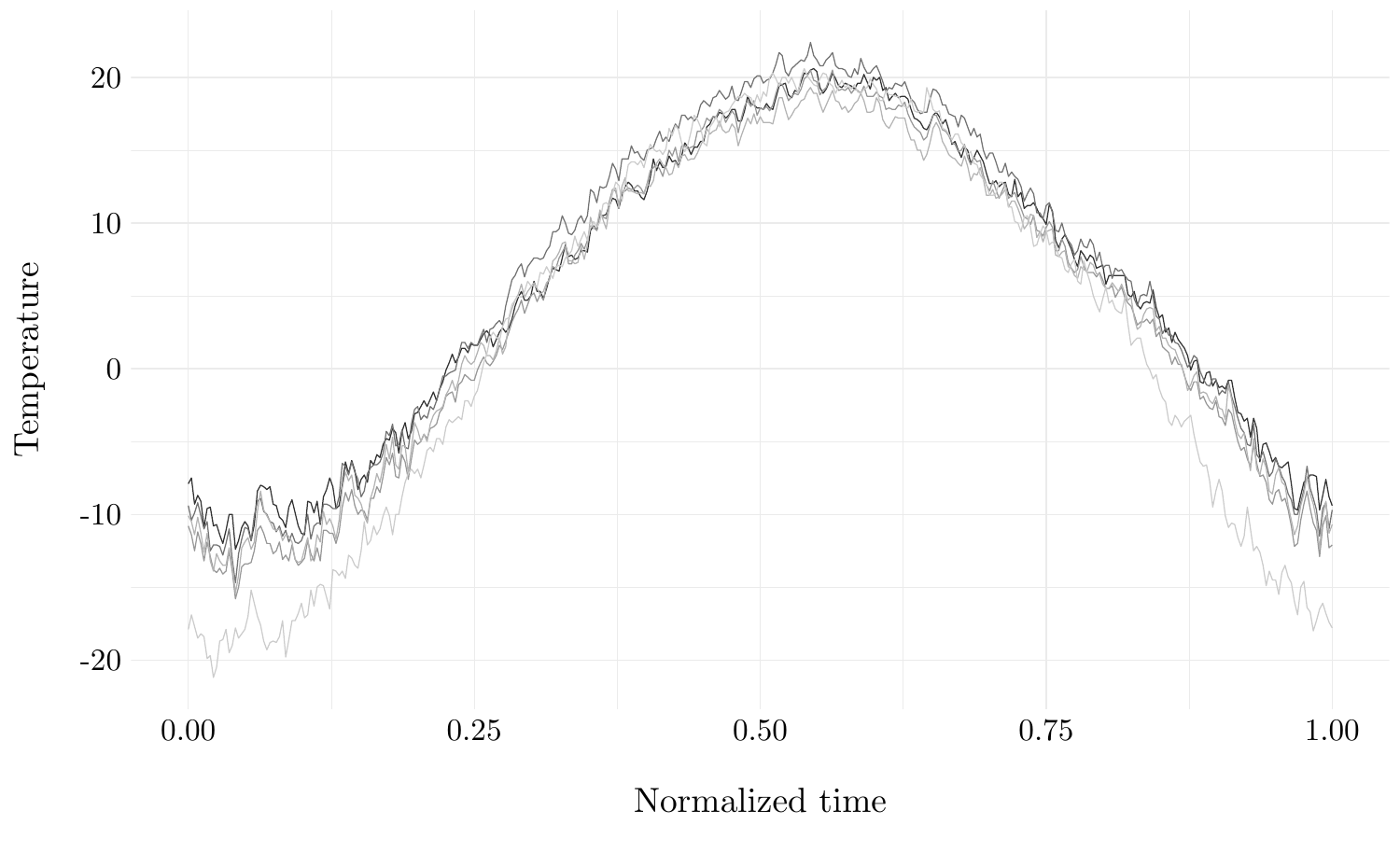}
        \caption{A sample of five temperature curves.}
        \label{fig:temp_sample}
    \end{subfigure}
    \begin{subfigure}{.45\textwidth}
        \centering
        \includegraphics[scale=0.2]{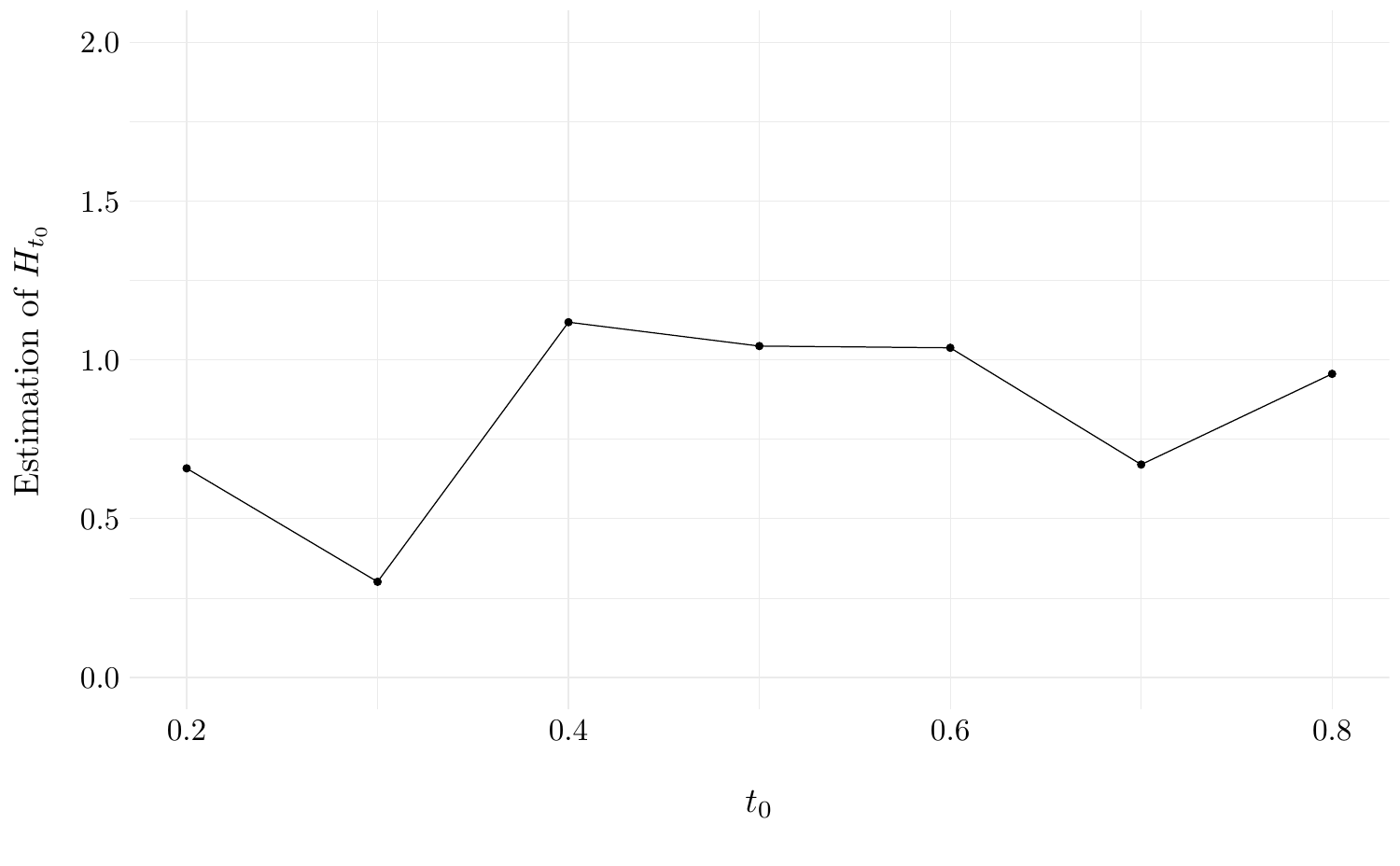}
        \caption{Estimation of $\HT$}
        \label{fig:temp_H}
    \end{subfigure}
    
    \caption{Canadian weather dataset illustration.}
    \label{fig:temp}
\end{figure}

\subsection{Household Active Power Consumption}

The Household Active Power Consumption dataset is part of the Monash University, UEA, UCR time series regression archive \citep{tan_monash_2020} and was sourced from the UCI repository\footnote{https://archive.ics.uci.edu/ml/datasets/Individual+household+electric+power+consumption}. The data measures diverse energy related features of a house located in Sceaux, near Paris every minute between December 2006 and November 2010. In total, its represents around $2$ million data points. These data are used to predict the daily power consumption of a house. Here, we are only interested in the daily voltage. The dataset contains $\N{0} = 746$ time series of $\mu = 1440$ measurements.
Figure \ref{fig:power_sample} presents a sample of five curves from this dataset. The estimation of the local regularity $\HT$, plotted in Figure \ref{fig:power_H}, is around $0.5$ with $\widehat{K}_0 = 73$.

\begin{figure}
    \centering
    \begin{subfigure}{.45\textwidth}
        \centering
        \includegraphics[scale=0.2]{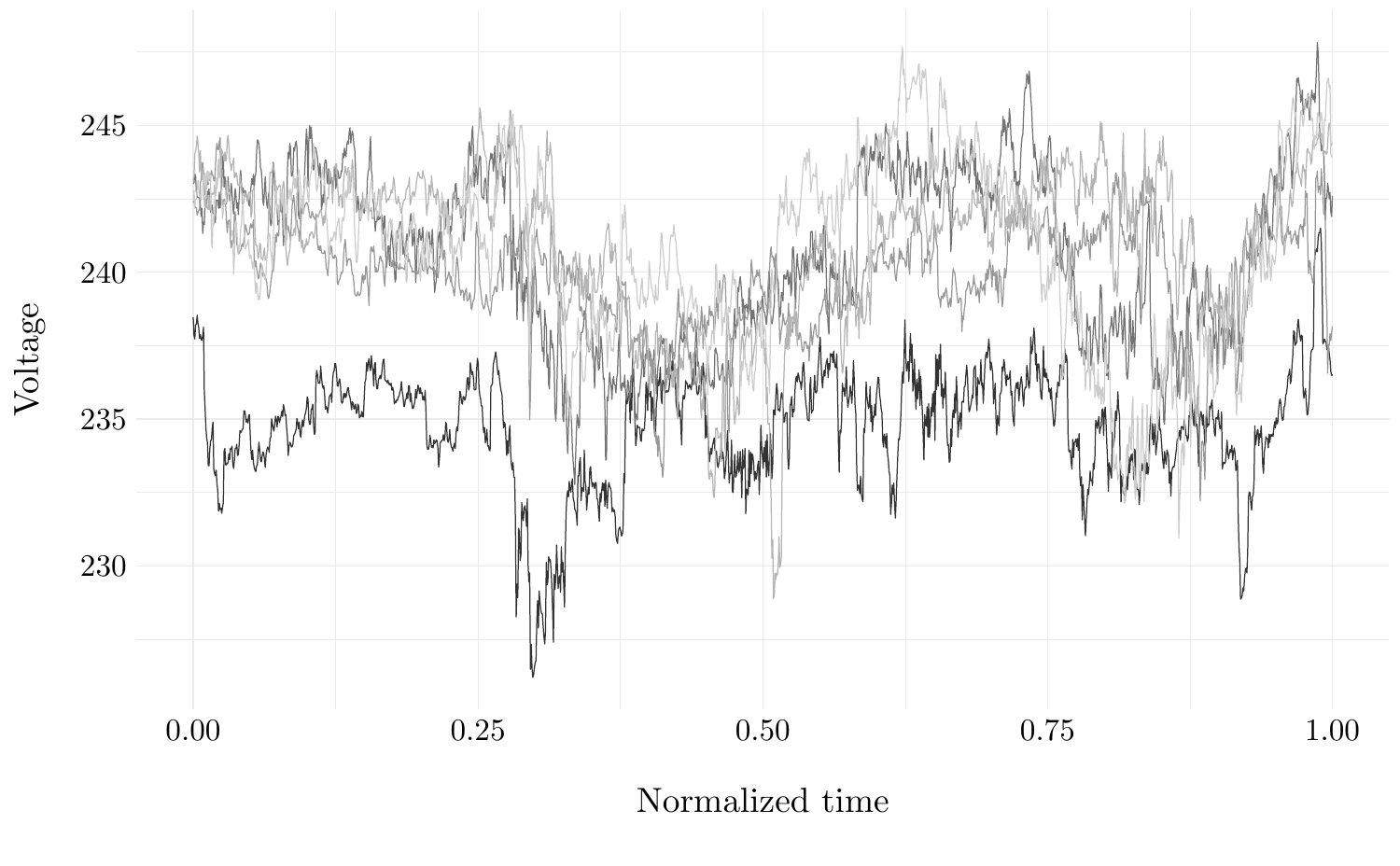}
        \caption{A sample of five power curves.}
        \label{fig:power_sample}
    \end{subfigure}
    \begin{subfigure}{.45\textwidth}
        \centering
        \includegraphics[scale=0.2]{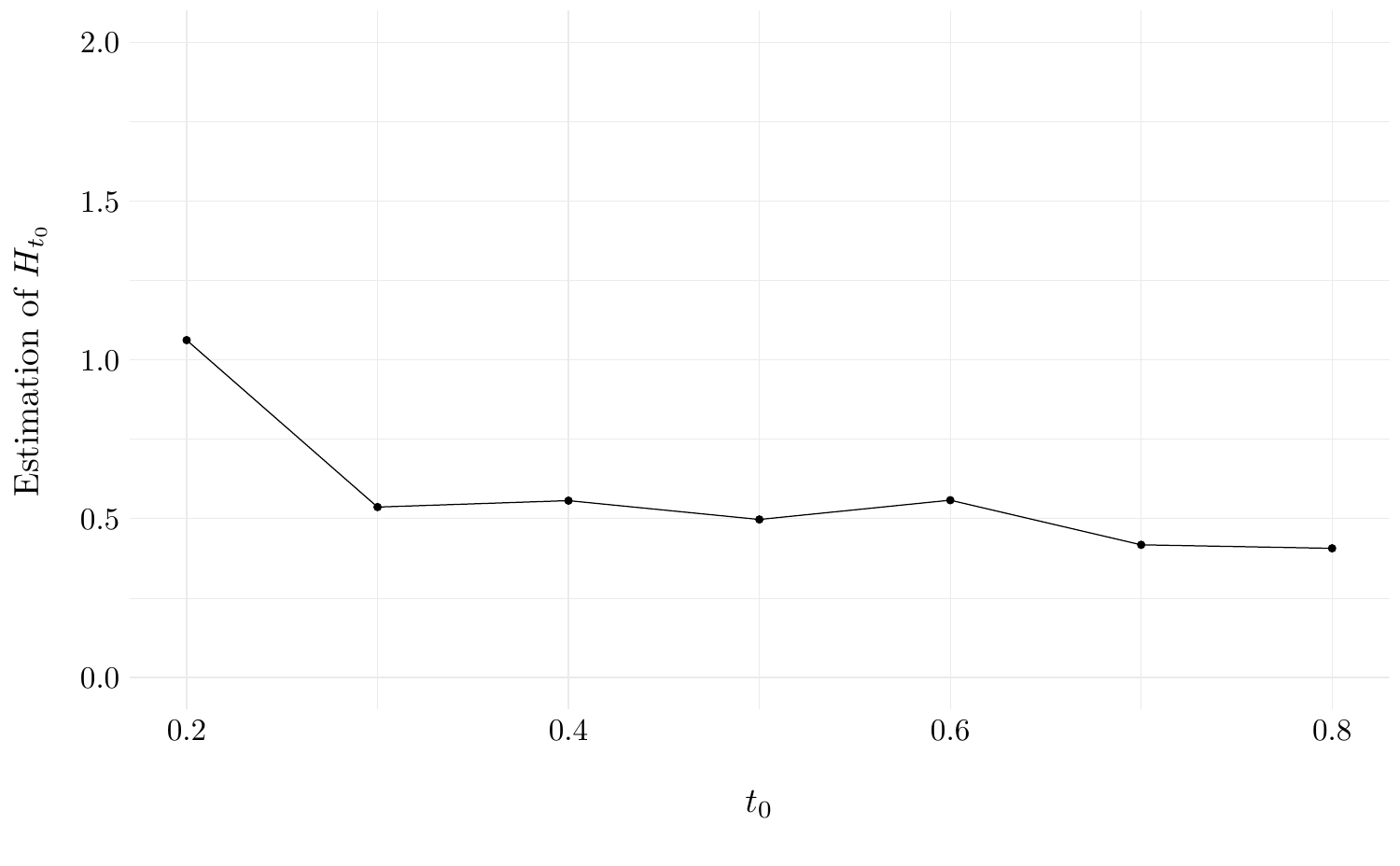}
        \caption{Estimation of $\HT$}
        \label{fig:power_H}
    \end{subfigure}
    
    \caption{Household active power consumption dataset illustration.}
    \label{fig:power}
\end{figure}

\subsection{PPG-Dalia}

The PPG-Dalia dataset is also part of the Monash University, UEA, UCR time series regression archive \citep{tan_monash_2020} and was also sourced from the UCI repository\footnote{https://archive.ics.uci.edu/ml/datasets/PPG-DaLiA}. PPG sensors are widely used in smart wearable devices to measure heart rate \cite{reiss_deep_2019}. They contain a single channel PPG and 3D accelerometer motion data recorded from $15$ subjects performing various real-life activities. Measurements from each subject are segmented into $8$ second windows with $6$ second overlaps, resulting in $\N{0} = 65000$ time series of $\mu = 512$ features. Here, we are interested in the PPG channel. A sample of five curves is plotted in Figure \ref{fig:ppg_sample}. The estimation of the local regularity $\HT$ is also around $0.5$ (see Figure \ref{fig:ppg_H}) with $\widehat{K}_0 = 25$.

\begin{figure}
    \centering
    \begin{subfigure}{.45\textwidth}
        \centering
        \includegraphics[scale=0.2]{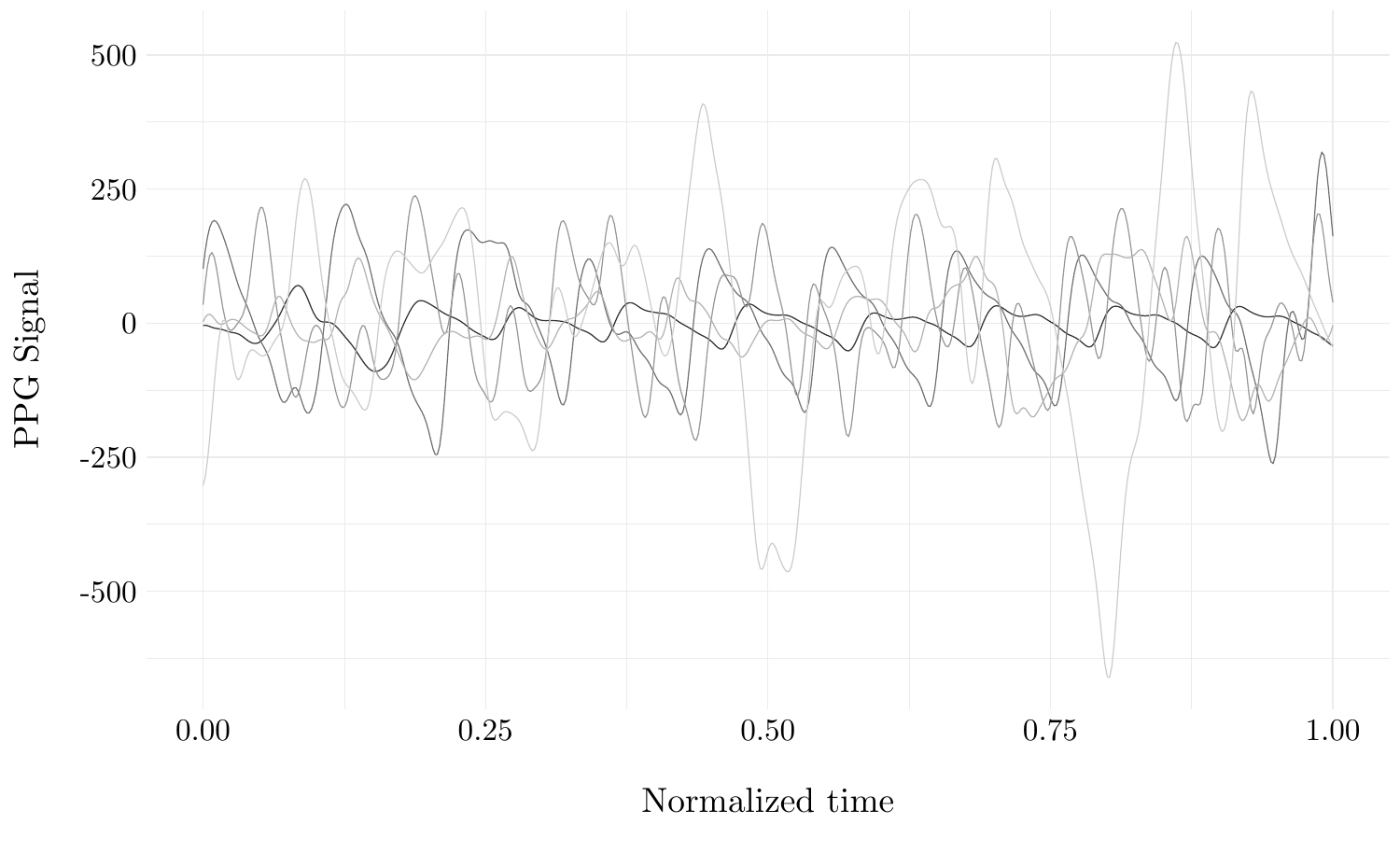}
        \caption{A sample of five PPG curves.}
        \label{fig:ppg_sample}
    \end{subfigure}
    \begin{subfigure}{.45\textwidth}
        \centering
        \includegraphics[scale=0.2]{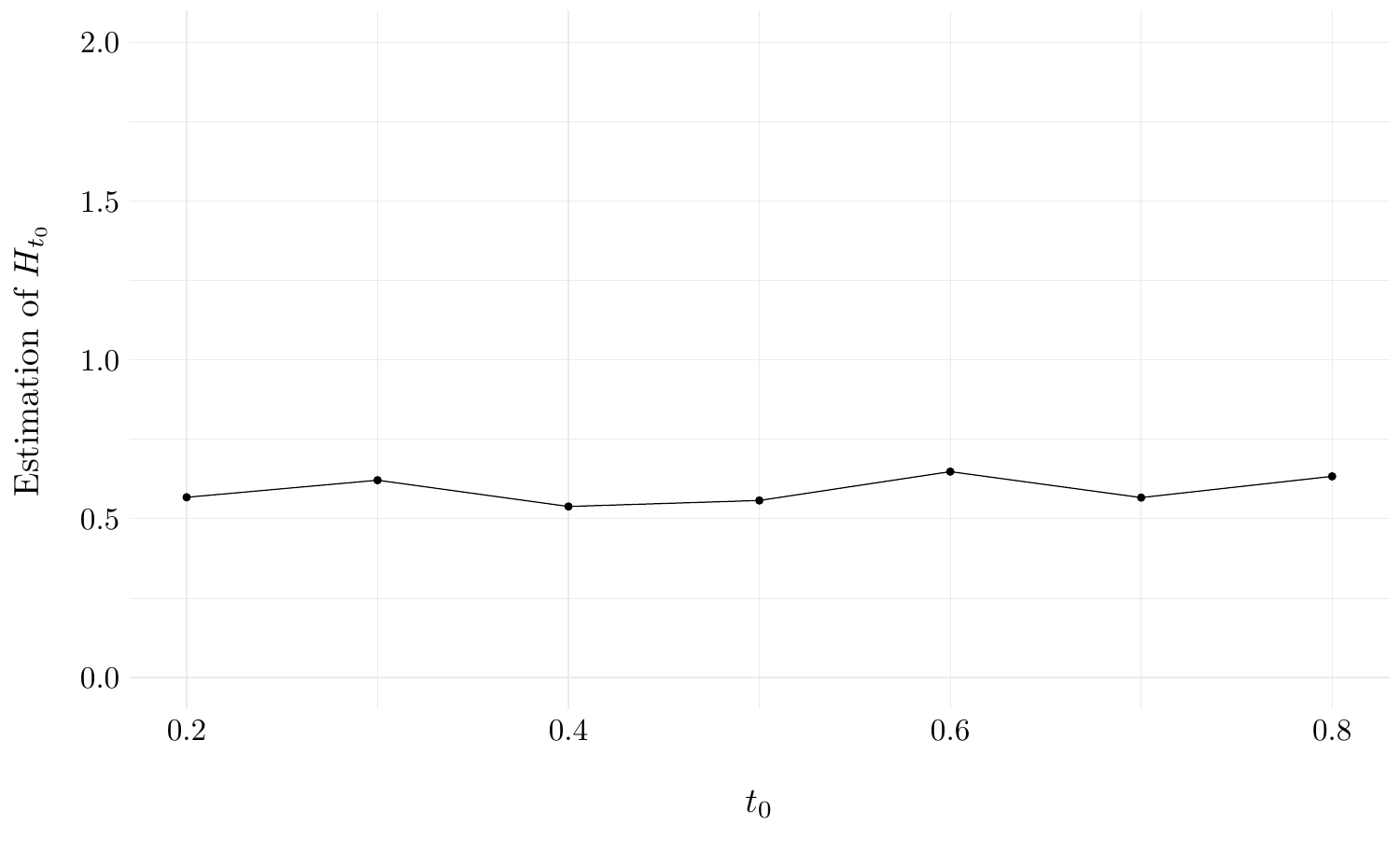}
        \caption{Estimation of $\HT$}
        \label{fig:ppg_H}
    \end{subfigure}
    
    \caption{PPG-Dalia dataset illustration.}
    \label{fig:ppg}
\end{figure}

\end{appendix}

\section*{Acknowledgements}
The authors  thank Groupe Renault and the ANRT (French National Association for Research and Technology) for their financial support via the
CIFRE convention no.~2017/1116. Valentin Patilea gratefully acknowledges support from the Joint Research Initiative “Models and mathematical processing of very large data” under the aegis of Risk Foundation, in partnership with MEDIAMETRIE and GENES, France. We thank the Associate Editor and an anonymous
reviewer for their careful reading and constructive comments, which helped us to improve the manuscript.

\bibliographystyle{abbrv}
\bibliography{ref_short_rev}

\end{document}